\documentclass[11pt]{article}
\usepackage[english]{babel}
\usepackage{url}
\usepackage{microtype}
\usepackage{authblk}
\usepackage{comment}

\usepackage{abstract}
\usepackage{amsmath,amssymb}
\usepackage{mathtools}
\usepackage{graphicx}
\usepackage{caption}
\usepackage{subfig}
\usepackage{floatrow}
\usepackage{rotating}
\newfloatcommand{capbtabbox}{table}[][\FBwidth]
\usepackage{amsthm}
\usepackage{bm}
\usepackage{epstopdf}
\usepackage{float}
\usepackage{xcolor,colortbl}
\definecolor{MyBlueUrl}{rgb}{0.12,0.22,1}
\definecolor{MyredLink}{rgb}{0.9,0.11,0.17}
\definecolor{greenTcite}{rgb}{0.25,0.65,0.38}
\usepackage{enumitem}
\usepackage{hyperref}
\hypersetup{colorlinks=true,
linkcolor=MyredLink,
citecolor=greenTcite,
urlcolor=MyBlueUrl}
\setenumerate[0]{label=\roman*)}
\setenumerate[1]{label=\arabic*.}
\setenumerate[2]{label=(\alph*)}
\usepackage[verbose=true]{geometry}
\AtBeginDocument{
  \newgeometry{
    textheight=9in,
    textwidth=7.4in,
    top=1in,
    headheight=14pt,
    headsep=25pt,
    footskip=30pt
  }
}
\newcommand{\headeright}{}
\usepackage{fancyhdr}
\fancyheadoffset{0pt}
\newcommand{\R}{\mathbb{R}}
\newcommand{\C}{\mathbb{C}}

\def\xb{{\bm {x}}}

\def\wb{{\bm {w}}}
\def\alphab{{\bm {\alpha}}}

\def\Fb{{\bm {F}}}

\newtheorem{theorem}{Theorem}
\newtheorem{proposition}[theorem]{Proposition}

\newtheorem{definition}{Definition}[section]
\newtheorem{remark}[definition]{Remark}

\title{Stability and Bifurcation Analysis of Nonlinear PDEs\\ via Random Projection-based PINNs:\\ A Krylov-Arnoldi Approach}
\newcommand{\shorttitle}{Stability and Bifurcation Analysis of Nonlinear PDEs via PI-RPNNs
}
\newcommand{\myand}{$\cdot \text{ }$}
\author[1,2,*]{Gianluca Fabiani}
\author[3]{Michail E. Kavousanakis}
\author[4]{Constantinos Siettos}
\author[2,5,*]{Ioannis G.  Kevrekidis}

\affil[1]{\raggedright Hopkins Extreme Materials Institute, \emph{Johns Hopkins University}, Baltimore, 21218, MD, USA}
\affil[2]{Dept. of Chemical and Biomolecular Engineering, \emph{Johns Hopkins University}, Baltimore, 21218, MD, USA}
\affil[3]{School of Chemical Engineering, \emph{National Technical University of Athens}, 9 Iroon Polytechniou Street, Athens, 15772, Greece}
\affil[4]{Dipartimento di Matematica e Applicazioni ‘‘Renato Caccioppoli", \emph{Universit\`a degli Studi di Napoli} \emph{Federico II}, Naples, Italy}
\affil[5]{Dept. of Applied Mathematics and Statistics, \emph{Johns Hopkins University}, Baltimore, 21218, MD, USA}
\affil[*]{Corresponding authors, email: \texttt{gfabian2@jh.edu,yannisk@jhu.edu}}

\date{\vspace{-5ex}}

\begin{document}
\maketitle
\begin{abstract}
We address a numerical framework for the stability and bifurcation analysis of nonlinear partial differential equations (PDEs) in which the solutions are sought in the function space spanned by physics-informed random projection neural networks (PI-RPNNs), and discretized via a collocation approach. These are shallow, single-hidden-layer networks with randomly sampled and fixed \emph{a priori} hidden-layer weights; only the linear output layer weights are optimized, reducing training to a single least-squares solve.
This linear output structure enables the direct and explicit formulation of the eigenvalue problem governing the linear stability of stationary solutions. This takes a generalized eigenvalue form, which naturally separates the physical domain interior dynamics from the algebraic constraints imposed by boundary conditions, at no additional training cost and without requiring additional PDE solves.
However, the random projection collocation matrix is inherently numerically rank-deficient -- its singular values decay exponentially for analytic activation functions -- rendering naive eigenvalue computation unreliable and contaminating the true eigenvalue spectrum with spurious near-zero modes.
To overcome this limitation, we introduce a matrix-free shift-invert Krylov-Arnoldi method that operates directly in weight space, avoiding explicit inversion of the numerically rank-deficient collocation matrix and enabling the reliable computation of several leading eigenpairs of the physical Jacobian -- the discretized Fréchet derivative of the PDE operator with respect to the solution field, whose eigenvalue spectrum determines linear stability.
We further prove that the PI-RPNN-based generalized eigenvalue problem is almost surely regular, guaranteeing solvability with standard eigensolvers, and that the singular values of the random projection collocation matrix decay exponentially for analytic activation functions, rigorously explaining the observed ill-conditioning. The framework is validated on canonical benchmark problems -- the Liouville-Bratu-Gelfand, FitzHugh-Nagumo, and Allen-Cahn PDEs -- accurately detecting saddle-node, Hopf, and pitchfork bifurcations and recovering the corresponding eigenfunctions across parameter ranges.
\end{abstract}
\hspace{0.5cm} {\footnotesize
\textbf{Keywords:} Nonlinear partial differential equations (PDEs)
\myand Stability analysis
\myand Numerical bifurcation analysis
\myand Physics-informed random projection neural networks
\myand PINNs
\myand Shift-invert Krylov-Arnoldi's method
\myand Scientific Machine Learning (SciML)
\myand Numerical Analysis
}
-----
{\small
\textbf{MSC--} 65N25
\myand 	65F18 
\myand 	65F22 
\myand 	65L07  	
}
\vskip 0.05in
\hrule height 1pt
\vskip 0.1in

\section{Introduction}
%
Physics-informed neural networks (PINNs)~\cite{raissi2019physics, karniadakis2021physics, lu2021deepxde}, first coined in Raissi et al. (2019)~\cite{raissi2019physics}, have emerged as a popular and promising approach for the numerical solutions of partial differential equations (PDEs), combining the flexibility of neural networks with the ability to enforce physical constraints.
The interest in using Artificial Neural Networks (ANNs) for differential equations can be traced back to the 1990s~\cite{meade1994numerical, dissanayake1994neural, lagaris1998artificial, gerstberger1997feedforward, gonzalez1998identification}, with the foundational work of Lagaris et al. (1998)~\cite{lagaris1998artificial} systematically exploring ANNs for linear and nonlinear DEs, including initial and boundary value problems.
In recent years, supported by the sharp growth in available computational power, PINNs have attracted increasing attention~\cite{samaniego2020energy, kavousanakis2025flow, zou2025learning, kiyani2025optimizer, wang2025turbulence}, with improvements in optimization strategies, training techniques, and architectural designs enabling them to tackle more challenging PDE problems, including for example Differential Algebraic Equations (DAEs)~\cite{kavousanakis2025flow} and 3D turbulence simulations~\cite{wang2025turbulence}.
PINNs have shown particular promise for high-dimensional PDEs~\cite{wei2018machine, han2018solving, hu2025scorepinn, georgiou2025heatnets, deryck2025approximation}, where the ANN scalability allows them to partially overcome the ``curse of dimensionality", and have been successfully applied to dynamical systems tasks~\cite{alvarez2023discrete, alvarez2024nonlinear, patsatzis2024slow, kalia2021learning, bertalan2019learning} and operator-learning~\cite{li2024pinnoperator, wang2021learning, goswami2023physics}.
Despite their success in forward problems, the application of PINNs to stability and bifurcation analysis has been relatively unexplored. Only a few works have investigated this~\cite{fabiani2021numerical, galaris2022numerical, shahab2025pinnlattices,  shahab2025corrigendum}, with~\cite{fabiani2021numerical} being the first to explicitly explore bifurcations in a PINN framework, coupling PINN with arc-length continuation approaches.

This underexploration stems from the difficulty of computing multiple PDE solutions across parameter values, which often entails high computational cost and the use of techniques such as transferlearning~\cite{liu2023adaptive} or deep ensembling~\cite{zou2025learning} for multi-parameter or multi-stability settings.
As with any numerical solver, a new solution must be computed for each parameter instance; however, PINNs are particularly costly in this regard, since each retraining relies on gradient-based optimization, which 
is prone to slow convergence and loss plateaus.
Moreover, even when initialized close to a known solution, the stochastic nature of the optimizer does not guarantee that the learned weights vary smoothly with the parameter, making it difficult to exploit continuity of the solution branch in weight space.
While multiple solutions may still be obtained with significant effort, reliably computing several leading eigenvalues, required for system-level stability analysis, remains challenging and limits systematic bifurcation studies.
Stability and bifurcation analysis of nonlinear PDEs is central to understanding how variations in system parameters can modify the eigenvalues of what we will call the \emph{``physical Jacobian"} -- the Fréchet derivative of the PDE operator with respect to the solution field -- potentially leading to instabilities, bifurcations, or pattern formation. 

Training ANNs, and particularly PINNs with complex loss landscapes, can be challenging and potentially NP-hard~\cite{blum1988training, froese2023training}, with further difficulties arising from negative experimental results on convergence and overfitting~\cite{almira2021negative, adcock2021gap}, as well as the need for careful tuning of optimizers and training strategies~\cite{kiyani2025optimizer, karumuri2024efficient, wang2025turbulence}.
These challenges become more pronounced when targeting bifurcation analysis, where multiple solutions and eigenvalue spectra must be accurately captured, and where training procedures that succeed at one parameter value may fail at others.

Despite the rapid progress of PINNs, the development of systematic and robust machine learning methodologies for solving large-scale and stiff nonlinear PDEs remains an open problem. This motivates the study of alternative architectures that can reduce computational cost while retaining accuracy. One prominent direction, which we explore in this paper, is given by Random Projection Neural Networks (RPNNs) \cite{fabiani2025random, fabiani2023parsimonious, fabiani2021numerical}. In this architecture, the hidden-layer weights and biases are randomly sampled and kept fixed, rendering the hidden layer a nonlinear random feature map. Consequently, the only trainable parameters are the output layer weights, whose optimization reduces to a linear least-squares problem -- a convex formulation that admits a unique global minimizer obtainable in closed form (e.g., via a single (regularized) matrix pseudo-inversion).

RPNNs belong to a broader family of randomized or fixed-structure neural networks, including \emph{Random Weight Neural Networks} (RWNNs)~\cite{schmidt1992feed}, \emph{Random Vector Functional Link networks} (RVFL)~\cite{pao1994learning}, \emph{reservoir computing}~\cite{jaeger2001echo}, \emph{Extreme Learning Machines} (ELMs)~\cite{huang2006extreme} and \emph{Random Fourier Features} Networks~\cite{rahimi2007random}. Recently also addressed as \emph{Sampled Neural Networks}~\cite{bolager2024sampling}, and \emph{Gradient-Free Neural Networks}~\cite{bolager2024gradient}.
%
%
%
The use of randomized architectures is further supported by theoretical results on universal approximation with random bases~\cite{barron1993universal, igelnik1995stochastic, rahimi2008uniform, deryck2025approximation, fabiani2025random, fabiani2025randonets, fabiani2024stability}.

Physics-informed versions of these models (PI-RPNNs\footnote{often also equivalently named PI-ELMs.}) have been developed for a wide range of problems~\cite{dwivedi2020physics, fabiani2021numerical, calabro2021extreme, dong2021local, schiassi2021extreme, dong2022computing, yan2022framework, fabiani2023parsimonious, sun2024local, osorio2025physics, li2025fourier}. 
For stationary nonlinear PDEs, Fabiani et al. (2021)~\cite{fabiani2021numerical} introduced a PI-RPNNs framework trained via a regularized Gauss--Newton method, demonstrating competitive accuracy and computational efficiency for the solution of nonlinear PDEs with respect to finite-difference and finite-element schemes.
PI-RPNNs have also been applied to stiff ODEs and DAEs with time-adaptation schemes~\cite{fabiani2023parsimonious}, and~\cite{fabiani2024stability} -- the first study on the numerical linear stability of PINN-based schemes for stiff ODEs -- showed that appropriate hyperparameter selection can ensure A-stability of PI-RPNNs.
More recent work extended these methods to high-dimensional PDEs~\cite{georgiou2025heatnets, deryck2025approximation}, inverse PDE problems~\cite{galaris2022numerical, fabiani2024task, ahmadi2024ai} and operator learning through randomized operator networks (RandONets)~\cite{fabiani2025randonets, fabiani2025enabling, fabiani2025equation}, as well as to POD-DeepONets and Fourier Neural Operators, both constructed via random sampling in~\cite{bolager2024sampling}.
The low training cost and explicit linear output structure of PI-RPNNs make them particularly well suited for multi-parameter studies such as stability and bifurcation analysis, where the solution must be tracked across parameter ranges and eigenvalue computations must be performed repeatedly.
Crucially, the linear output structure of PI-RPNNs — where the PDE solution is approximated as a linear combination of fixed, nonlinear basis functions — enables the direct and explicit formulation of the eigenvalue problem governing the linear stability of stationary solutions. The Jacobian matrices required for this stability analysis are already available from the Gauss–Newton training of the PI-RPNN; they need only be reused rather than recomputed. 

However, care must be taken: the weight-space Jacobian obtained from training does not itself represent the linearized PDE operator, as its eigenvalues reflect the optimization landscape rather than the physical dynamics. Instead, the physical Jacobian ($J_u$) is related to its weight-space counterpart ($J_w$) through the basis collocation matrix ($\Psi$) via the chain rule ($J_w=J_u \Psi$). Note that, the physical Jacobian $J_u \in \mathbb{R}^{M \times M}$ is square, whereas $J_w \in \mathbb{R}^{M \times N}$ is in general rectangular, as the number of neurons $N$ may be larger or smaller than the number of collocation points $M$.
Extracting the former from the latter would require (pseudo)-inverting this basis collocation matrix, a numerically dangerous operation given its severe ill-conditioning\footnote{Random projection bases are typically non-orthogonal, and their associated collocation matrix is rank-deficient. In particular, the singular values are empirically observed to decay rapidly.  We prove that, for large $N$ and analytic activation functions, this decay is exponential (Proposition~\ref{prop:Psi_decay_asymptotic}), resulting in marked numerical ill-conditioning even with parsimonious sampling.}.
To circumvent this difficulty, in this work we develop a matrix-free shift-invert Krylov–Arnoldi method that operates directly in the weight space, completely avoiding explicit inversion. Furthermore, to properly separate physical dynamics from algebraic constraints, we formulate the problem as a generalized eigenvalue system with an appropriate mass matrix that isolates the effect of boundary conditions. This ensures that only eigenvalues associated with the physical interior dynamics are retained, while spurious modes linked to boundary constraints are excluded.
This approach, building on Fabiani et al. (2021)~\cite{fabiani2021numerical}, enables systematic computation of multiple leading eigenvalues for stability and bifurcation analysis of nonlinear PDEs, overcoming limitations of previous approaches~\cite{fabiani2021numerical, shahab2025pinnlattices, shahab2025corrigendum} that required separate PDE solves for each eigenvalue.

To support the reliability of this approach, we prove two complementary theoretical results. First, we show that the generalized eigenvalue problem in the PI-RPNN basis is almost surely regular, ensuring that standard eigensolvers such as those implemented in LAPACK or ARPACK libraries can be applied with high probability. Second, we demonstrate that for analytic activation functions, the singular values of the collocation matrix decay exponentially. This explains the severe rank-deficiency that propagates to the approximated physical Jacobian, producing a cluster of spurious near-zero eigenvalues while leaving the physically relevant part of the true eigenvalue spectrum intact.

For comparison with previous studies, our earlier work~\cite{fabiani2021numerical} addressed only bifurcation of steady-state solutions, without the associated stability or eigenvalue computations. More recently,~\cite{shahab2025corrigendum, shahab2025pinnlattices} computed eigenvalues but required a separate PDE solve for each eigenpair, limiting scalability. In contrast, here we show that once a stationary solution is obtained via PI-RPNN training, the relevant Jacobian matrices and the generalized eigenvalue formulation needed for stability analysis follow directly from the Gauss–Newton iterations, at no additional training cost. Concurrently, the Eig-PIELM approach~\cite{mishra2026eig} has proposed a similar generalized eigenvalue problem, but its scope is limited to linear PDEs and does not address the numerical conditioning challenges that arise in bifurcation regimes.

\par
The remainder of the paper is organized as follows. 
Section~\ref{sec:RPNN} introduces the PI-RPNN architecture, the construction of the random bases, and the approximation results underpinning the approach. 
Section~\ref{sec:methods} presents the methodological framework: the PDE problem setting in Section~\ref{sec:Prob_setting}, the PI-RPNN residual formulation and the numerical solution strategy, together with the continuation procedure for bifurcation tracking in Section~\ref{sec:PI-RPNN_solution}. 
Section~\ref{sec:PI-RPNN_stability} develops the stability analysis based on linearized operators, including the generalized eigenvalue problem in the weight space and the matrix-free Arnoldi shift--invert strategy used to mitigate the ill-conditioning introduced by random projections. 
Section~\ref{sec:numerical_results} presents numerical experiments for the Liouville-Bratu-Gelfand problem, the FitzHugh--Nagumo system, and the Allen--Cahn equation. 
Conclusions are given in Section~\ref{sec:conclusion}.

\section{Preliminaries on Random Projection Neural Networks}
\label{sec:RPNN}
Random Projection Neural Networks (RPNNs) are a class of artificial neural networks (ANN) that use fixed, randomly generated hidden-layer weights. Conceptually, these methods implement a random feature mapping of the input, which can capture essential structure and distances in the data. Studies have shown that appropriately constructed nonlinear random projections/features can provide efficient and expressive representations for a wide range of learning tasks~\cite{barron1993universal, igelnik1995stochastic, ito1996nonlinearity, rahimi2007random, rahimi2008uniform, fabiani2025random, fabiani2025randonets, bolager2024sampling, ahmadi2024ai, osorio2025physics}.

Let us consider, without loss of generality~\cite{igelnik1995stochastic, fabiani2025random}, a single output, single hidden layer feed-forward neural network with $N$ neurons, denoted by a function $f_N:\R^d \rightarrow\R$ with an \emph{a priori} fixed matrix of \emph{internal weights} $A \in \R^{N\times d}$ with $N$ rows $\alphab_j \in \R^{1\times d}$ and \emph{biases} $\beta=(\beta_1,\dots,\beta_N) \in \R^{N}$:
\begin{equation}
f_N(\xb;\bm{w},\beta^o,A,\beta)=\sum_{j=1}^N w_j \psi (\alphab_j\cdot \xb+ \beta_j) +\beta^{o}=\sum_{j=1}^N w_j \psi_j (\xb)+\beta^{o}
\label{eq:RPNN}
\end{equation}
where $N$ is the number of neurons (nodes), $d$ is the dimension of the input $\xb \in \R^{d\times 1}$, $\beta^{o} \in \R$ is a scalar constant offset, or the so-called \emph{output bias}, $\psi:\R\rightarrow\R$ is the so-called \emph{activation} (transfer) function, that for fixed parameters $\alpha_j$ and $\beta_j$ we denote as a fixed \emph{basis function} $\psi_j$, and $\bm{w}=(w_1,\dots,w_N)^{T} \in\R^{N \times 1}$ are the \emph{external (readout) weights} that connect hidden layer and output layer. In RPNNs, the weights $\bm{w}$ and the offset bias $\beta^o$ are the only trainable parameters of the network.

When approximating a sufficiently smooth function $f:\Omega \subseteq \R\rightarrow\R$, for which we can evaluate $n+1$ points of its graph $(\xb_0,y_0),(\xb_1,y_1),(\xb_2,y_2),\dots,(\xb_n,y_n)$ such that $y_i=f(\xb_i)$, the training of an RPNN reduce to the solution of a linear interpolation system of $n+1$ algebraic equation with $N+1$ unknowns $\bm{\tilde{w}}=(\beta^o,\bm{w})$:
\begin{equation}
    \Psi\cdot\bm{w}+\beta^o=\bm{y}, \qquad \Psi_{ij}=\psi_j(\xb_i)
    \label{eq:RPNN_solve}
\end{equation}
where $\bm{y}=(y_0,y_1,\dots,y_n)$ is the vector containing the desired outputs, and the random collocation matrix $\Psi$ has elements $\Psi_{ij}$. The output bias term $\beta^o$ can be handled in several ways: it may be set to zero, included as an additional unknown in the least-squares problem, or prescribed \emph{a priori}, for instance as the mean of the output data. For the sake of simplicity of notation, we set $\beta^o = 0$ throughout this work. \par

\subsection{Theoretical foundations of RPNNs}
The effectiveness of Random Projection Neural Networks (RPNNs) is grounded in a synthesis of classical results from random projections~\cite{johnson1984extensions}, random feature methods~\cite{rahimi2008uniform}, and randomized neural networks~\cite{igelnik1995stochastic, fabiani2025random}. At a fundamental level, RPNNs extend the concept of linear random projections, as formalized by the Johnson--Lindenstrauss (JL) Lemma, which asserts the existence of a low-dimensional isometric embedding
\begin{equation}
\label{eq:JL_randomProj}
\Fb(\xb) = \frac{1}{\sqrt{k}} R \xb, \quad R_{ij} \sim \mathcal{N}(0,1),
\end{equation}
that approximately preserves Euclidean distances among a set of $n$ points in $\mathbb{R}^d$. This establishes that random linear mappings can robustly encode the geometric structure of high-dimensional data, providing a first justification for fixing internal weights in RPNNs.

RPNNs generalize this principle to nonlinear random projections, where the hidden layer acts as a ``lifting operator" mapping
\begin{equation}
    \xb \mapsto \Psi(\bm{x})\equiv\bigl[\psi(\alphab_1 \cdot \xb + \beta_1), \dots, \psi(\alphab_N \cdot \xb + \beta_N)\bigr].
\end{equation}

Such nonlinear random embeddings can preserve not only Euclidean distances but also more general kernel distances with low distortion, providing a richer representation of the input geometry.
As a concrete example, Rahimi and Recht~\cite{rahimi2007random} formalized this in the context of Random Fourier Features, using \emph{sine} and \emph{cosine} as transfer functions.

More generally, for other activation functions, it can be shown that the function class spanned by such nonlinear features is dense in an appropriate Reproducing Kernel Hilbert Space (RKHS)~\cite{rahimi2008uniform}.
In practical implementations with finite $N$, it is important that the sampled basis functions remain linearly independent to ensure (theoretically) full row/column rank of the collocation matrix, although the collocation matrix may still be numerically rank-deficient in practice. Following Ito~\cite{ito1996nonlinearity}, a sufficient condition for linear independence of $\{\psi(\alpha_j x + \beta_j)\}_{j=1}^N$ is that $\psi$ is a slowly increasing non-polynomial plane wave with open Fourier support, and that $(\alpha_j,\beta_j) \neq \pm (\alpha_{j'},\beta_{j'})$ for all $j \neq j'$. 
When $\alphab$s and $\beta$s are drawn randomly from continuous distribution, the above is true with probability 1~\cite{huang2006extreme, fabiani2025random}.
This ensures that the collocation matrix in the least-squares problem has full column rank, providing a well-posed system for solving the output weights.

Moreover, by construction the RPNN functional space spanned by a finite set of randomized and fixed basis functions is a vector space of dimension $N+1$, and if $\psi \in C^\nu(\mathbb{R})$, it is a subspace of $C^\nu([a,b])$. Equipped with the $L^p$ norm, it becomes a Banach space, allowing the definition of the RPNN of best approximation,
whose existence and uniqueness are guaranteed for $1<p<\infty$~\cite{fabiani2025random}.
For a one-dimensional domain $[a,b]$, it has been shown~\cite{fabiani2025random} that the RPNN of best $L^p$ approximation with infinitely differentiable non-polynomial activation functions converges exponentially fast with $N$ when approximating smooth target functions. In higher dimensions ($d>1$), networks with randomly assigned hidden parameters can approximate sufficiently regular functions with mean-square error decaying as $O(1/N)$~\cite{igelnik1995stochastic}.


\section{Methods}
\label{sec:methods}
We begin by specifying the PDE models and the corresponding eigenvalue problems used for stability analysis. We then outline the PI-RPNN formulation for the numerical solution of stationary and time-dependent PDEs, including the construction of the residuals and the treatment of boundary conditions. Finally, we detail the stability procedure based on the linearized operators, the associated generalized eigenproblems, and the matrix-free shift–invert approach.

\subsection{Problem setting}
\label{sec:Prob_setting}
Let $\Omega\subset\mathbb{R}^d$ be a bounded domain with boundary $\partial\Omega$.  
We consider the stationary formulation of a parametrized nonlinear partial differential equation (PDE)
\begin{equation}
\label{eq:PDE-strong}
\mathcal{N}[u;\mu](\mathbf{x}) = f(\mathbf{x}), \qquad \mathbf{x}\in\Omega,
\end{equation}
subject to boundary conditions (BCs)
\begin{equation}
\label{eq:BCs}
\mathcal{B}[u](\mathbf{x}) = g(\mathbf{x}), 
\qquad \mathbf{x}\in\partial\Omega.
\end{equation}
A solution $u^{\ast}=u^{\ast}(\mathbf{x};\mu)$ satisfies the residual equations \(\mathcal{F}(u^{\ast};\mu)=0,\) where $\mathcal{F}$ collects both the differential and boundary operators given by Eqs.~\eqref{eq:PDE-strong}-\eqref{eq:BCs}:
\begin{equation}
\mathcal{F}(u^{\ast};\mu) := \big(\mathcal{N}[u^{\ast};\mu]-f; \mathcal{B}[u^{\ast}]-g\big) = \bm{0}.
\label{eq:residual_PDE}
\end{equation}
The goal is twofold: (i) compute the steady-state solution $u^*$ for given $\mu$, and (ii) determine its stability by analyzing the true eigenvalue spectrum of the associated linearized operator.
For a stationary state $u^{\ast}$, the linearization of the PDE around $u^{\ast}$ defines the operator
\begin{equation}
\mathcal{L}_u: V \to V^*, \qquad \mathcal{L}_u \phi := \frac{\delta \mathcal{N}}{\delta u}[u^{\ast};\mu] \phi,
\end{equation}
where $V$ is the space of admissible perturbations satisfying the boundary conditions.  
In other words, $\phi \in V$ encodes only variations consistent with the BCs, and $\mathcal{L}_u$ acts on this constrained space.  
In standard numerical analysis, $V$ could be a Sobolev space with incorporated boundary conditions (e.g., $H_0^1(\Omega)$). In this work, $V$ is the span of the PI-RPNN basis functions approximating $u^\ast$.

Formally, let $(\lambda,\phi)$ denote an eigenpair of $\mathcal{L}_u$, i.e., $\mathcal{L}_u \phi = \lambda \phi$.  
The eigenvalue spectrum $\sigma(\mathcal{L}_u)$ determines the linear stability of the stationary solution $u^\ast$: the state is linearly stable if all eigenvalues satisfy $\Re(\lambda)<0$, whereas the presence of any eigenvalue with $\Re(\lambda)>0$ indicates linear instability.  
Eigenvalues crossing the imaginary axis as a function of the parameter $\mu$ correspond to local bifurcations of the stationary solution, and tracking these changes identifies the bifurcation points.

\subsection{PDE Solution and continuation with PI-RPNNs}
\label{sec:PI-RPNN_solution}
In this section, we outline the framework already introduced in~\cite{fabiani2021numerical}, for numerically solving and performing bifurcation analysis of nonlinear PDEs using PI-RPNNs, drawing on standard concepts and techniques from numerical continuation (e.g., see~\cite{chan1982arclength}).

We employ PI-RPNNs to approximate the stationary solution $u^*$ of the nonlinear PDE~\eqref{eq:PDE-strong}-\eqref{eq:BCs}. Let $\tilde{u}(\xb;\bm{w})$ denote the RPNN representation of $u$, with degrees of freedom $\bm{w} \in \mathbb{R}^N$. The residuals are enforced at a set of collocation points
$\{\bm{x}_k\}_{k=1}^M \subset \overline{\Omega}$, which can be chosen either on a structured grid or randomly, resulting in the system of algebraic equations \cite{fabiani2021numerical, fabiani2023parsimonious}:

\begin{equation}
F_k(w_1,\dots,w_N;\mu)
=\mathcal{F}\!\left(\tilde{u}(\bm{x}_k;\bm{w});\mu\right)=0,
\quad k=1,\dots,M,
\label{eq:residuals}
\end{equation}
where $\mu$ denotes a continuation or bifurcation parameter and
$\tilde{u}(\bm{x};\bm{w})$ is the PI-RPNN approximation of the physical solution $u$.
The residual operator $\mathcal{F}$ defined in~\eqref{eq:residual_PDE} is thus evaluated on the neural-network ansatz and becomes an explicit function of the weight vector $\bm{w}$.
Note that the collocation points
$\{\bm{x}_k\}_{k=1}^M \subset \overline{\Omega}$, may lie either in the interior of the domain or on the boundary. Accordingly,
\begin{equation}
F_k(\bm{w};\mu)=
\begin{cases}
\mathcal{N}[\tilde{u};\mu](\bm{x}_k)-f(\bm{x}_k),
& \bm{x}_k \in \Omega, \\
\mathcal{B}[\tilde{u}](\bm{x}_k)-g(\bm{x}_k),
& \bm{x}_k \in \partial\Omega,
\end{cases}
\end{equation}
so that interior collocation points enforce the PDE residual, while boundary collocation points enforce the boundary conditions.

\noindent
The nonlinear system~\eqref{eq:residuals} is solved iteratively, e.g., via Newton's method, by linearizing around the current iterate $\bm{w}^{(n)}$:
\begin{equation}
\nabla_{\bm{w}} \Fb(\bm{w}^{(n)},\mu) \, d\bm{w}^{(n)} = -\Fb(\bm{w}^{(n)},\mu), \quad 
\bm{w}^{(n+1)} = \bm{w}^{(n)} + d\bm{w}^{(n)},
\label{eq:Newton_iter}
\end{equation}
where $\Fb=[F_1,F_2,\dots,F_M]^\top$ and $\nabla_{\bm{w}} \Fb \in \R^{M\times N}$ is the Jacobian matrix
\begin{equation}
\left[\nabla_{\bm{w}} \Fb \right]_{kj} = \frac{\partial F_k}{\partial w_j}\Big|_{(\bm{w}^{(n)},\mu)}, \quad k=1,\dots,M,\ j=1,\dots,N.
\end{equation}

\noindent
For rectangular and/or rank-deficient systems ($M \neq N$), one may employ a truncated SVD to compute a Moore–Penrose pseudo-inverse:
\begin{equation}
\nabla_{\bm{w}} \Fb = U \Sigma V^\top, \quad d\bm{w}^{(n)} = - V \Sigma^{-1} U^\top \Fb(\bm{w}^{(n)},\mu),
\end{equation}
ensuring numerical stability of the update step.

\noindent
Branches of solutions past critical points, where $\nabla_{\bm{w}} \Fb$ becomes singular, can be traced using numerical continuation techniques. In particular, the pseudo arc-length continuation method~\cite{chan1982arclength} parametrizes both $\tilde{u}(\bm{w})$ and $\mu$ by the arc-length $s$, yielding the augmented system \cite{fabiani2021numerical, galaris2022numerical}:
\begin{equation}
\begin{bmatrix}
\nabla_{\bm{w}} \Fb & \nabla_{\mu} \Fb \\
\nabla_{\bm{w}} \mathcal{C} & \nabla_{\mu} \mathcal{C}
\end{bmatrix}
\begin{bmatrix}
d\bm{w}^{(n)}(s) \\ d\mu^{(n)}(s)
\end{bmatrix}
= -
\begin{bmatrix}
\Fb(\bm{w}^{(n)}(s),\mu(s)) \\
\mathcal{C}(\tilde{u}(\bm{w}^{(n)};s),\mu^{(n)}(s))
\end{bmatrix},
\label{eq:augmented_arc}
\end{equation}
where
\begin{equation}
\nabla_{\mu }\Fb=\begin{bmatrix}
\frac{\partial F_1}{\partial \mu} & \frac{\partial F_2}{\partial \mu} & \dots & \frac{\partial F_M}{\partial \mu}
\end{bmatrix}^\top.
\end{equation}
$\mathcal{C}(\cdot)$ is a chosen condition to enforce the pseudo arc-length constraint based on previous solutions $(\tilde{u}_{-1},\mu_{-1})$ and $(\tilde{u}_{-2},\mu_{-2})$, and $ds$ is the arc-length step. For example:
\begin{equation}
\begin{split}
\mathcal{C}(\tilde{u}(\bm{w}^{(n)}),\mu^{(n)}(s))=  (\tilde{u}(\bm{w}^{(n)})-\tilde{u}(\bm{w})_{-1})^\top\cdot \frac{(\tilde{u}(\bm{w})_{-1}-\tilde{u}(\bm{w})_{-2})}{ds}+\\+ (\mu^{(n)}-\mu_{-1}) \cdot \frac{(\mu_{-1}-\mu_{-2})}{ds}-ds,
\end{split}
\end{equation}
is one of the choices for the so-called ``pseudo arc-length condition" (for more details see~\cite{fabiani2021numerical, galaris2022numerical}), where the subscripts $(-1)$ and $(-2)$ denote the two previously converged solutions along the continuation branch, corresponding to arc-length parameters $s_{-1}$ and $s_{-2}$, respectively.

\subsection{Stability Analysis with PI-RPNNs}
\label{sec:PI-RPNN_stability}
We present a detailed formulation for computing the dominant eigenvalue spectrum of the PDE linearization when the steady state is represented by a PI-RPNN.

Let $\tilde u(\xb;\wb)$ be the PI-RPNN approximation of the stationary solution, obtained with Gauss-Newton iterations as in Eq.~\eqref{eq:Newton_iter} and let us denote its corresponding \emph{weight-space} Jacobian by
\(
J_w := \nabla_{\wb}\Fb(\tilde{u}(\wb),\mu) \in \R^{M\times N}.
\)

Our goal is to obtain the leading eigenpairs of the \emph{physical} Jacobian 
\(
J_u := \nabla_u \Fb(u(\wb),\mu) \in \R^{M \times M}
\)
acting on perturbations of the field,
\begin{equation}
J_u \phi = \lambda \, B \phi, \qquad \phi \in \R^M,
\label{eq:gen_eigRPNN}
\end{equation}
where $J_u$ is the (discretized) physical Jacobian mapping perturbations in the sampled/physical space to residual changes, and $B$ is an $M \times M$ matrix used to enforce algebraic constraints, e.g., boundary conditions.

Sometimes it is convenient to view Eq.~\eqref{eq:gen_eigRPNN} as a \emph{matrix pencil} 
\begin{equation}
J_u - \lambda B,
\end{equation}
where $\lambda$ is a free spectral parameter. The generalized eigenvalues and eigenvectors of the pair $(J_u, B)$ are equivalently the eigenvalues and eigenvectors of this pencil.

If the determinant of $J_u - \lambda B$ is identically zero for all $\lambda$, the pencil is called \emph{singular}; otherwise, it is \emph{regular}. 

A common and simple discrete construction of $B$ is as a diagonal matrix with entries
\begin{equation}
B_{ii} = \begin{cases}
0, & \text{if } \xb_i \in \partial\Omega \text{ (boundary point)},\\
1, & \text{if } \xb_i \in \Omega \text{ (interior point)}.
\end{cases}
\label{eq:B_def}
\end{equation}
This explicitly separates the algebraic constraints from the dynamical eigenproblem.

Numerically, one can solve the generalized eigenproblem in Eq.~\eqref{eq:gen_eigRPNN}  
using a QZ generalized Schur decomposition. This computes pairs $(s_{ii}, t_{ii})$ such that
\begin{equation}
    J_u=QSZ^\top, \qquad \text{and} \qquad B=QTZ^\top,
\label{eq:QZ_Schur}
\end{equation}
where $Q$ and $Z$ are unitary, $S$ and $T$ are upper triangular matrices, and $s_{ii}$ and $t_{ii}$ are diagonal elements of $S$ and $T$, respectively.
Then, the generalized eigenvalues are recovered as
\begin{equation}
\lambda_i = \frac{s_{ii}}{t_{ii}}.
\label{eq:gen_eigs_lambda_i}
\end{equation}
If $t_{ii} = 0$, the corresponding generalized eigenvalue is formally $\lambda_i = \infty$. 
Physically relevant eigenpairs correspond to the subspace where $t_{ii} \neq 0$, ensuring that only admissible perturbations are considered.

Now, since $F$ depends on $u$, which in turn depends on $\wb$, the Jacobian follows the chain rule
\begin{equation}
\label{eq:chainrule_jac}
J_w = \nabla_{\wb} \Fb = (\nabla_{u} \Fb)(\nabla_{\wb}u) = J_u\, \Psi,
\end{equation}
where $\Psi \in \R^{M\times N}$ is the matrix of hidden--layer features evaluated at the collocation points, i.e.\ $(\Psi)_{ij}=\psi_j(\xb_i)$.

Having access to the weight-space Jacobian corresponding to the stationary solution $\tilde{u}(\mathbf{w})$, a natural first approach is to recover the physical Jacobian directly via
\begin{equation}
J_u = J_w \Psi^{\dagger},
\label{eq:naive}
\end{equation}
where $\Psi^{\dagger}$ is the (possibly truncated) pseudo-inverse of the collocation matrix. While this strategy can, in principle, yield approximations of the leading eigenpairs (as will be seen in Fig.~\ref{fig:bratu_illcond}(b)), it suffers from several numerical pitfalls in practice.
We will refer to this as the \emph{naive} (or direct) approach, to distinguish it from the \emph{shift-invert Arnoldi} method introduced below.
The exponential decay of singular values of $\Psi$ (Proposition~\ref{prop:Psi_decay_asymptotic})renders it numerically rank-deficient. The truncation of near-zero singular values, necessary for the numerical stabilization of the pseudo-inverse, introduces exact rank-deficiency that propagates into the computed physical Jacobian $J_u = J_w\Psi^\dagger$, generating a cluster of spurious near-zero eigenvalues in its spectrum. The resulting eigenvalues become highly sensitive to the truncation tolerance used in $\Psi^\dagger$, undermining reliability for systematic parameter studies.

%

More precisely, since $\mathrm{rank}(AB) \le \min(\mathrm{rank}(A),\mathrm{rank}(B))$, the product $J_u = J_w \Psi^{\dagger}$ inherits the rank of the truncated $\Psi^{\dagger}$.  
Introducing a truncation tolerance $\tau$ (e.g., $10^{-8}$), we define $r < \min(M,N)$ as the number of singular values of $\Psi$ exceeding $\tau$. Then, $\Psi^{\dagger}$, and consequently $J_u$, have at most rank $r$.
This effective rank deficiency manifests as a cluster of $M-r$ spurious near‑zero eigenvalues in the spectrum of $J_u$, which we will illustrate later in Sec.~\ref{sec:numerical_results} (e.g., see Fig.~\ref{fig:bratu_illcond}(c)).

To summarize, the eigenvalue spectrum of the generalized eigenproblem in Eq.~\eqref{eq:gen_eigRPNN} can be naturally divided into three distinct groups:
\begin{enumerate}
    \item Eigenvectors in the nullspace of $B$ correspond to boundary perturbations, yielding formally infinite eigenvalues $\lambda = \infty$; these directions do not affect the interior domain dynamics.
    \item The rank-deficiency of the collocation matrix $\Psi$ and the resulting physical Jacobian $J_u = J_w \Psi^{\dagger}$ generate a cluster of zero eigenvalues, associated with directions outside the range of $\Psi^{\dagger}$; these correspond to numerically spurious modes and do not contribute to stability.
    \item The remaining finite eigenvalues lie in the subspace of admissible interior perturbations, and these eigenvalues determine the true "physical" linear stability of the stationary solution.
\end{enumerate}

Please note that the problematic case $s_{ii}=t_{ii}=0$ for some $i$ in Eq.~\eqref{eq:gen_eigs_lambda_i}, which corresponds to a singular matrix pencil and an indeterminate generalized eigenvalue, can in principle arise only under exact algebraic degeneracies between the computed physical Jacobian $J_u\equiv J_w\Psi^{\dagger}$ and the constraint matrix $B$. In the present setting, such degeneracies are non-generic and occur with probability zero under continuous choices of collocation points and PI--RPNN parameters, as stated precisely in the following proposition.

\begin{proposition}[Almost sure regularity of the generalized pencil in the PI-RPNN basis]
\label{prop:regular_pencil}
Assume the following:
\begin{enumerate}
    \item The collocation points $\{\mathbf{x}_i\}_{i=1}^M$ and the parameters defining the PI--RPNN basis functions (resulting in the collocation matrix $\Psi\in\R^{M\times N}$) are drawn from continuous probability distributions.
    \item The number of neurons $N$ satisfies $N > M_{bc}$, where $M_{bc}$ is the number of boundary collocation points.
    \item The basis functions do not vanish simultaneously at any boundary collocation point.
    \item The boundary equations $\mathcal B[u]=g(x)$ are non-degenerate, i.e., the Jacobian $\nabla_u \mathcal{B}$ has full rank.
\end{enumerate}
Then the generalized eigenvalue problem
\begin{equation}
J_u \phi = J_w \Psi^{\dagger} \phi = \lambda B \phi,
\end{equation}
with $B$ defined as in Eq.~\eqref{eq:B_def}, defines a regular matrix pencil $(J_u,B)$ almost surely.
\end{proposition}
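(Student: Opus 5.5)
The plan is to show that the characteristic polynomial $p(\lambda)=\det(J_u-\lambda B)$ is not identically zero. We do this by isolating its highest-order coefficient in $\lambda$ and proving that this coefficient is almost surely nonzero.

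\emph{Step 1: reduce to a boundary block.} Permute rows and columns simultaneously so that the $M_{int}=M-M_{bc}$ interior collocation points come first. This permutation does not affect regularity. Then $B=\mathrm{diag}(I_{M_{int}},0_{M_{bc}})$. Expanding $\det(J_u-\lambda B)$ by multilinearity in the interior columns shows that $p$ has degree at most $M_{int}$. The coefficient of $\lambda^{M_{int}}$ is
\[
(-1)^{M_{int}}\det\bigl(J_u[\mathrm{bc},\mathrm{bc}]\bigr),
\]
that is, the determinant of the principal boundary block of $J_u$. It therefore suffices to prove that $J_u[\mathrm{bc},\mathrm{bc}]$ is almost surely nonsingular. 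In that case $p\not\equiv 0$, and the pencil has exactly $M_{int}$ finite eigenvalues, counted with multiplicity, plus $M_{bc}$ infinite ones, matching the classification given before the proposition.

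\emph{Step 2: identify the boundary rows.} By the chain rule \eqref{eq:chainrule_jac}, the boundary rows of $J_w$ are $(\nabla_u\mathcal B)\Psi$, so $J_u[\mathrm{bc},:]=(\nabla_u\mathcal B)\,\Psi\Psi^\dagger=(\nabla_u\mathcal B)\,P$. Here $P=\Psi\Psi^\dagger$ is the orthogonal projector onto $\mathrm{range}(\Psi)$.

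Consider first the model case in which $\nabla_u\mathcal B$ simply selects the boundary coordinates (Dirichlet data). Then $J_u[\mathrm{bc},\mathrm{bc}]=E_{\mathrm{bc}}^\top P E_{\mathrm{bc}}$. This block is positive semidefinite, and it is singular iff some nonzero $v$ supported on the boundary indices satisfies $Pv=0$. That condition is equivalent to $\Psi[\mathrm{bc},:]^\top v_{\mathrm{bc}}=0$. So the block is nonsingular iff $\Psi[\mathrm{bc},:]\in\R^{M_{bc}\times N}$ has full row rank $M_{bc}$.

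This full-rank property holds almost surely under Assumptions 1--3, and we argue it as follows.
\begin{itemize}
\item Assumption 2 gives $N>M_{bc}$, so full row rank is dimensionally possible.
\item The entries $\psi(\alphab_j\cdot\xb_i+\beta_j)$ are real-analytic in the random parameters.
\item Each $M_{bc}\times M_{bc}$ minor is therefore an analytic function of those parameters.
\item Such a minor is not identically zero: Ito's linear-independence result (quoted in Section~\ref{sec:RPNN}), together with Assumption 3, lets us exhibit one parameter configuration at which it is nonzero.
\item The zero set of a nonzero real-analytic function has Lebesgue measure zero, and the parameters are drawn from continuous distributions.
\end{itemize}

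\emph{Step 3: general boundary operators and the pseudo-inverse.} For Neumann or Robin conditions, $\nabla_u\mathcal B$ is a general full-rank $M_{bc}\times M$ matrix (Assumption 4). In that case we argue genericity of $\det\bigl((\nabla_u\mathcal B)P\,E_{\mathrm{bc}}\bigr)$ by the same analytic-function argument.

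One technical point must be handled here. The map $\Psi\mapsto\Psi^\dagger$ is real-analytic only on the set where $\mathrm{rank}\,\Psi$ is constant. By the same linear-independence argument, $\Psi$ almost surely has full rank $\min(M,N)$. We therefore restrict to this open, full-measure set, on which $P$ and hence the determinant are analytic. We then exhibit a single configuration at which the determinant is nonzero; a convenient choice is one in which $\mathrm{range}(\Psi)$ contains the columns of $(\nabla_u\mathcal B)^\top$.

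If the matrix of Assumption 4 is read literally as the local boundary-operator block rather than the full $M_{bc}\times M$ matrix, an extra genericity argument is needed. In that case the determinant of the leading coefficient is a polynomial in the entries of $P$ and of the local boundary block, and we would check that it is not identically zero.

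I expect this last point to be the main obstacle. It requires producing a nondegenerate witness configuration and controlling the analyticity of $\Psi^\dagger$; the reduction to the leading coefficient in Steps 1--2 is routine. If the witness is hard to construct, the first fallback is the Dirichlet case, which has already been settled cleanly, followed by a perturbation argument from it.
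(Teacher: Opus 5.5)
Your Steps 1--2 are correct, and the core computation matches the paper's. The paper shows $\Psi^{bc}(\Psi^\dagger)_{bc}=U_{bc}U_{bc}^\top$ is symmetric positive definite when $\Psi^{bc}$ has full row rank; this is your statement that $E_{\mathrm{bc}}^\top P E_{\mathrm{bc}}$ is nonsingular. The difference is the logical frame, and there your route is better.

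The paper argues by contradiction from the claim that the pencil is singular \emph{if and only if} some $\phi\neq 0$ satisfies $J_u\phi=0=B\phi$, and then rules out such a $\phi$. Only the ``if'' direction holds. For $J=\begin{pmatrix} a & b\\ 0 & 0\end{pmatrix}$, $B=\mathrm{diag}(1,0)$ and $b\neq 0$, one has $\det(J-\lambda B)\equiv 0$ although $\ker J\cap\ker B=\{0\}$. What the paper's computation actually establishes is that $J_u[\mathrm{bc},\mathrm{bc}]$ is invertible. Your observation that $(-1)^{M_{int}}\det J_u[\mathrm{bc},\mathrm{bc}]$ is the coefficient of $\lambda^{M_{int}}$ is the link that turns this into $\det(J_u-\lambda B)\not\equiv 0$. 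It also shows there are exactly $M_{int}$ finite eigenvalues.

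Your Step 3 does not work as proposed. The paper treats general boundary operators by writing $J_w^{bc}=\nabla_u\mathcal B\,\Psi^{bc}$. That is, it reads $\nabla_u\mathcal B$ as an $M_{bc}\times M_{bc}$ matrix acting on boundary values, invertible by Assumption 4. Under that reading your Step 2 closes at once: $J_u[\mathrm{bc},\mathrm{bc}]=\nabla_u\mathcal B\,E_{\mathrm{bc}}^\top P E_{\mathrm{bc}}$ is a product of two invertible matrices. No analyticity of $\Psi^\dagger$ and no witness are needed. So a genuinely local boundary operator is the easy case, not the one needing extra genericity.

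Under your $M_{bc}\times M$ full-row-rank reading, the proposed witness fails. If $\mathrm{range}(\Psi)$ contains the rows of $\nabla_u\mathcal B$, then $(\nabla_u\mathcal B)P=\nabla_u\mathcal B$. The determinant then reduces to $\det(\nabla_u\mathcal B\,E_{\mathrm{bc}})$, and full row rank of $\nabla_u\mathcal B$ does not make this nonzero.

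Worse, consider $M\le N$ (the regime of Proposition~\ref{prop:Psi_decay_asymptotic} and of the 2D Bratu run). By your own full-rank claim, $\Psi$ has full row rank almost surely, so $P=I_M$. Then $\det J_u[\mathrm{bc},\mathrm{bc}]=\det(\nabla_u\mathcal B\,E_{\mathrm{bc}})$ is deterministic, and no genericity argument over the random parameters can make it nonzero. The pencil might still be regular through lower-order coefficients, but your leading-coefficient reduction cannot detect that.

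The missing ingredient is the hypothesis that the boundary block of $\nabla_u\mathcal B$ is invertible. This is exactly what the paper assumes implicitly. With it, your Steps 1--2 already give a complete proof.
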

\begin{proof}[Sketch of the proof]
Singularity of $(J_u,B)$ implies existence of a nonzero $\phi \in \ker B \cap \ker J_u$, which would force the boundary collocation submatrix $\Psi^{bc}\in\R^{M_{bc}\times N}$ to be rank-deficient. By randomness of the collocation points and network parameters, $\Psi^{bc}$ has full row rank almost surely \cite{huang2006extreme,fabiani2025random}, leading to a contradiction. A complete proof is provided in~\ref{proof:prop_4}.
\end{proof}

%
The above proposition is stated to guarantee that the generalized eigenproblem is regular and thus solvable 
with standard algorithms, such as those implemented in LAPACK or ARPACK.

Next, we show that the numerically truncated pseudo-inversion of the collocation matrix $\Psi$ produces a \emph{cluster of near-zero eigenvalues} in $J_u$, corresponding to directions outside the range of $\Psi^{\dagger}$. Even though $\Psi$ is theoretically full-rank and invertible \cite{huang2006extreme,fabiani2025random}, its singular values decay very rapidly, especially for \emph{analytic activation functions} such as sigmoids, $\tanh$, or Gaussian RBFs. The rapid decay of singular values, predicted by classical results on compact integral operators \cite{little1984eigenvalues, castro2020super}, explains the numerical rank deficiency of the computed $J_u$ and the presence of spurious near-zero eigenmodes. 


\begin{proposition}[Asymptotic singular value decay of the collocation matrix]
\label{prop:Psi_decay_asymptotic}
Let $\Omega \subset \mathbb{R}^d$ be a bounded domain, and let
$\{\psi_j\}_{j=1}^N$ denote the PI--RPNN basis functions with internal
parameters $\bm{\theta}=(\bm\alpha,\bm\beta)$ drawn independently from a
continuous distribution.
Let $\bm{x}_1,\dots,\bm{x}_M$ be i.i.d.\ samples from a probability measure $\mu$
supported on $\Omega$.
Assume that the activation function $\psi$ is analytic on $\mathbb{R}$ and
satisfies $|\psi(x)|\le 1$ for all $x$.
Then, with high probability, the singular values
\(
\hat{\sigma}_1\ge\hat{\sigma}_2\ge\dots\ge\hat{\sigma}_M
\)
of the collocation matrix $\Psi\in\mathbb{R}^{M\times N}$, {($M\leq N$),} with entries
$\Psi_{ij}=\psi_j(\bm{x}_i)$,
satisfy the exponential decay estimate
\begin{equation}
\lim_{N \to \infty} \, \hat{\sigma}_j = O\!\left(R^{-j/2}\right), \qquad j = 1,2,\dots,M
\label{eq:asymptotic_decay}
\end{equation}
where $R>1$ depends only on the domain of analyticity of $\psi$.

Moreover, for finite $N>M^2$, with probability $1-\delta$, for any $R_2$ such that $R\ge R_2>1$, the estimate $\hat{\sigma}_j = O\!\left(R_2^{-j/2}\right)$ holds at least for the first $j=1,\dots,s$ singular values with
\begin{equation}
    s  \le \frac{\frac{1}{2}\log N -\log M}{\log R_2}+O\!\left(\log\log(M/\delta)\right).
    \label{eq:bound_index}
\end{equation}
\end{proposition}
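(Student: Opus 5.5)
The plan is to pass from the random matrix $\Psi$ to a compact integral operator whose eigenvalues decay exponentially, and then transfer that decay back to $\hat\sigma_j$ through concentration. The natural object is the random-feature kernel
\begin{equation}
k(\bm{x},\bm{y}) = \mathbb{E}_{\bm\theta}\bigl[\psi(\bm\alpha\cdot\bm{x}+\beta)\,\psi(\bm\alpha\cdot\bm{y}+\beta)\bigr].
\end{equation}
Since $\Psi\Psi^\top/N$ has entries $\frac1N\sum_{j=1}^N \psi_j(\bm{x}_i)\psi_j(\bm{x}_k)$, the strong law of large numbers (entries bounded by $1$ because $|\psi|\le 1$) gives $\Psi\Psi^\top/N \to K:=[k(\bm{x}_i,\bm{x}_k)]_{i,k=1}^M$ almost surely as $N\to\infty$. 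The singular values $\hat\sigma_j$ are then understood in this normalized sense, i.e.\ $\hat\sigma_j^2 = \lambda_j(\Psi\Psi^\top/N)$; this normalization is implicit in the statement, and I would make it explicit. Weyl's inequality then yields $\lim_N \hat\sigma_j^2 = \lambda_j(K)$. So the first claim reduces to showing $\lambda_j(K)=O(R^{-j})$, up to the factor $M$ coming from the sampling normalization.

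For the kernel eigenvalues I would use analyticity. Because $\psi$ is analytic on $\mathbb{R}$, it extends holomorphically to a strip, and $k(\cdot,\bm{y})$ is analytic in $\bm{x}$ on a complex neighborhood of the bounded $\Omega$, uniformly in $\bm{y}$. This step needs a bound on the extension over the strip; if $|\psi|\le 1$ only on the real line, I would add a mild integrability or truncation assumption on the distribution of $\bm\alpha$ so that $\mathbb{E}|\psi(\bm\alpha\cdot\bm z+\beta)|^2$ stays finite for complex $\bm z$. Classical results on integral operators with analytic kernels \cite{little1984eigenvalues, castro2020super} then say that the operator $T_k f=\int k(\cdot,\bm{y})f(\bm{y})\,d\mu(\bm{y})$ on $L^2(\mu)$ has eigenvalues $\lambda_j(T_k)=O(R^{-j})$ (in $d=1$; in $d>1$ the exponent is $j^{1/d}$, which I would either absorb or flag). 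Here $R$ is a Bernstein-ellipse parameter determined by the analyticity strip. The concrete route is a rank-$r$ approximation of $k$ by truncated Chebyshev expansions, with error $O(R^{-r})$, combined with the Eckart–Young characterization. The same low-rank approximation, evaluated at the sample points, gives $\lambda_{r+1}(K)\le M\,\|k-k_r\|_\infty = O(MR^{-r})$ deterministically for any point set. This bypasses the need for spectral convergence of the empirical kernel matrix to $T_k$. Taking square roots gives \eqref{eq:asymptotic_decay}.

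For finite $N$ I would use a matrix concentration inequality: matrix Hoeffding or Bernstein for the sum of $N$ independent rank-one terms $\psi(\bm\theta_j)\psi(\bm\theta_j)^\top$, each of norm at most $M$. This gives, with probability $1-\delta$,
\begin{equation}
\|\Psi\Psi^\top/N - K\|_2 \le \varepsilon_N := C\,M\sqrt{\log(M/\delta)/N}.
\end{equation}
By Weyl, $\hat\sigma_j^2\le \lambda_j(K)+\varepsilon_N$. Exponential decay at rate $R_2\le R$ is therefore visible as long as $R_2^{-j}$ still dominates $\varepsilon_N$, that is, $R_2^{-j}\gtrsim M N^{-1/2}\sqrt{\log(M/\delta)}$. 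Taking logarithms gives $j\le(\tfrac12\log N-\log M)/\log R_2$ plus a $\log\log(M/\delta)$ correction, which is \eqref{eq:bound_index}. The hypothesis $N>M^2$ is exactly what makes this index range nonempty.

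The main obstacle is the eigenvalue-decay step. It requires a uniform analytic extension of the expected kernel, which depends on tail control of $\bm\alpha$, and it requires handling the dimension dependence honestly. I would first prove it for $d=1$ with compactly supported $\bm\alpha$ distributions and then state the extension.
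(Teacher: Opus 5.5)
Your argument is correct and has the same outer structure as the paper's proof: the expected random-feature kernel $k$, the law of large numbers plus Weyl as $N\to\infty$, and matrix Hoeffding plus Weyl with the same logarithmic count for finite $N$. The middle step differs.

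\textbf{How you bound the kernel-matrix eigenvalues.} The paper bounds the eigenvalues of $K=[k(\bm{x}_i,\bm{x}_\ell)]$ by going through the integral operator. Little--Reade gives $\lambda_j(\mathcal{T}_k)=O(R^{-j})$, and Braun's (2006) theorem transfers this to the kernel matrix with high probability. That theorem needs i.i.d.\ points and Mercer eigenfunctions bounded uniformly in the index, which analyticity of each $\phi_j$ does not by itself supply. You instead use a uniform rank-$r$ Chebyshev approximation $k_r$ and the deterministic bound $\lambda_{r+1}(K)\le\|K-K_r\|_2\le M\|k-k_r\|_\infty$, and never touch the operator spectrum.

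\textbf{What each route buys.} Your route holds for any point set, including the equidistant grids of the experiments, and needs no eigenfunction hypothesis. It also tracks the factor $M$ correctly, and that factor is genuine: for the positive sigmoid, $\lambda_1(K)\ge\bm{1}^\top K\bm{1}/M\ge M\min k$, and it is $K/M$, not $K$, whose spectrum approximates that of $\mathcal{T}_k$. The paper's route would give a two-sided spectral comparison, which this upper-bound statement does not need.

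\textbf{Your caveats.} These correct genuine gaps in the paper rather than being detours.
\begin{enumerate}
\item The paper's final step $\hat\sigma_j=\sqrt{N\hat\lambda_j(G_{M,N})}$ drops a diverging factor $\sqrt{N}$. So \eqref{eq:asymptotic_decay} only makes sense for $\Psi/\sqrt{N}$, as you say.
\item A bounded extension of $k$ to a fixed Bernstein ellipse needs control of $\bm{\alpha}$. The nearest poles of the sigmoid $\psi(\alpha z+\beta)$ lie at imaginary distance $\pi/|\alpha|$ from the real axis.
\item For $d>1$ the natural rate is $\exp(-c\,j^{1/d})$. The paper's appeal to the sphere results of Castro et al.\ does not settle this on a general bounded domain.
\end{enumerate}
Restricting to $d=1$ with compactly supported $\bm{\alpha}$, matching the uniform sampling of Eq.~\eqref{eq:upper_bound}, is the right scope.
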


\begin{proof}[Sketch of the proof]
The proof relies on three main ingredients: 
(i)~the analyticity of the activation $\psi$, which via the Little and Reade (1984)~\cite{little1984eigenvalues} theory implies exponential decay $O(R^{-j})$ of the eigenvalues of the associated integral operator $k(x,y) := \mathbb{E}_{\bm{\theta}}\!\left[\psi(x;\bm{\theta})\psi(y;\bm{\theta})\right]$; 
(ii)~finite-sample spectral perturbation bounds for kernel matrices under uniformly bounded eigenfunctions~\cite{braun2006accurate}; and 
(iii)~non-asymptotic concentration of the empirical Gram matrix around its expectation, obtained via the matrix Hoeffding inequality~\cite{tropp2012user}. 
The detailed argument is presented in~\ref{app:proof_5}.
\end{proof}
Note that, in Eq.~\eqref{eq:bound_index}, the estimate $O(R_2^{-j/2})$ provides an exponentially decaying bound for indices $j \lesssim s_{R_2}$, where $s_{R_2} = (\log N)/(\log R_2)$.  
A direct estimate in terms of $R$ would instead give $j \lesssim s_{R}$ with $s_{R} = (\log N)/(\log R)$.  
The ratio $s_2/s = \log R/\log R_2$ can be made large by taking $R_2 \downarrow 1$.  
For instance, with $R=5$ and $R_2=1.1$, we obtain $\log R/\log R_2 \approx 16.89$, implying that the range of indices $j$ for which the exponential decay $R_2^{-j/2}$ remains effective is considerably wider than that obtained from the original parameter $R$.
Importantly, this theoretical insight aligns with empirical observations: exponential decay of the form $\hat{\sigma}_j = O(R^{-j/2})$ is already visible in the numerical experiments of Section~\ref{sec:numerical_results} for finite $N$, comparable to $M$.
This suggests that the asymptotic guarantee of Proposition~\ref{prop:Psi_decay_asymptotic} may extend to the non-asymptotic regime where \( N \simeq M \), likely under mild additional assumptions on the activation function and the distribution of the network parameters. A rigorous proof of such a finite-sample result, while of significant theoretical interest, is beyond the scope of this paper and is deferred to future work.

\begin{remark}
In practice, the singular values of the collocation matrix $\Psi$ inherit the rapid algebraic decay described in Proposition~\ref{prop:Psi_decay_asymptotic}. This exponential decay implies that many $\hat \sigma_j$ of $\Psi$ quickly drop below any practical tolerance $\tau$.
Such small singular values, when retained, produce excessively large contributions in the pseudo-inverse $\Psi^{\dagger}$, leading to spurious or numerically unstable eigenvalues in $J_u$. This behavior provides a practical justification for truncation. Accordingly, we select a threshold $\tau$ (e.g., $\tau = 10^{-8}$), guided by a standard rule of thumb: in double-precision arithmetic (unit roundoff $\epsilon_{\text{mach}} \approx 2.2 \times 10^{-16}$), a tolerance $\tau \approx \sqrt{\epsilon_{\text{mach}}}$ suppresses the amplification of rounding errors while retaining the singular directions that carry meaningful physical information. All singular values satisfying $\sigma_i < \tau$ are set to zero. 
\end{remark}

\begin{remark}
    Note that the exponential singular value decay established in Proposition~\ref{prop:Psi_decay_asymptotic} provides an explanation for the severe rank-deficiency observed in the collocation matrix $\Psi$ when analytic activation functions are used. Other factors, such as inherent randomness, basis non-orthogonality, and poor choices of hyperparameter distributions, can also exacerbate the ill-conditioning. 
    Although the theorem does not cover less regular activations such as ReLU — where decay may be slower and polynomial — significant numerical rank-deficiency is still expected in general.
\end{remark}

\subsubsection{Generalized eigenproblem via a shift--invert Arnoldi strategy}
For the reasons discussed in the previous section, namely, the ill-conditioning and potential numerical rank-deficiency of the collocation matrix $\Psi$, we avoid forming $J_u$ explicitly -- unlike the ``naive" approach in Eq.~\eqref{eq:naive} -- and instead re-formulate the generalized eigenproblem, as in Eq.~\eqref{eq:gen_eigRPNN}, directly in the \emph{weight space}, with the aim of providing a numerically stable and efficient approach for computing the eigenvalue spectrum associated with admissible perturbations.

In the PI-RPNN framework, any weight perturbation $v\in\R^N$ induces a physical perturbation
\begin{equation}
\phi \;=\; \Psi v \in \R^M.
\end{equation}
Substituting $\phi=\Psi v$ into Eq.~\eqref{eq:gen_eigRPNN} and using Eq.~\eqref{eq:chainrule_jac} yields
\begin{equation}
J_u \Psi v = \lambda B \Psi v,
\end{equation}
and hence the \emph{weight--space} generalized eigenproblem
\begin{equation}
\label{eq:JW_weight_eig}
J_w\, v = \lambda\, \tilde{B}_{\Psi}\, v,\qquad \tilde{B}_{\Psi} := B\,\Psi \in \R^{M\times N}.
\end{equation}
Once an eigenvector $v$ is found, the associated physical eigenfunction is reconstructed as $\phi=\Psi v$.

Eq.~\eqref{eq:JW_weight_eig} is written with matrices of size $M\times N$. Typical regimes are $M\ge N$ (overdetermined collocation) or $M\le N$. The equation is homogeneous in $\R^M$; nontrivial solutions $v$ exist only when the pencil $J_w-\lambda \tilde{B}_{\Psi}$ is rank-deficient for $\lambda$, i.e.\ when $\mathrm{rank}(J_w-\lambda B)<N$ in the relevant subspace. To operate with standard eigensolvers we will map~\eqref{eq:JW_weight_eig} to an $N\times N$ operator acting on $v$. 

From~\eqref{eq:JW_weight_eig} we seek pairs $(\lambda,v)\neq(0,0)$ such that
\begin{equation}
(J_w - \lambda \tilde{B}_{\Psi})\, v = 0.
\end{equation}
A robust route to compute eigenvalues nearest a target shift $\sigma\in\C$ is the \emph{shift--invert} transform applied to the generalized pencil:
\begin{equation}
T(\sigma) \;:=\; (J_w - \sigma \tilde{B}_{\Psi})^{\dagger}\, \tilde{B}_{\Psi},
\end{equation}
where $(J_w - \sigma \tilde{B}_{\Psi})^{\dagger}$ denotes a numerically stabilized (truncated) pseudo-inverse of the rectangular matrix $J_w - \sigma \tilde{B}_{\Psi}$. If $v$ satisfies~\eqref{eq:JW_weight_eig} with eigenvalue $\lambda\neq\sigma$, then
\begin{equation}
T(\sigma)\, v \;=\; \frac{1}{\lambda-\sigma}\, v.
\end{equation}
Hence eigenpairs $(\theta,v)$ of $T(\sigma)$ are related to original eigenpairs by
\begin{equation}
\theta = \frac{1}{\lambda-\sigma}\quad\Longrightarrow\quad \lambda = \sigma + \frac{1}{\theta}.
\end{equation}
Thus computing a few largest-magnitude eigenvalues $\theta$ of $T(\sigma)$ via Arnoldi yields eigenvalues $\lambda$ of the original problem closest to the shift $\sigma$.
For theoretical background on shift--invert and Arnoldi methods see~\cite{saad2011numerical,ruhe1984rational,scott1982inverted,ericsson1980spectral}.

\textit{Practical matrix-free implementation.}
Directly forming $(J_w-\sigma \tilde{B}_{\Psi})^\dagger$ can propagate the numerical error associate with small singular values; instead we compute a truncated SVD of the (possibly rectangular) matrix
\begin{equation}
A_\sigma := J_w - \sigma \tilde{B}_{\Psi} \in\R^{M\times N}.
\end{equation}
Let the (thin) SVD be
\begin{equation}
A_\sigma \approx U S V^\top,\qquad U\in\R^{M\times r},\ S\in\R^{r\times r},\ V\in\R^{N\times r},
\end{equation}
with singular values $s_1\ge\cdots\ge s_r>0$ above a tolerance $\tau$. The (regularized) pseudo-inverse is
\begin{equation}
A_\sigma^{\dagger} \approx V S^{-1} U^\top \in\R^{N\times M}.
\end{equation}
Define the $N\times N$ operator
\begin{equation}
\mathcal{T}_\sigma := A_\sigma^{\dagger} \tilde{B}_{\Psi} \in\R^{N\times N}.
\end{equation}
This operator is never formed explicitly; we store $U,S,V$ (and $\tilde{B}_{\Psi}$ or routines to apply $\tilde{B}_{\Psi}$ and $A_\sigma$ to vectors) and implement
\begin{equation}
y \mapsto \mathcal{T}_\sigma\, y = V\bigl(S^{-1}(U^\top (\tilde{B}_{\Psi} y))\bigr).
\label{eq:shift_invert_operator}
\end{equation}
By keeping the SVD factors separate and applying $S^{-1}$ only in the reduced space of size $r$, we avoid explicitly constructing the dense and potentially ill-conditioned matrix $\Psi^\dagger$ (or $A_\sigma^\dagger$), which would otherwise amplify rounding errors from small singular values. This factored approach allows the use of smaller truncation tolerances (e.g., $\tau = 10^{-10}$--$10^{-14}$) while maintaining numerical stability, in contrast to forming the full pseudo-inverse where such tight thresholds could lead to severe ill-conditioning.
This matrix-free action is directly suitable for a Krylov--Arnoldi solver (e.g., \texttt{eigs} in \texttt{MATLAB} or any other software leveraging ARPACK libraries) that only requires matrix–vector products and relies on standard linear algebra routines for efficiency.

\textit{Algorithm.}
\begin{enumerate}
  \item Choose a shift $\sigma$ near the part of the eigenvalue spectrum of interest.
  \item Form $J_w$ and $\tilde{B}_{\Psi}=B\Psi$ (we only need routines to apply these matrices to vectors and, for the SVD, to assemble $A_\sigma$).
  \item Compute a truncated SVD $A_\sigma \approx U S V^\top$ with tolerance $\tau$ (drop singular values $s_i<\tau$).
  \item Define the matrix-free operator $\mathcal{T}_\sigma$ via $v\mapsto V(S^{-1}(U^\top(\tilde{B}_{\Psi} v)))$.
  \item Use Arnoldi (e.g., \texttt{eigs} in \texttt{MATLAB} with function handle) to compute the leading $k$ eigenpairs $(\theta_i,v_i)$ of $\mathcal{T}_\sigma$.
  \item Recover original eigenvalues $\lambda_i = \sigma + 1/\theta_i$ and physical eigenfunctions $\phi_i = \Psi v_i$.
\end{enumerate}

\subsection{Implementation Details: Selection of Random Weights and Biases}
\label{sec:random_agnostic_selection}
In this section, we examine the \emph{a priori} selection of the internal weights and biases for the RPNN approximator. Although the theoretical results hold for any continuous distribution, ensuring linear independence of the random bases with probability~1, practical implementation requires careful selection. Random projection bases are typically non-orthogonal and often redundant; even with parsimonious sampling, this can lead to significant rank-deficient and numerical ill-conditioned collocation matrix $\Psi$ in Eq.~\eqref{eq:RPNN_solve}.

For one-dimensional intervals $[a,b]$, as discussed in previous works~\cite{fabiani2023parsimonious,fabiani2021numerical, calabro2021extreme, galaris2022numerical}, an effective strategy for selecting basis functions is to ensure that the activation functions $\psi_j$ have centers $c_j=-\beta_j/\alpha_j$ located within the domain of interest. The optimal bounds of the parameter distribution depend on both the choice of activation function and the specific problem, and may vary for tasks such as simple regression, problems involving spatial derivatives, or time-dependent dynamics with hidden or challenging stability constraints.

For the remainder of this work, we focus on the logistic sigmoid as a representative smooth activation function for shallow, single-hidden-layer randomized networks:
\begin{equation}
    \psi_j(x)\equiv \sigma_j(x) =\frac{1}{1+\text{exp}(-\bm{\alpha_j}\cdot \xb -\beta_j)}.
    \label{eq:logistic_sigmoid}
\end{equation}

A practical feature of the shallow single-hidden-layer architecture is that all spatial derivatives required by the PDE residuals are available in closed form and can be evaluated analytically. 
Specifically, the first and second derivatives with respect to the $k$-th component of $\xb$, $x_k$ are given:
\begin{equation}
\label{eq:der:SF}
\begin{split}
    \frac{\partial}{\partial x_{k}}\sigma_j(\xb)&= \alpha_{j,k}\frac{\text{exp}(z_j)}{(1+\text{exp}(z_j))^2},\\
    \frac{\partial^2}{\partial x_{k}^2}\sigma_j(\xb)&= \alpha_{j,k}^2\frac{\text{exp}(z_j) \cdot (\text{exp}(z_j)-1)}{(1+\text{exp}(z_j))^3},
\end{split}
\end{equation}
where $z_j=\alphab_j \cdot \xb+\beta_j$.

For the one-dimensional case ($d=1$), the logistic sigmoid $\sigma_j$ is a monotonic function such that \(\lim_{x\rightarrow +\infty} \psi(x)=1\) and \(\lim_{x\rightarrow -\infty} \psi(x)=0\).
This function has an inflection point, that we call \emph{center} $c_j$, such that \(\psi(\alpha_{j} \cdot c_j + \beta_j)=\frac{1}{2}\).
The slope $\alpha_{j}$ controls the sharpness of the transition: large $|\alpha_{j}|\gg1$ yields a step-like function, while small $|\alpha_{j}|\ll1$ produces an almost constant function.
To avoid ``inactive" neurons over the domain $I=[a,b]$, $\alpha_j$ can be sampled from a uniform distribution within modest bounds relative to $|I|$ and the number of neurons $N$, and $\beta_j$ set to place centers at selected points $c_j$. While exact upper bounds $\alpha_U$ can be tuned for a given problem, for example in previous works~\cite{fabiani2023parsimonious,fabiani2021numerical, fabiani2024stability, calabro2021extreme}, values such as 
\begin{equation}
\alpha_j \sim \mathcal{U}\Bigl(-\alpha_U, \alpha_U\Bigr), \qquad \text{with } \alpha_U=\frac{\min(N,M)/5+4}{|I|}
\label{eq:upper_bound}
\end{equation}
have been used successfully. These choices are heuristically optimized for stationary nonlinear PDEs but are not meant as universal prescriptions.

For two-dimensional inputs ($d=2$), the sigmoid’s inflection forms a line of centers (the \emph{central direction} of the plane wave), and for a given center $\bm{c}_j=(c_{j,1},c_{j,2})$ the bias is set as
\( \beta_j = -\alpha_{j,1} c_{j,1} - \alpha_{j,2} c_{j,2}. \)
Centers can be chosen systematically (e.g., equally spaced) or randomly sampled within the domain, with biases computed accordingly. These heuristically optimized ranges and placements have proven effective in applications to stationary nonlinear PDEs~\cite{fabiani2023parsimonious, fabiani2021numerical, calabro2021extreme, galaris2022numerical}.

In higher-dimensional settings, which we do not address here, one may consider data-driven geometrical sampling techniques~\cite{galaris2022numerical,bolager2024sampling} or randomized Fourier feature methods for kernel approximation~\cite{rahimi2007random}. Alternative approaches for optimizing hidden-layer weights have also been explored~\cite{dong2022computing}.

\section{Numerical Analysis Results: the Case Studies}
\label{sec:numerical_results}
The efficiency of the proposed numerical scheme is demonstrated through three benchmark nonlinear PDEs, namely (a) the one- and two-dimensional Liouville-Bratu-Gelfand problem; (b) The FitzHugh-Nagumo PDE; (c) The Allen-Cahn Phase-field PDE
These problems have been widely studied as have been used to model and analyse the behaviour of many physical and chemical systems (e.g., see~\cite{allen1972ground, boyd1986analytical, fabiani2025enabling, fabiani2021numerical, galaris2022numerical}).
%

In all the computations with the proposed machine learning (PI-RPNN) scheme, the convergence criterion for Newton's iterations was the $L_2$ norm\footnote{The relative error is the $L_2$--norm of the difference between two successive solutions $||u(\bm{w})_{-1}-u(\bm{w})_{-2}||_2$.}.
Here, we do not perform comparisons with FEM, and use finite differences (FD) solely as a reference; detailed analyses of computational cost, network size, or collocation point convergence are omitted here, as they have been thoroughly addressed in~\cite{fabiani2021numerical, calabro2021extreme}, where PI-RPNN schemes have been shown to have comparable computational cost to FD and FEM  discretizations.

\begin{figure}[ht!]
    \centering
    \begin{tabular}{cc}
      \includegraphics[width=0.45\linewidth]{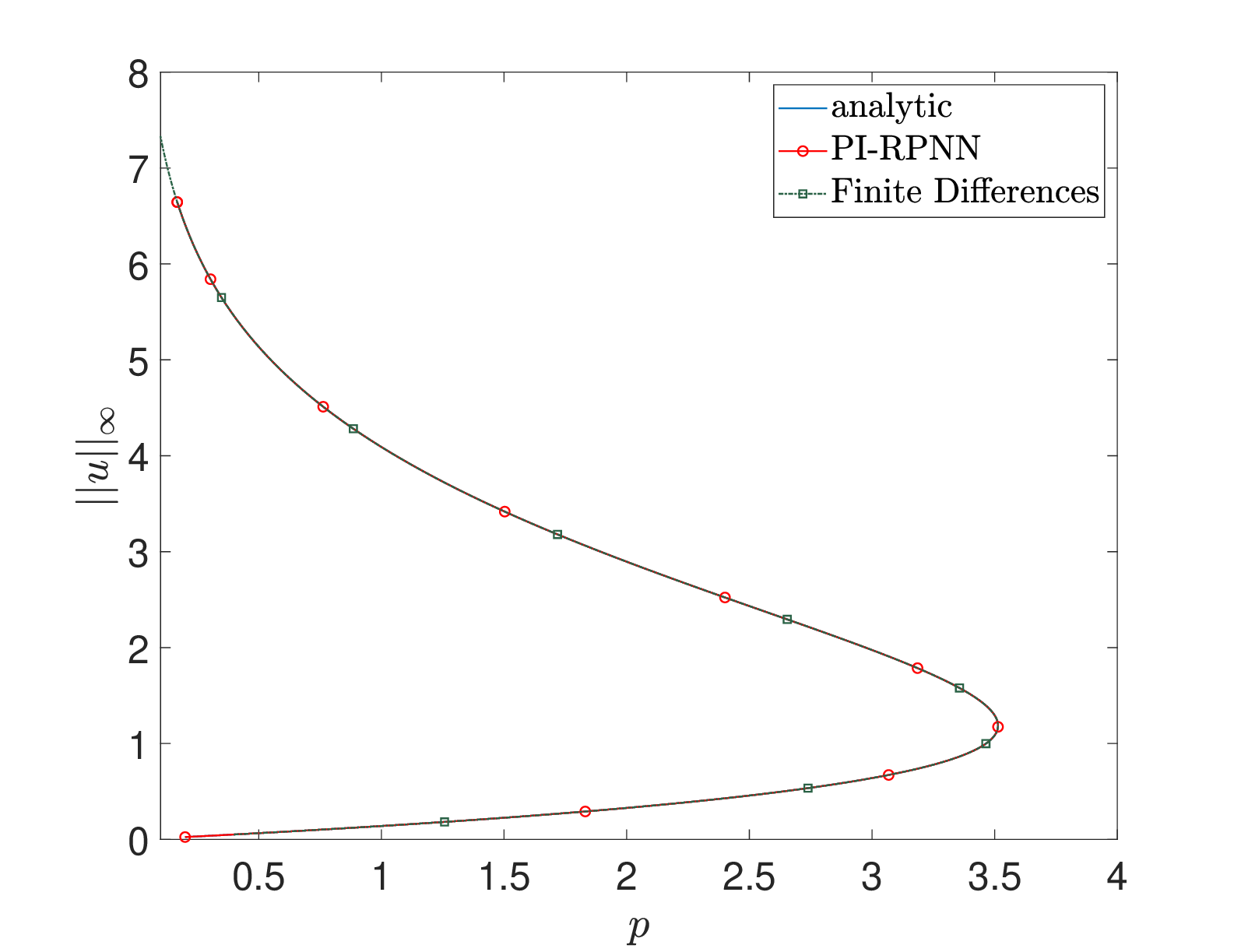}   &  \includegraphics[width=0.45\linewidth]{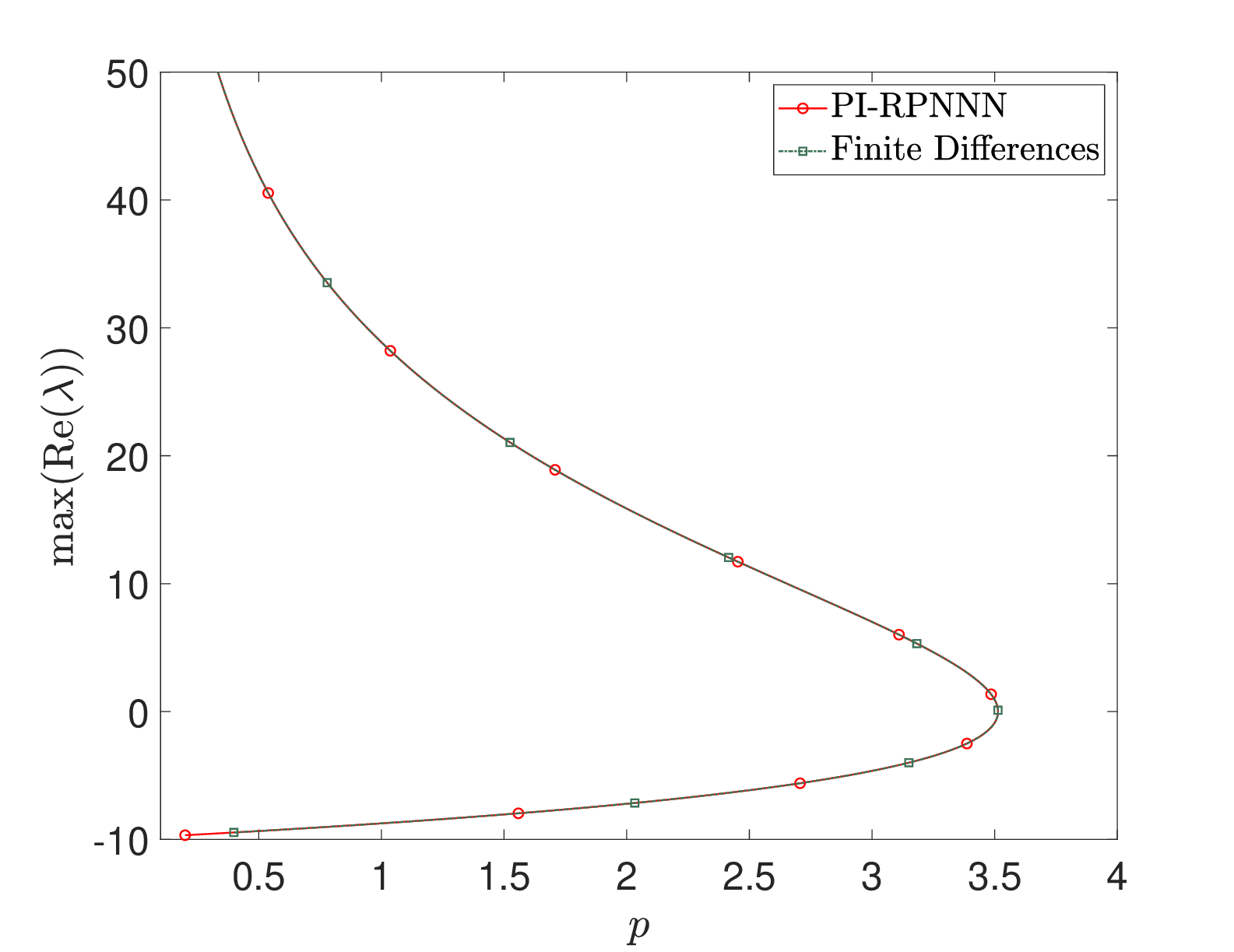} \\
        (a) & (b) \\
       \includegraphics[width=0.45\linewidth]{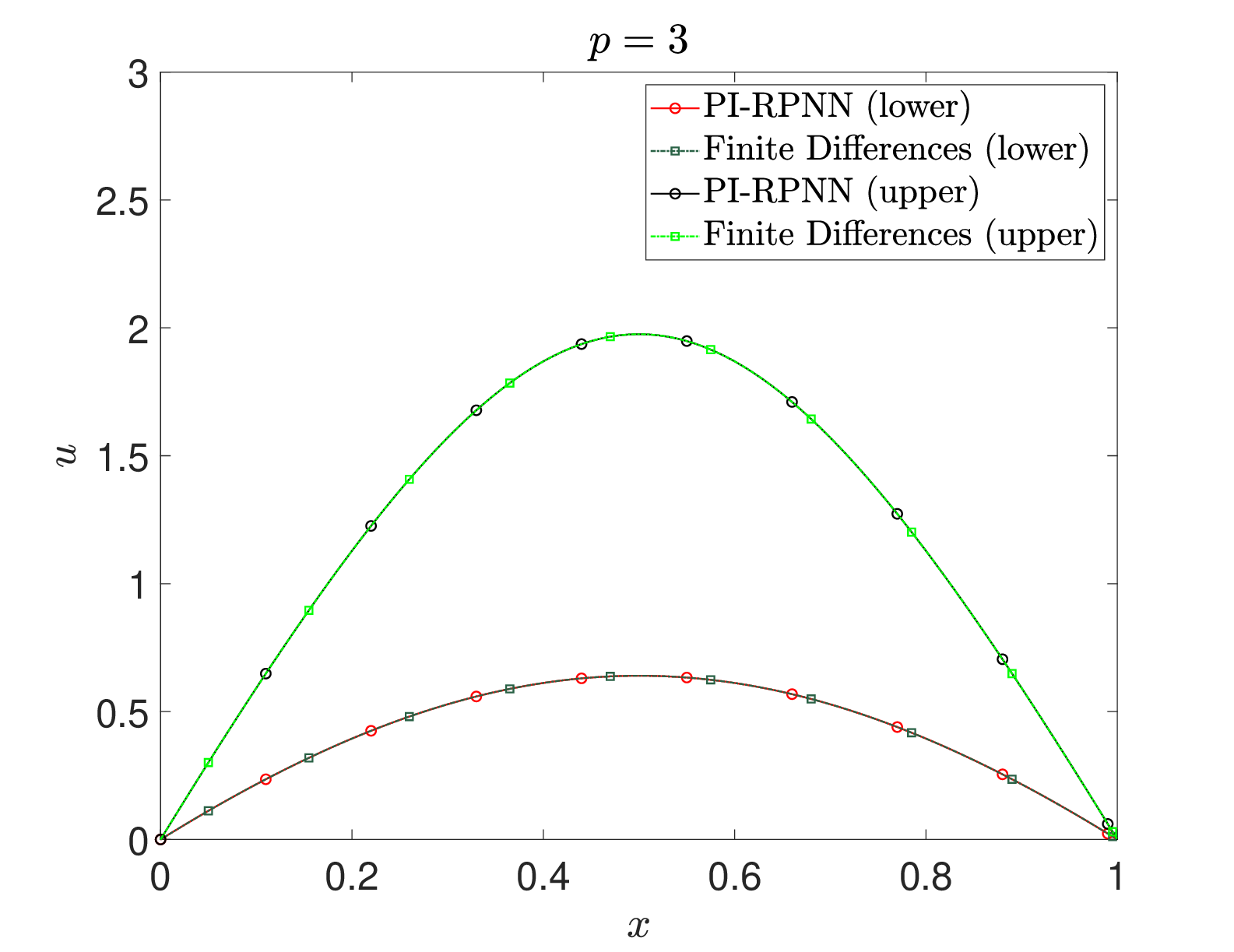}   &  \includegraphics[width=0.45\linewidth]{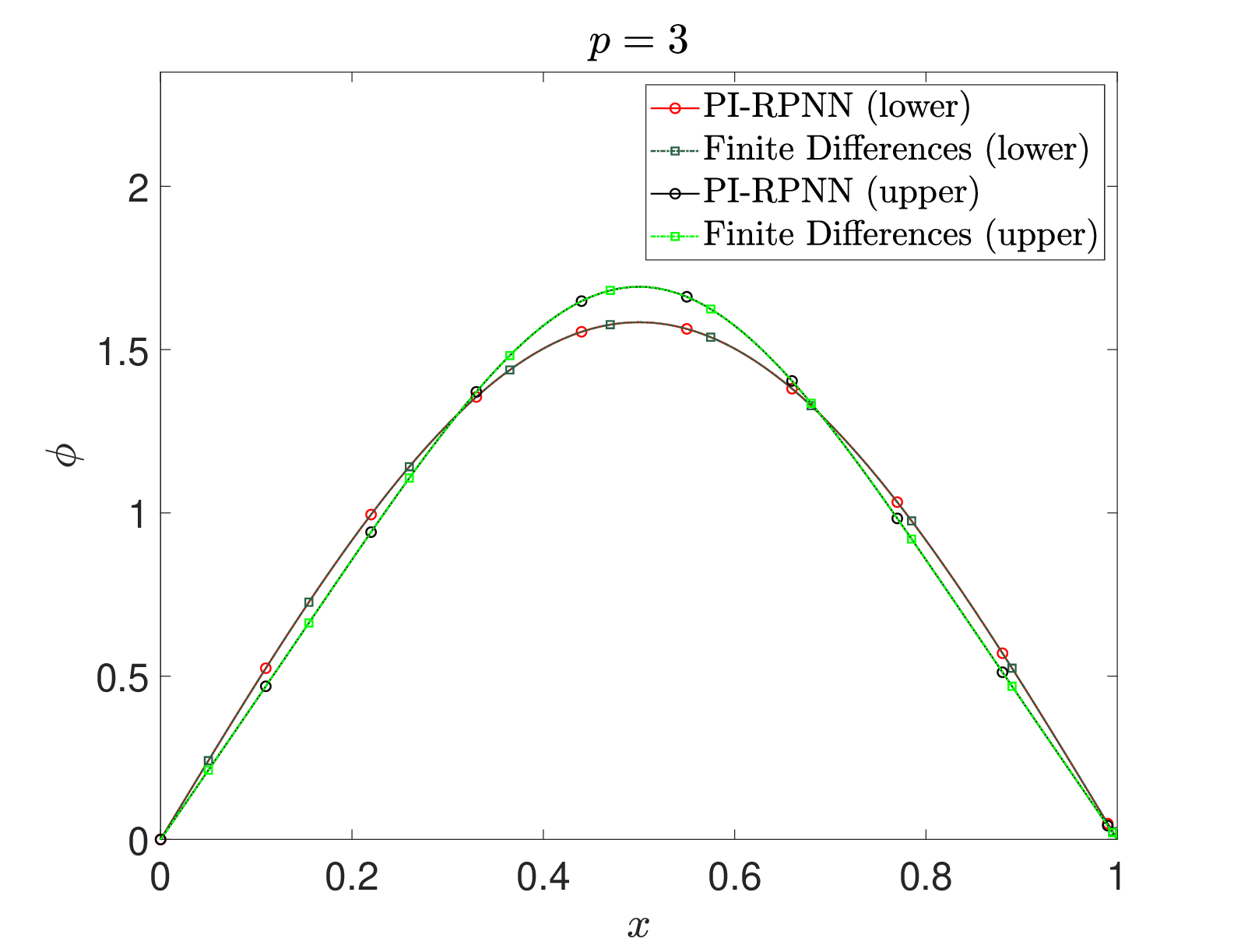} \\
        (c) & (d)
    \end{tabular}
    \caption{\small Numerical results for the one-dimensional Liouville-Bratu-Gelfand problem (\eqref{eq:Bratu}). (a) Bifurcation diagram;
    (b) Dominant eigenvalue of the corresponding generalized eigenproblem. 
    %
    (c) Co-existing solution profiles at $p=3$ corresponding to the lower-stable branch and to the upper-unstable branch.
    (d) Eigenfunction $\phi$ associated with the dominant eigenvalue for the two coexisting solutions at $p=3$.
    The lower (stable) branch has dominant eigenvalue $\lambda_1 \approx -4.64$, while the upper (unstable) branch has dominant eigenvalue $\lambda_1 \approx 7.01$.
    }
\label{fig:bifdiagram_bratu1d}
\end{figure}

\subsection{Case study 1: (saddle-node) The one- and two-dimensional Liouville-Bratu-Gelfand Problem}\label{Sec:Bratu}
Our first illustrative example is a nonlinear diffusion-reaction problem, Liouville-Bratu-Gelfand PDE~\cite{boyd1986analytical}, that arises in modeling several physical and chemical systems, given by:
\begin{equation}
    \Delta u+p \exp(u)=0, \qquad \xb \in \Omega=[0,1]^d,
    \label{eq:Bratu}
\end{equation}
with homogeneous Dirichlet boundary conditions $u(\xb,t)=0$, for $\xb \in \partial\Omega$.
Here we consider both the case $d=1,2$.
The one-dimensional steady state problem ($d=1$, $\Omega=[0,1]$) admits an analytical solution~\cite{mohsen2014simple}, reading
\begin{equation}
\begin{split}
    u(x)=2\ln\frac{\cosh{\theta}}{\cosh{\theta (1-2x)}},
    \text{ such that } \cosh{\theta} = \frac{4\theta}{\sqrt{2 p}}.
\end{split}
\label{eq:sys1}
\end{equation}
It can be shown that when $0<p <p_c$, the problem admits two solution branches that meet in the saddle-node point $p_c \sim 3.513830719$ where the stability changes, while beyond $p_c$ no solutions exist~\cite{boyd1986analytical}. \par
For the two-dimensional case ($d=2$, $\Omega=[0,1]^2$), no exact analytical solution is known. Numerical studies, such as~\cite{hajipour2018accurate}, report a turning point near $p_c \approx 6.808124$.

\begin{figure}[ht!]
    \centering
    \begin{tabular}{cc}
    \includegraphics[width=0.45\linewidth]{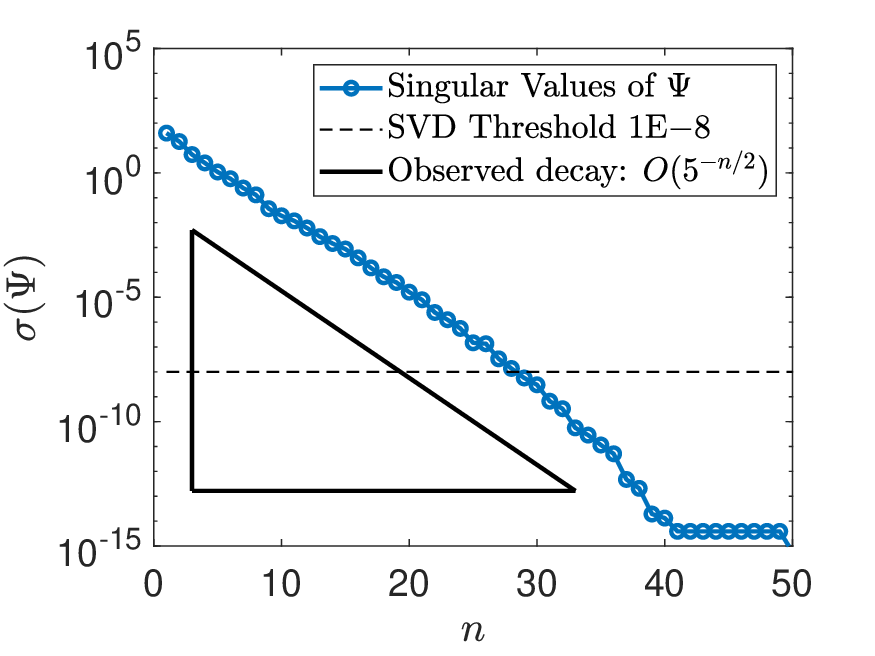}  & 
       \includegraphics[width=0.45\linewidth]{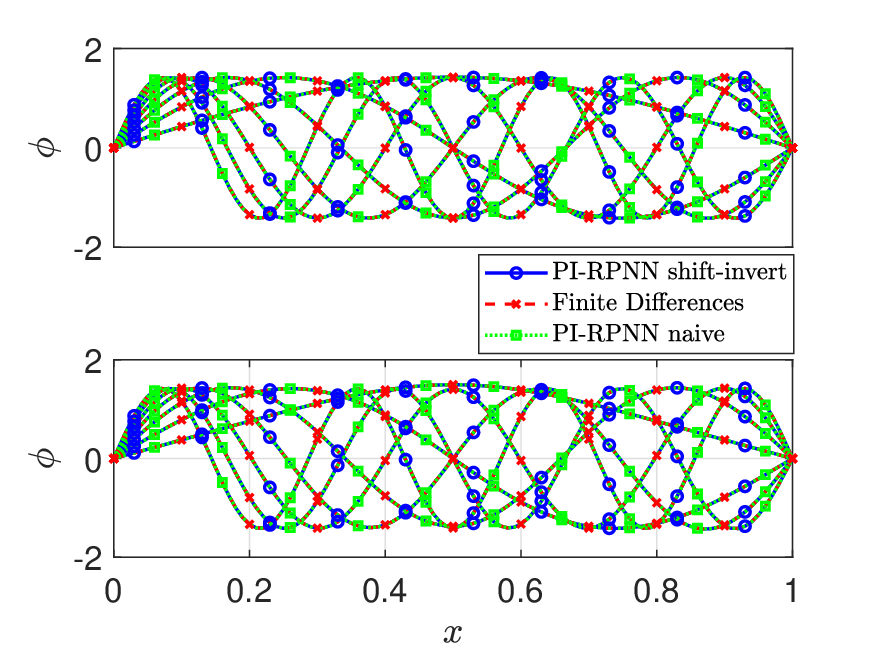}
       \\
       (a)  & (b) \\
       \includegraphics[width=0.45\linewidth]{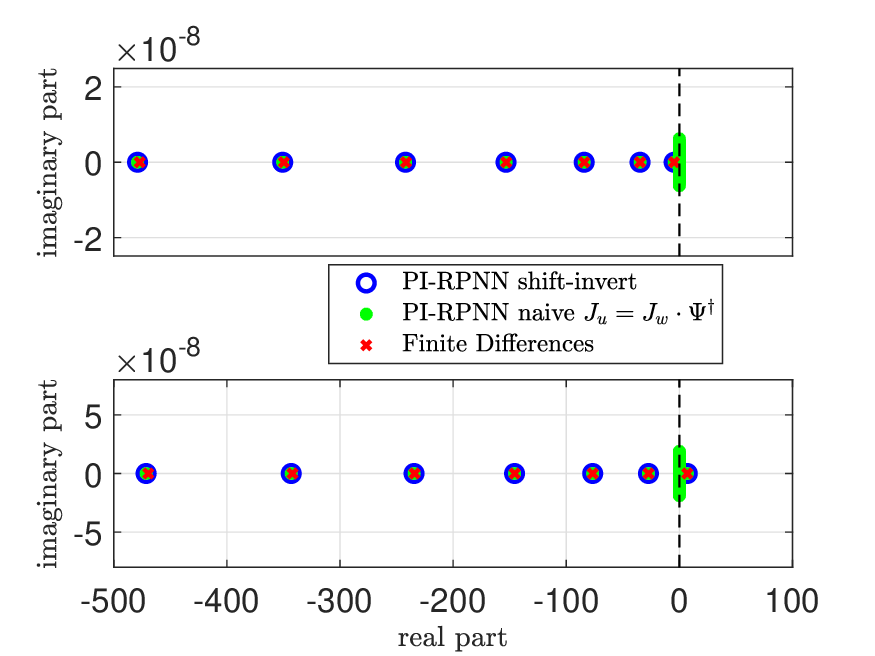} &
       \includegraphics[width=0.45\linewidth]{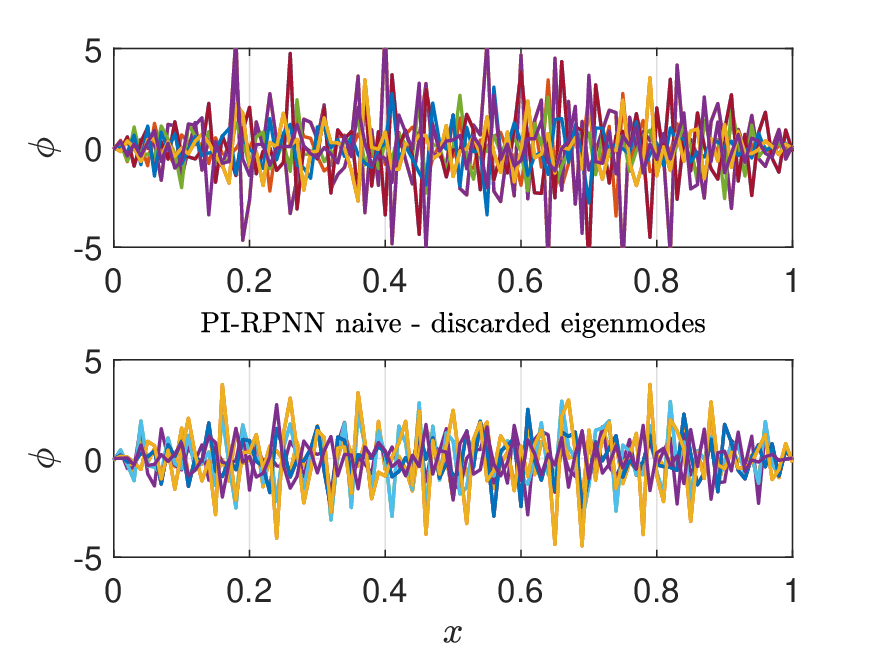} \\
       (c) & (d) 
    \end{tabular}
    \caption{\small Demonstrating the impact of collocation matrix rank-deficiency and the efficacy of the shift-invert formulation. Results for the 1D Liouville-Bratu-Gelfand problem at $p=3$. (a) Singular values of the random projection basis matrix $\Psi \in \R^{50 \times 101}$, exhibiting an empirically observed exponential decay with rate $\sigma_n \approx O(5^{-n/2})$.
    (b) The dominant eigenvectors computed via finite differences (FD, reference), the proposed shift-invert PI-RPNN method, and a ``naive" PI-RPNN approach (computing the physical Jacobian direcly as $J_u=J_w\cdot \Psi^{\dagger}$, and then solving the generelized eigenvalue problem), are visually indistinguishable. (c) Corresponding leading eigenvalues. While all three methods recover the correct dominant eigenvalue, the ``naive" approach also produces a cluster of spurious near-zero eigenvalues originating from the truncated singular modes of $\Psi$.
    (d) Spurious eigenmodes from the ``naive" approach for the stable (upper panel) and unstable (lower panel) branches. Shown are ten representative eigenvectors corresponding to near-zero eigenvalues from the cluster in (c).
    }
    \label{fig:bratu_illcond}
\end{figure}

Considering a set of collocation points $\xb_i\in\Omega$, $i=1,\dots,M_{PDE}$, and using the PI-RPNN network function (~\ref{eq:RPNN}) as an ansatz in the Liouville-Bratu-Gelfand problem (~\ref{eq:Bratu}), leads to the following system of equations:
\begin{equation}
    F_i(\bm{w},p)=\sum_{j=1}^N w_j \sum_{q=1}^d\frac{\partial^2\psi_j(\xb_i)}{\partial x_{q}^2}+p \, \text{exp}\left(\sum_{j=1}^N w_j \psi_j(\xb_i)\right)=0, \quad i=1,\dots,M_{PDE}.
\end{equation}
Then considering points $\xb_k \in \partial \Omega$, $k=1,\dots,M_{BC}$ we obtain for the boundary conditions:
\begin{equation}
    F_k(\bm{w},p)=\sum_{j=1}^N w_j \psi_j(\xb_k)=0, \qquad k=1,\dots,M_{BC}.
\end{equation}
Thus, the elements of the Jacobian matrix $\nabla_{\bm{w}} \Fb$ are given by:
\begin{equation}
\frac{\partial F_k}{\partial w_j}(\bm{w},p) =
\begin{cases}
\displaystyle \sum_{q=1}^d \frac{\partial^2 \psi_j(\xb_k)}{\partial x_{q}^2} + p \, \psi_j(\xb_k) \, \exp\Big(\sum_{j=1}^N w_j \psi_j(\xb_k)\Big), & \xb_k \in \Omega, \\[1ex]
\displaystyle \psi_j(\xb_k), & \xb_k \in \partial\Omega.
\end{cases}
\end{equation}
The application of Newton's method~\eqref{eq:Newton_iter} is straightforward, as all derivatives of the basis functions required by the Jacobian $\nabla_{\bm{w}} \Fb$ can be computed analytically in closed form (see Eq.~\eqref{eq:der:SF}). 

\begin{figure}[ht!]
    \centering
    \begin{tabular}{cc}
      \includegraphics[width=0.45\linewidth]{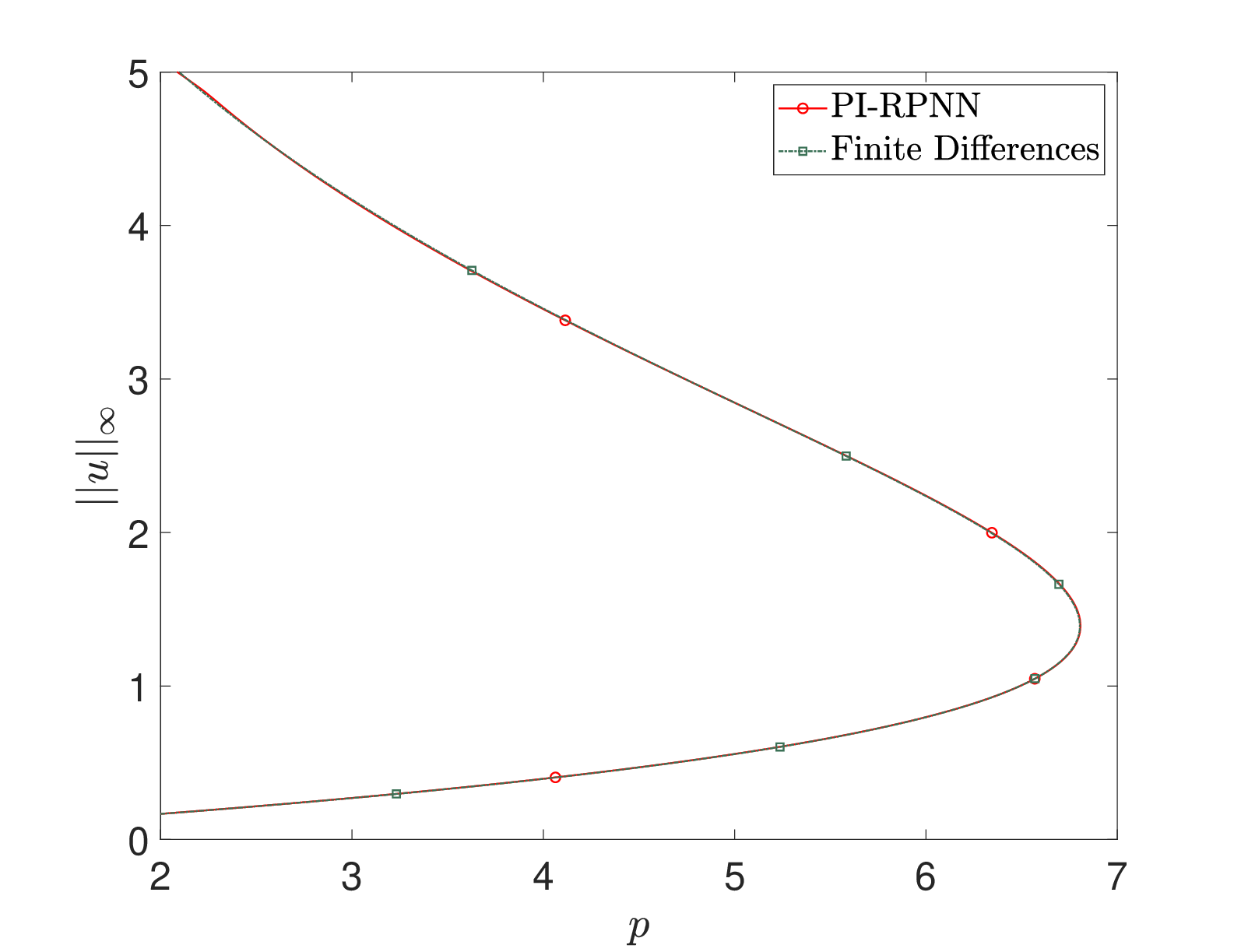}   &  \includegraphics[width=0.45\linewidth]{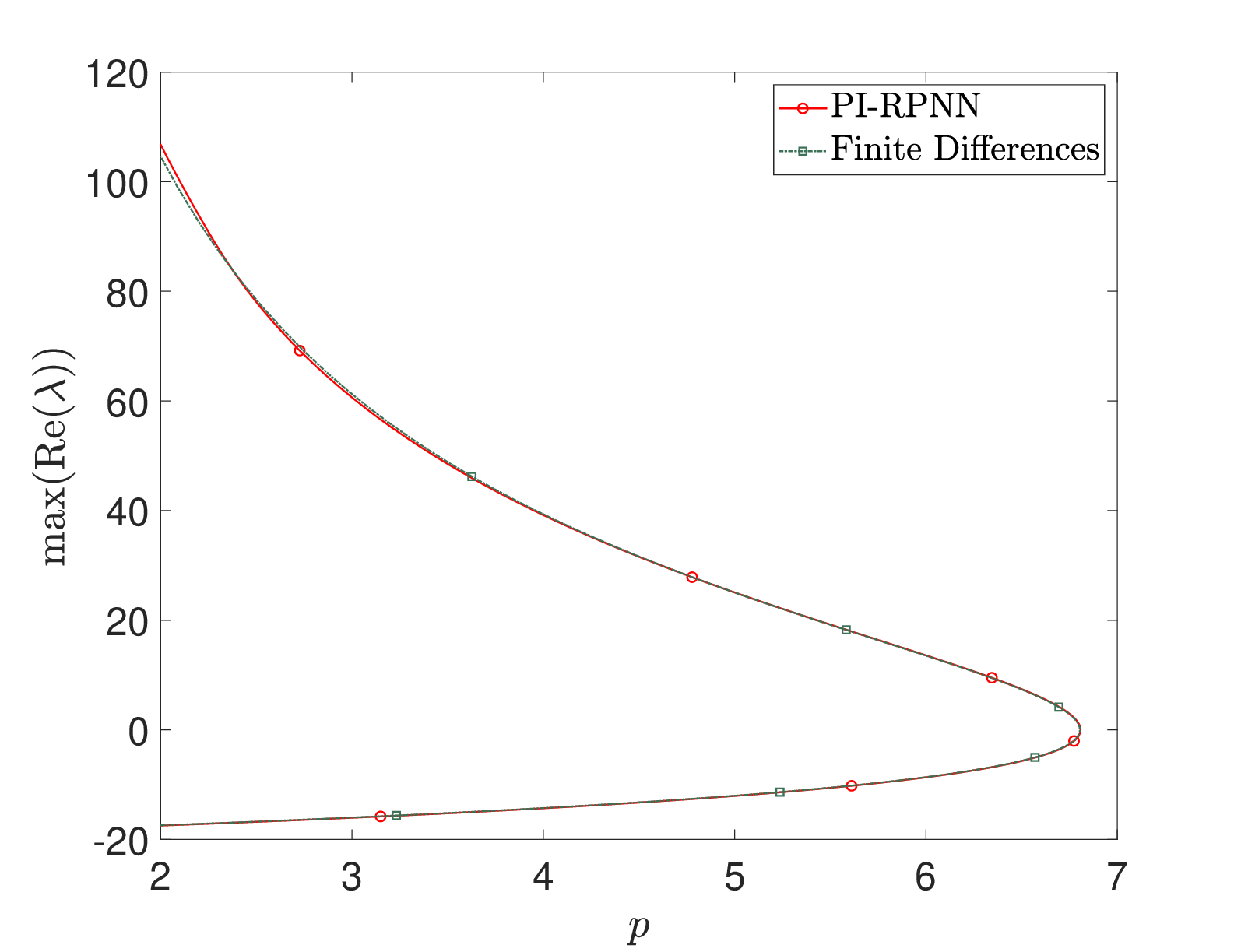} \\
        (a) & (b) \\
        \includegraphics[width=0.45\linewidth]{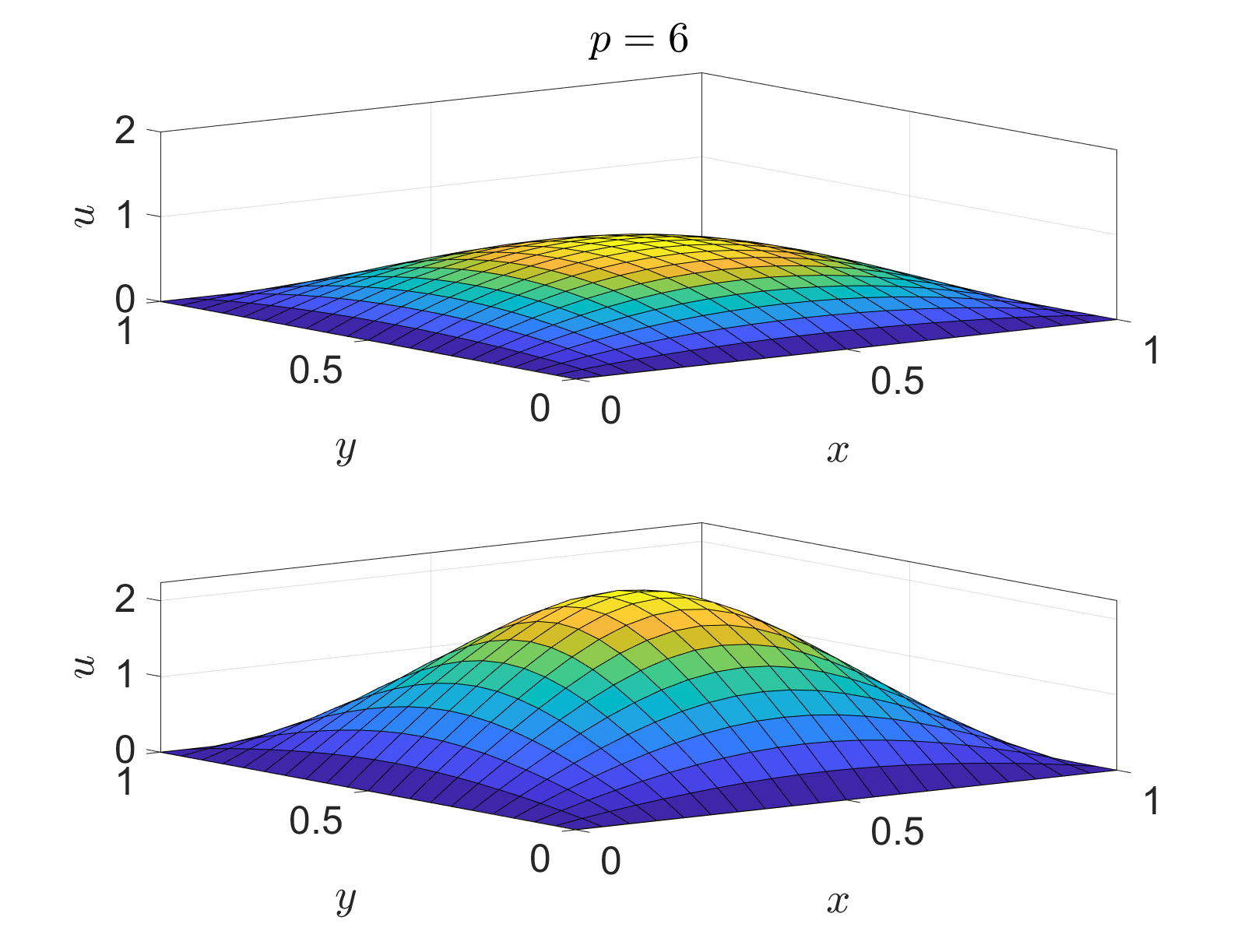}   &  \includegraphics[width=0.45\linewidth]{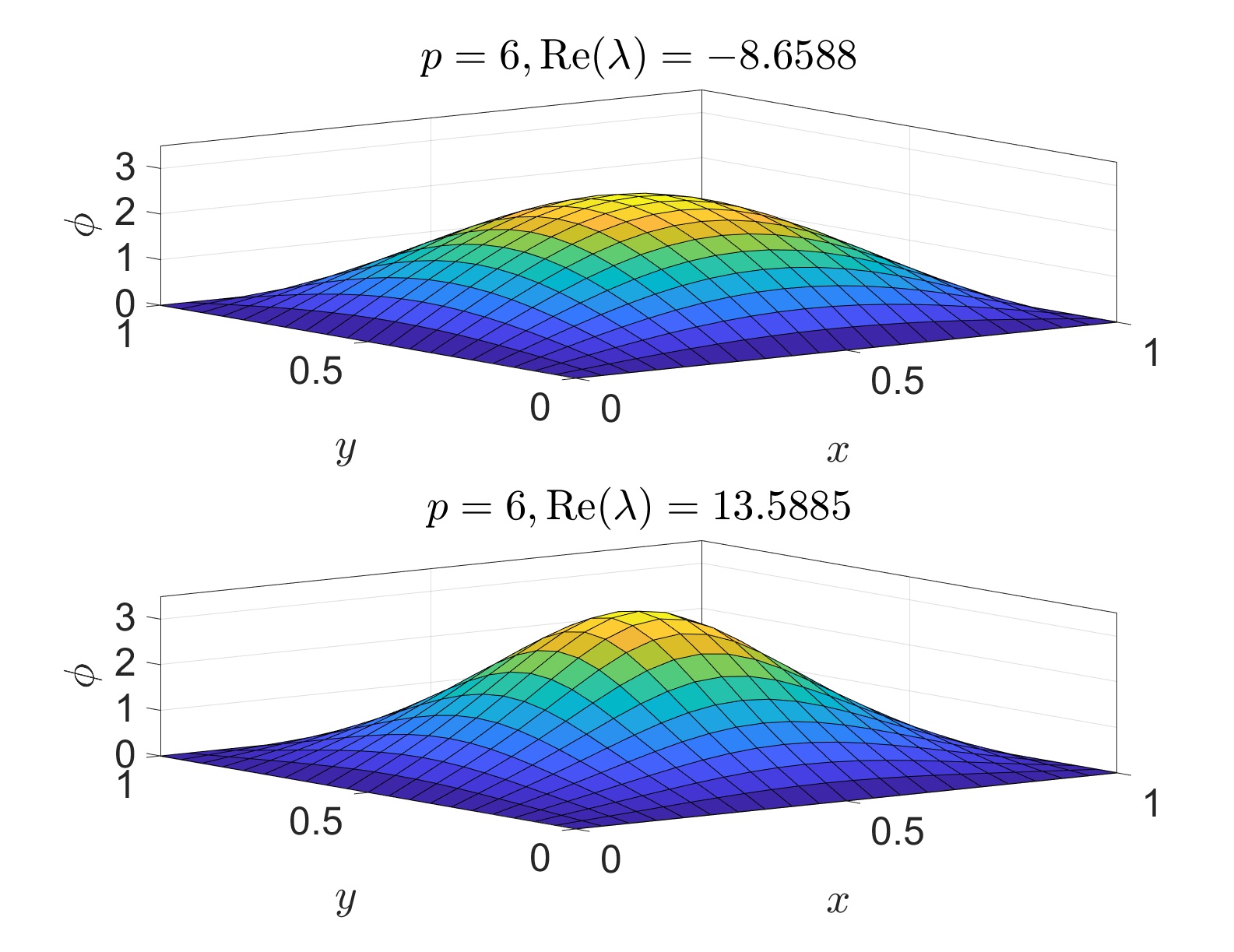} \\
        (c) & (d)
    \end{tabular}
    \caption{\small Numerical results for the two-dimensional Liouville-Bratu-Gelfand problem (\ref{eq:Bratu}). (a) Bifurcation diagram.
    (b) Dominant eigenvalue of the corresponding generalized eigenproblem. 
    %
    %
    The saddle-node, $p_c \approx  6.80736 $, marks the onset of instability for the upper solution branch with positive dominant eigenvalues.
    (c) Co-existing solution profiles at $p=6$ corresponding to the lower-stable branch and to the upper-unstable branch.
    (d) Eigenvector, $\phi$, corresponding to the dominant eigenvalue for the two co-existing solutions at $p=6$.
    }
\label{fig:bifdiagram_bratu2d}
\end{figure}

\subsubsection{One-dimensional Liouville-Bratu-Gelfand Results}
For the one-dimensional case, we use $N=101$ equidistant collocation points and construct a PI-RPNN with $M=50$ neurons.
Fig.~\ref{fig:bifdiagram_bratu1d}(a) shows the bifurcation diagram obtained from the analytical solution, as well as the numerical finite difference, and PI-RPNN solutions.
To quantify the stability of the computed solutions, we solve the generalized eigenvalue problem (\ref{eq:JW_weight_eig}) using a fixed shift $\sigma=1$, and MATLAB's \texttt{eigs} function.
In Fig.~\ref{fig:bifdiagram_bratu1d}(b), we display the dependence of the dominant eigenvalue on parameter $p$ and compare it with the finite-difference results.
All eigenvalues are real;
the largest eigenvalue is negative along the lower (stable) branch of the bifurcation diagram, while at the saddle-node $p_c$, the dominant eigenvalue changes sign, indicating a change in stability.
Fig.~\ref{fig:bifdiagram_bratu1d}(d) shows the corresponding dominant eigenvector for both the stable (lower) and unstable (upper) branch solution at $p=3$.
To validate our findings, we solve the one-dimensional (\ref{eq:Bratu}) using finite-differences and perform the corresponding stability analysis with MATLAB's \texttt{eigs} function.
Both numerical approaches produce indistinguishable results.\par

\subsection{Addressing Ill-Conditioning: A Comparative Analysis}
\label{sec:illcond_analysis}

To underscore the necessity of the proposed shift-invert formulation, we perform a comparative analysis on the 1D Liouville-Bratu-Gelfand problem at $p=3$. As discussed in Section~\ref{sec:PI-RPNN_stability}, the ``naive" approach -- explicitly reconstructing $J_u$ via $J_w \Psi^{\dagger}$ -- is critically hampered by the severe rank-deficiency of the collocation matrix $\Psi$, and we now illustrate this concretely through a numerical comparison.

Fig.~\ref{fig:bratu_illcond}(a) displays the singular values of \(\Psi\) for our standard discretization (\(M=101\) collocation points, \(N=50\) neurons). The rapid decay confirms a condition number of \(\mathcal{O}(10^{17})\), with a numerical rank significantly lower than \(\min(M, N)\) when a practical pseudo-inverse tolerance (e.g., \(\tau = 10^{-8}\), dashed line) is applied. This rank deficiency has direct consequences for the computed eigenvalue spectrum.
The observed empirical decay rate is approximately $O(5^{-n/2})$ in Fig.~\ref{fig:bratu_illcond}(a), in agreement with Proposition~\ref{prop:Psi_decay_asymptotic}. Note that the effective rate $R$ is sensitive to the sharpness (upper bound) of the sigmoid activation used, and the reported behavior is specific to the present discretization with $M=101$ and $N=50$, and choice of upper-bound as in Eq.~\eqref{eq:upper_bound}.
As shown in Fig.~\ref{fig:bratu_illcond}(c), solving the full generalized eigenproblem for the naively constructed \(J_u\) produces, in addition to the correct leading eigenvalues, a dense cluster of spurious near-zero eigenvalues. The nature of these spurious modes is illustrated in Fig.~\ref{fig:bratu_illcond}(d), which displays ten representative eigenfunctions from this cluster for both the stable and unstable solution branches. These discarded eigenfunctions exhibit high-frequency, unstructured oscillations devoid of physical meaning.
These artifacts correspond directly to the truncated singular modes of \(\Psi\). Although the associated dominant eigenvectors remain accurate and visually indistinguishable from the reference solution (see Fig.~\ref{fig:bratu_illcond}(b)), the contaminated eigenvalue spectrum renders the approach unreliable for automated bifurcation detection, where distinguishing true critical eigenvalues from numerical artifacts is essential.

Beyond numerical stability, the shift-invert Arnoldi method offers a decisive computational advantage. The naive approach requires solving a dense, full generalized eigenvalue problem of size \(M \times M\). In contrast, our matrix-free method targets only a few eigenvalues near a specified shift \(\sigma\) using the Arnoldi iteration, which relies solely on matrix-vector products with the operator \(\mathcal{T}_{\sigma}\) defined in Eq.~\eqref{eq:shift_invert_operator}. This avoids the \(\mathcal{O}(M^3)\) cost of a full diagonalization. In our experiments for this configuration, solving the full generalized eigenproblem for the naive approach required an average of \(0.0058\) seconds, while computing the first seven eigenvalues using our shift-invert implementation required only \(0.0025\) seconds—effectively halving the computational time. While these timings are indicative for this moderate-scale problem, the advantage of the matrix-free, targeted approach becomes increasingly significant for finer discretizations and higher-dimensional problems, where forming and factoring large, dense matrices becomes prohibitive. Thus, the proposed method not only ensures spectral fidelity by circumventing the rank-deficient collocation matrix inversion but also provides a scalable pathway for stability analysis in larger-scale settings.

\subsubsection{Two-dimensional Liouville-Bratu-Gelfand Results}
For the two-dimensional case, we discretize the domain using a grid of $21 \times 21$ equidistant collocation points. 
The PI-RPNN is constructed with $N=800$ neurons, and Fig.~\ref{fig:bifdiagram_bratu2d}(a) shows the resulting bifurcation diagram, featuring a saddle-node at $p_C \approx 6.80736$.
For validation, we compare the PI-RPNN solutions against a finite-difference reference solution, observing very good agreement. 
To assess the stability of the computed solutions, we solve the generalized eigenvalue problem (\ref{eq:JW_weight_eig}) using a shift $\sigma=1$.
Fig.~\ref{fig:bifdiagram_bratu2d}(b) displays the dominant eigenvalue along the solution branches;
negative eigenvalues identify the lower stable branch, with the stability switch occurring at $p_c \approx 6.80736$.
Figures~(\ref{fig:bifdiagram_bratu2d})(c)-(d) present the two co-existing two-dimensional Liouville-Bratu-Gelfand solutions at $p=6$.
In panel (c), the top subfigure corresponds to the stable lower branch.
Panel (d) shows the dominant eigenfunctions computed using MATLAB's \texttt{eigs}.
The top subfigure illustrates the eigenvector $\phi$ associated with the stable dominant eigenvalue $\lambda=-8.6588$, while the bottom subfigure displays the dominant unstable eigenvector corresponding to $\lambda=13.5885$.

\subsection{Case study 2: FitzHugh-Nagumo (FHN)}
\label{sec:FHN}
Our second test problem is the FitzHugh--Nagumo (FHN) system. Originally introduced in~\cite{fitzhugh1961impulses} as a reduced model of the Hodgkin--Huxley equations, it describes the evolution of an activator \(u\) and an inhibitor \(v\) along an unmyelinated axon. 
\begin{figure}[ht!]
    \centering
    \begin{tabular}{cc}
      \includegraphics[width=0.45\linewidth]{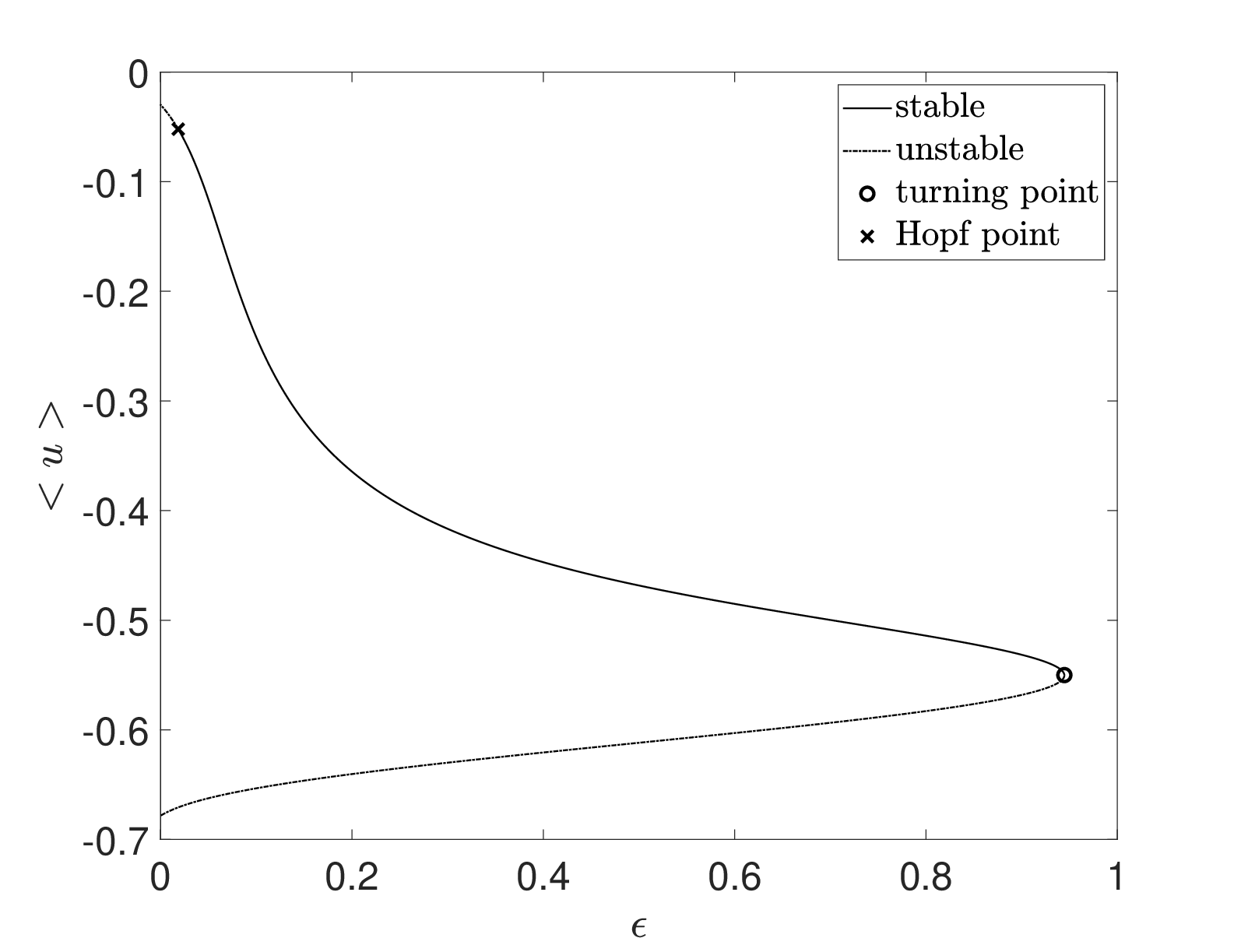}   &  \includegraphics[width=0.45\linewidth]{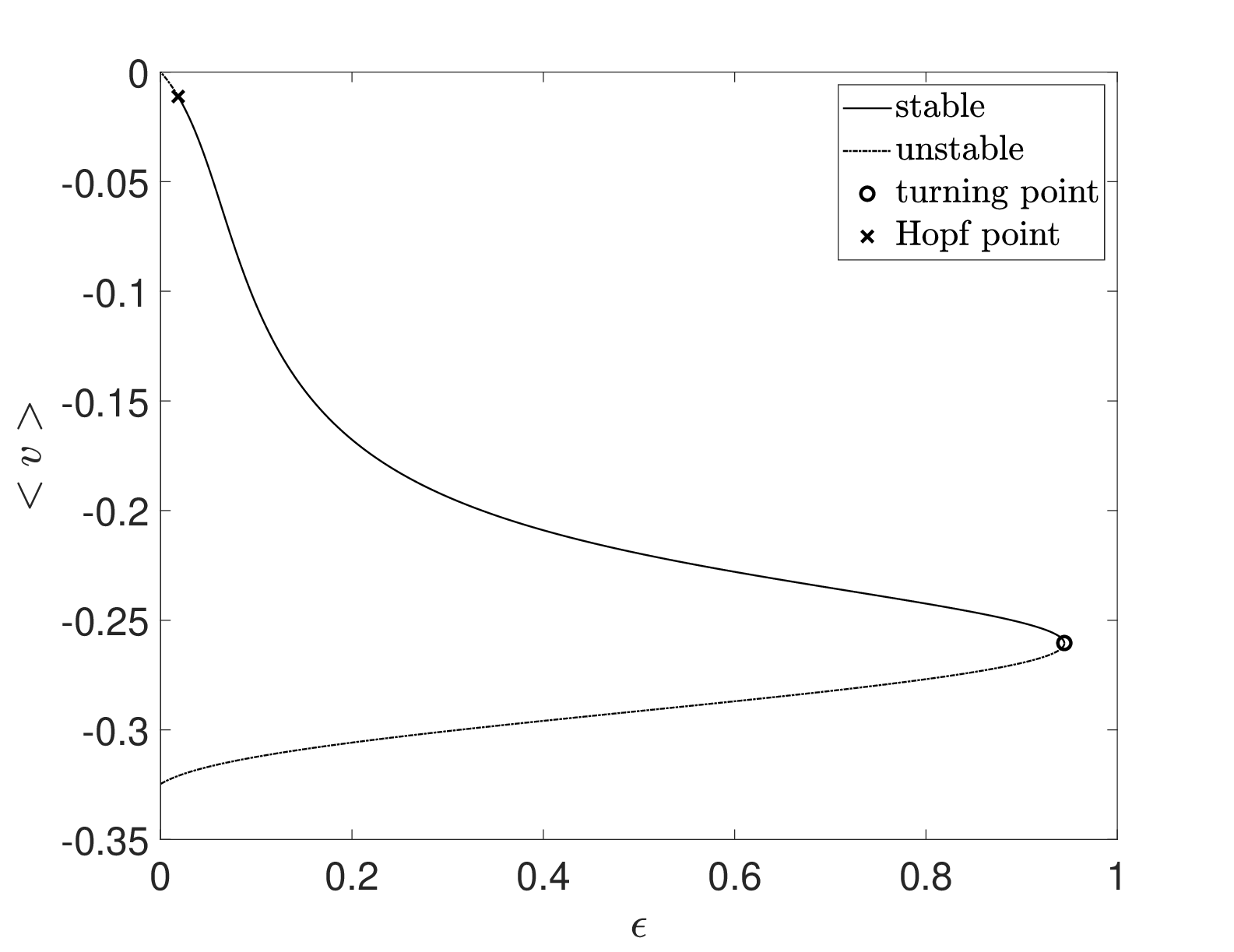} \\
        (a) & (b) \\
        \includegraphics[width=0.45\linewidth]{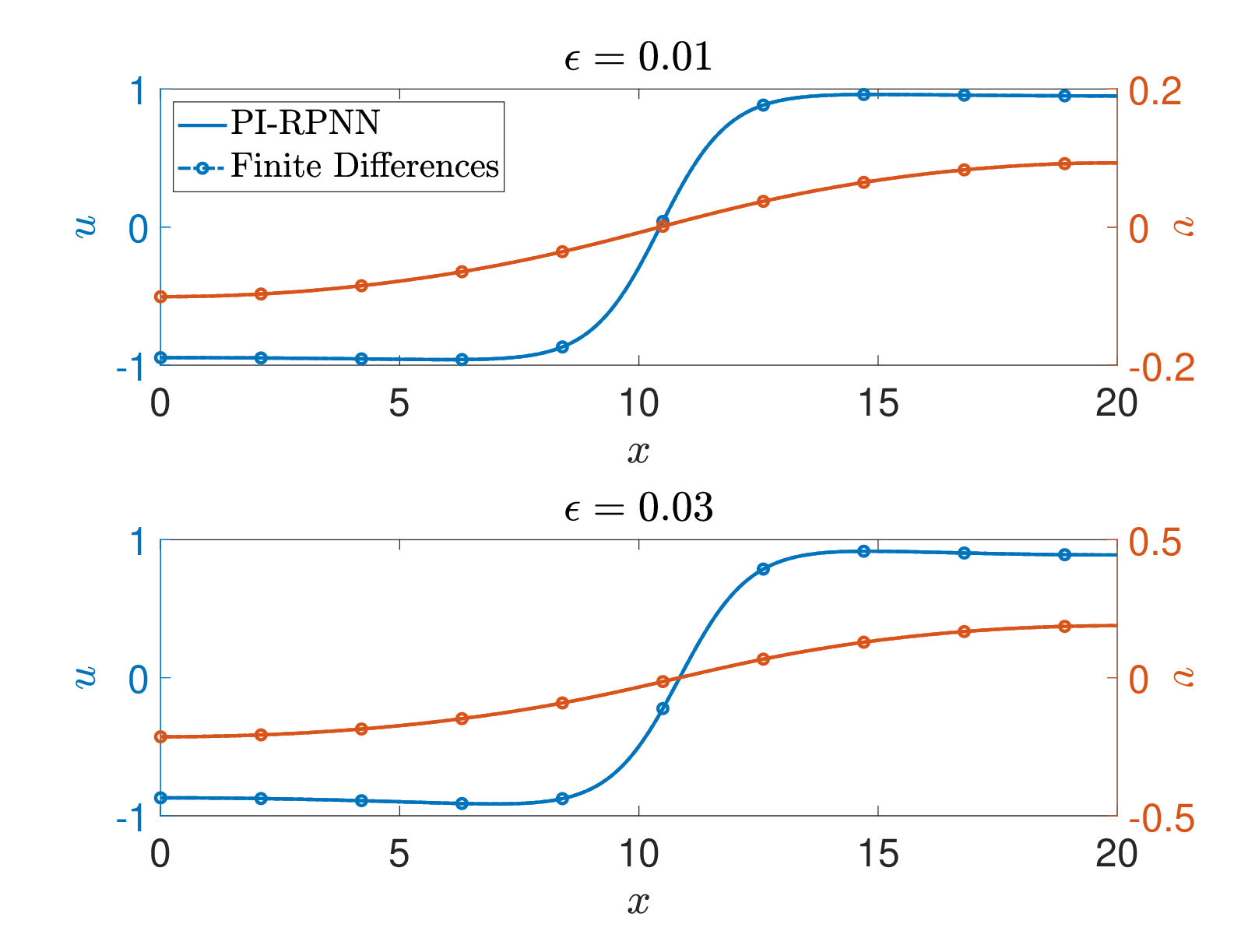}   &  \includegraphics[width=0.45\linewidth]{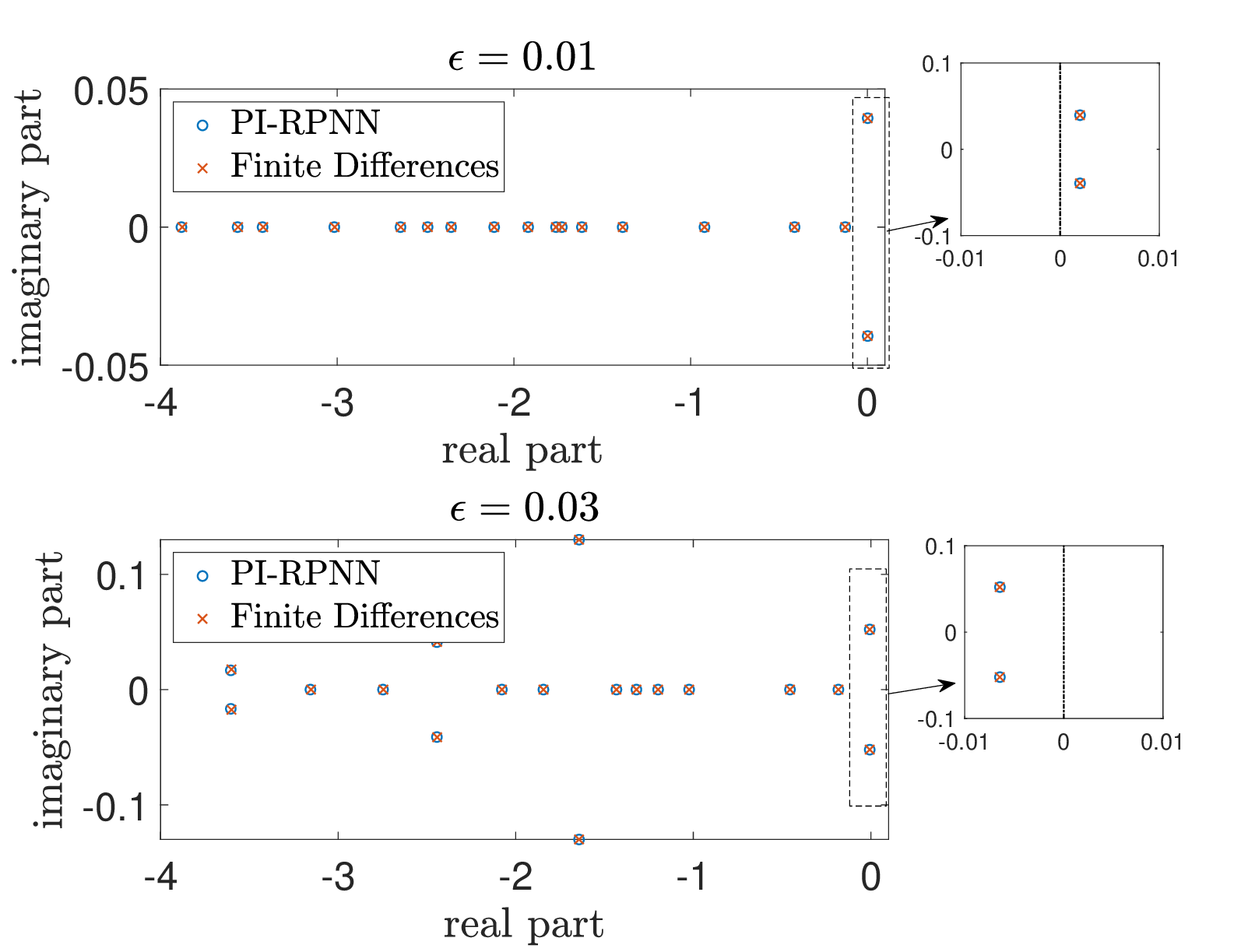} \\
        (c) & (d)
    \end{tabular}
    \caption{\small Numerical results for the FitzHugh-Nagumo problem (\ref{eq:FHN_PDE}). (a)-(b) Bifurcation diagrams, showing the dependence of $<u>$ and $<v>$ on the parameter $\varepsilon$.
    %
    %
    The open circle at $\varepsilon_c \approx 0.9446$ marks the saddle point at which stability switches from the upper stable branch to the lower unstable branch. 
    The black cross at $\varepsilon_H \approx 0.0184$ denotes a Hopf point.
    (c) Solution profiles of components $u$ and $v$ for $\varepsilon$ values on both sides of the Hopf point:
    the top subfigure depicts the unstable solution at $\varepsilon=0.01$, and the bottom subfigure displays the stable solution profiles at $\varepsilon=0.03$.
    The circled lines correspond to numerical solutions computed using finite differences with the same number of equidistant collocation points; 
    %
    %
    (d) Dominant eigenvalues at $\varepsilon=0.01$ and $0.03$ (top and bottom subfigures). 
    At $\varepsilon=0.01$, a pair of complex eigenvalues with positive real parts quantifies the instability of the steady-state solution.
    At $\varepsilon=0.03$, the corresponding complex pair has negative real parts. 
    %
    %
    }
\label{fig:bifdiagram_FHN}
\end{figure}

We study the system of one-dimensional reaction--diffusion equations:
\begin{equation}
\label{eq:FHN_PDE}
\begin{aligned}
\frac{\partial u(x,t)}{\partial t} &= D^{u}\,\frac{\partial^{2} u(x,t)}{\partial x^{2}}
+ u(x,t) - u(x,t)^{3} - v(x,t),  \\[2mm]
\frac{\partial v(x,t)}{\partial t} &= D^{v}\,\frac{\partial^{2} v(x,t)}{\partial x^{2}}
+ \varepsilon\big(u(x,t) - a_{1} v(x,t) - a_{0}\big),
\end{aligned}
\end{equation}
posed on \(\Omega=[0,L]\) with \(L=20\). The constants \(D^u,D^v\) and \(a_0,a_1\) specify the diffusion and reaction strengths, while \(\varepsilon\) is treated as the bifurcation parameter.  
Homogeneous Neumann boundary conditions are imposed on both fields. The corresponding steady-state branch exhibits a turning point and a supercritical Hopf bifurcation, as reported in~\cite{galaris2022numerical, fabiani2025enabling}.\par

The use of collocation points combined with the PI--RPNN formulation yields a finite-dimensional system of nonlinear algebraic equations.  
As illustrated pedagogically for the Liouville-Bratu-Gelfand problem, the residuals arise by enforcing the PDE and boundary conditions at all collocation points. %
For the FitzHugh--Nagumo system, the procedure is identical and omitted here for brevity. 
The only difference is that one introduces two residual blocks, associated with the fields $u$ and $v$, and uses output weights  
$w \in \mathbb{R}^{N \times 2}$, where the first column parameterizes the approximation of $u$ and the second column parameterizes the approximation of $v$.\par

\subsubsection{FitzHugh-Nagumo results}
We solve the steady-state of (\ref{eq:FHN_PDE}) and perform a parametric analysis with respect to parameter $\varepsilon$.
In our computations we set: $D^u=1$, $D^v=4$, $a_0=-0.03$, and $a_1=2$.
The domain $\Omega = [0,L]$ is discretized with $201$ equidistant collocation points, and the PI--RPNN is constructed with $N=200$ neurons for the approximation of $u$ and $N=200$ neurons for the approximation of $v$.
\begin{figure}[ht!]
    \centering
    \begin{tabular}{cc}
      \includegraphics[width=0.45\linewidth]{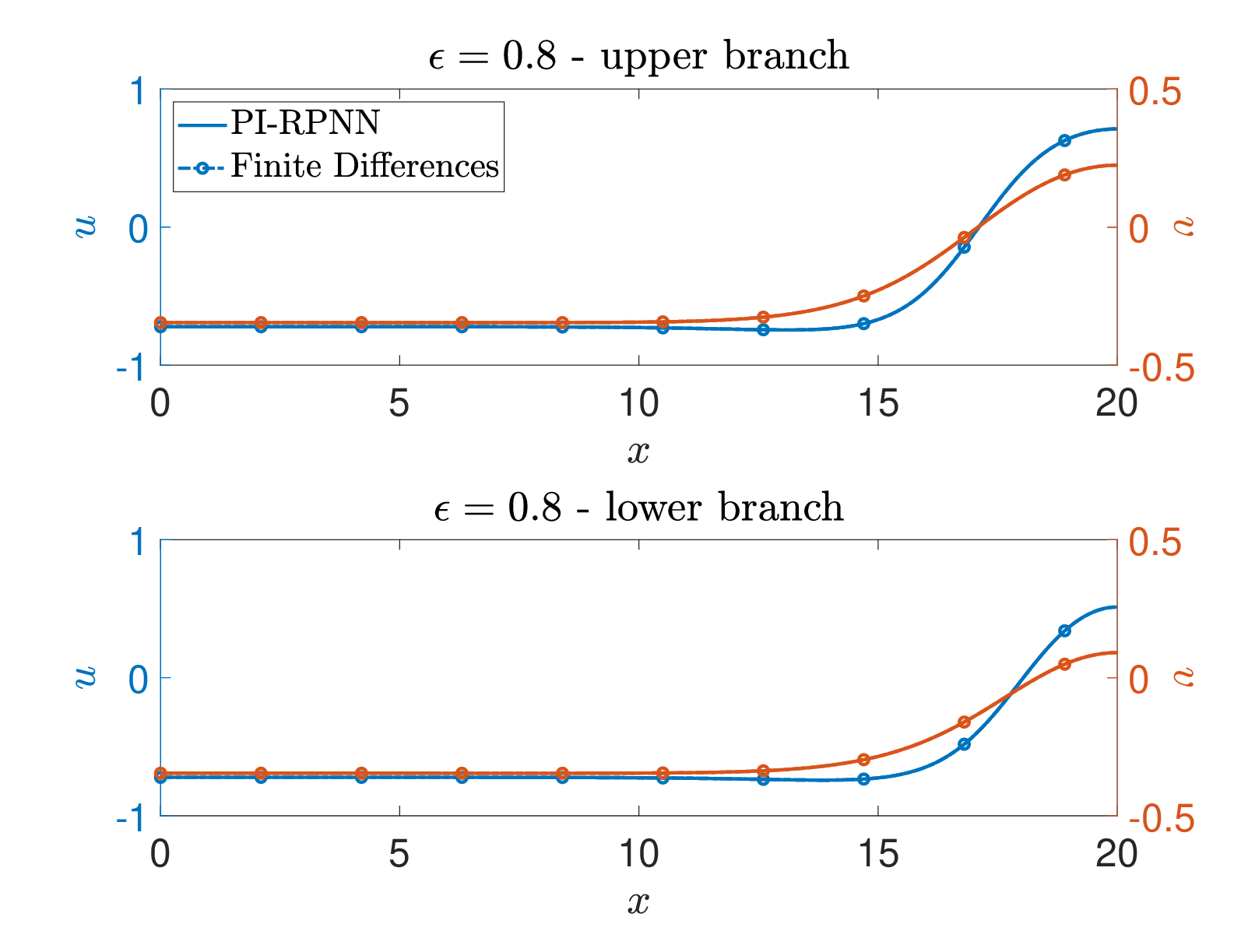}   &  \includegraphics[width=0.45\linewidth]{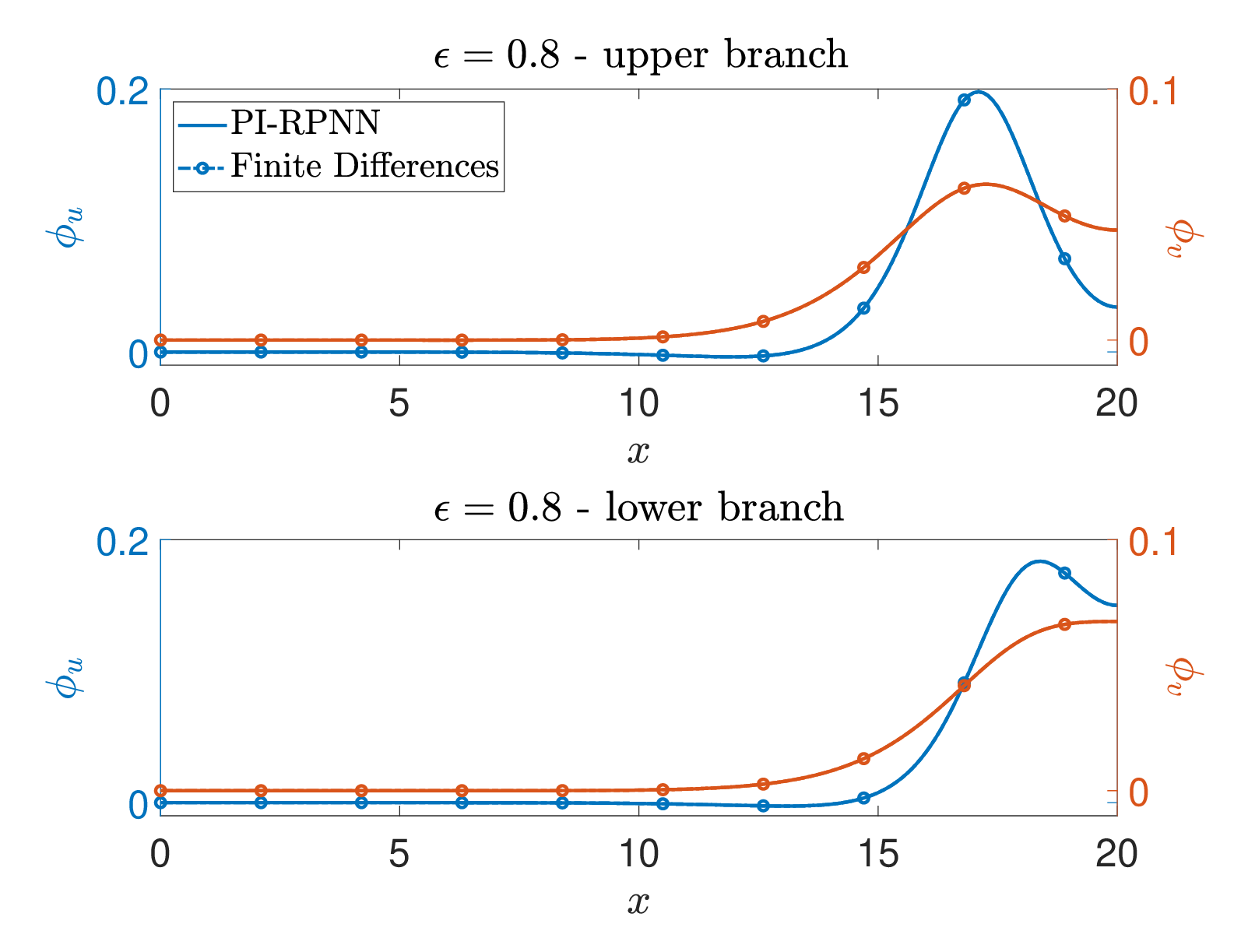} \\
        (a) & (b) 
    \end{tabular}
    \caption{Coexisting steady-state solutions of the FitzHugh-Nagumo problem (\ref{eq:FHN_PDE}) at $\varepsilon=0.8$.(a)
    The top subfigure shows the solution on the upper stable branch, with negative dominant eigenvalue ($\lambda \approx -0.0495$). 
    The bottom subfigure displays the solution on the lower unstable branch, characterized by a positive dominant eigenvalue ($\lambda \approx 0.1266$). 
    (b) The right panel depicts the corresponding dominant eigenvectors for the $u$ and $v$ components (top: stable branch, bottom: unstable branch). 
    The circled curves in panels (a) and (b) correspond to the finite-difference reference solutions. 
    }
\label{fig:FHN_saddle}
\end{figure}
Figures~(\ref{fig:bifdiagram_FHN})(a)-(b) depict the resulting bifurcation diagrams for $u$ and $v$, respectively, featuring a turning point at $\varepsilon_C \approx 0.9446$ (marked with a circle), and a Hopf point at  $\varepsilon_H \approx 0.0184$.
The existence of Hopf point is confirmed through our generalized eigenvalue analysis: for $\varepsilon>\varepsilon_H$, the dominant eigenvalues form a complex conjugate pair with negative real parts (Fig.~\ref{fig:bifdiagram_FHN}(d), bottom subfigure shows the eigenvalue spectrum at $\varepsilon=0.03$), indicating stability. 
When $\varepsilon<\varepsilon_H$, this pair crosses into the right half-plane and the equilibrium loses stability. 
For illustration, we show the dominant eigenvalues at $\varepsilon=0.01$ (Fig.~\ref{fig:bifdiagram_FHN}(d), top subfigure), where the leading complex pair has positive real parts.

Fig.~\ref{fig:FHN_saddle}(a) displays the coexisting solutions on the upper stable and lower unstable branches at $\varepsilon=0.8$.
The dominant eigenvalue is negative on the upper branch ($\lambda \approx -0.0495$), and positive on the lower branch ($\lambda \approx 0.1266$). 
The right panel of Fig.~\ref{fig:FHN_saddle} depicts the corresponding dominant eigenvectors computed using our generalized eigensolver (\ref{eq:gen_eigRPNN}).
Our findings are validated against the reference finite-difference solutions, illustrated as circled curves, demonstrating excellent agreement. 

\subsection{Case study 3: Allen-Cahn 1D (pitchfork)}
\label{sec:Allen_Cahn}
Our third illustrative case is the one-dimensional Allen-Cahn PDE~\cite{allen1972ground,zhao2024bifurcation}, a model widely used to describe phase transitions and interface dynamics.
\begin{figure}[ht!]
    \centering
    \begin{tabular}{cc}
      \includegraphics[width=0.45\linewidth]{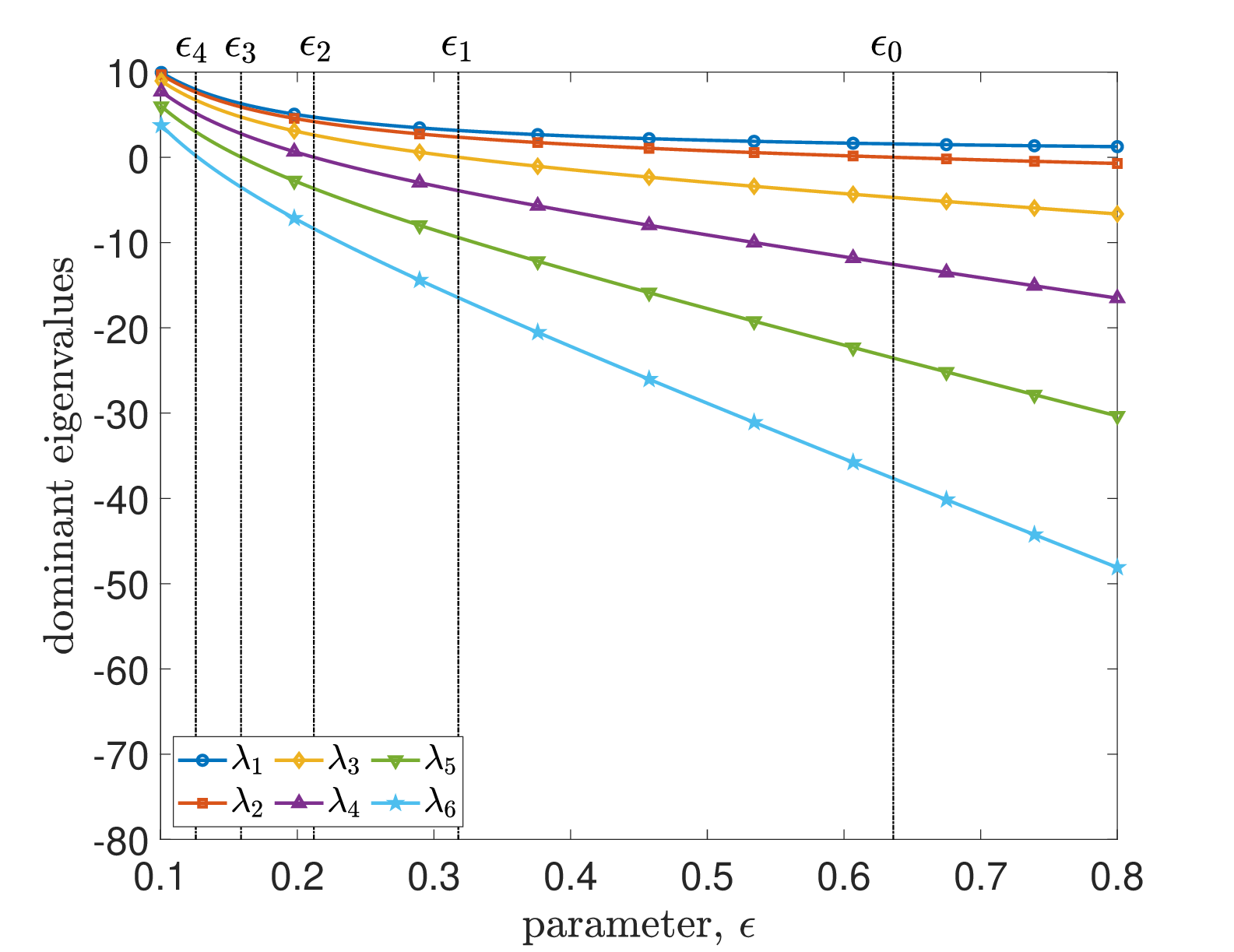}   &  \includegraphics[width=0.45\linewidth]{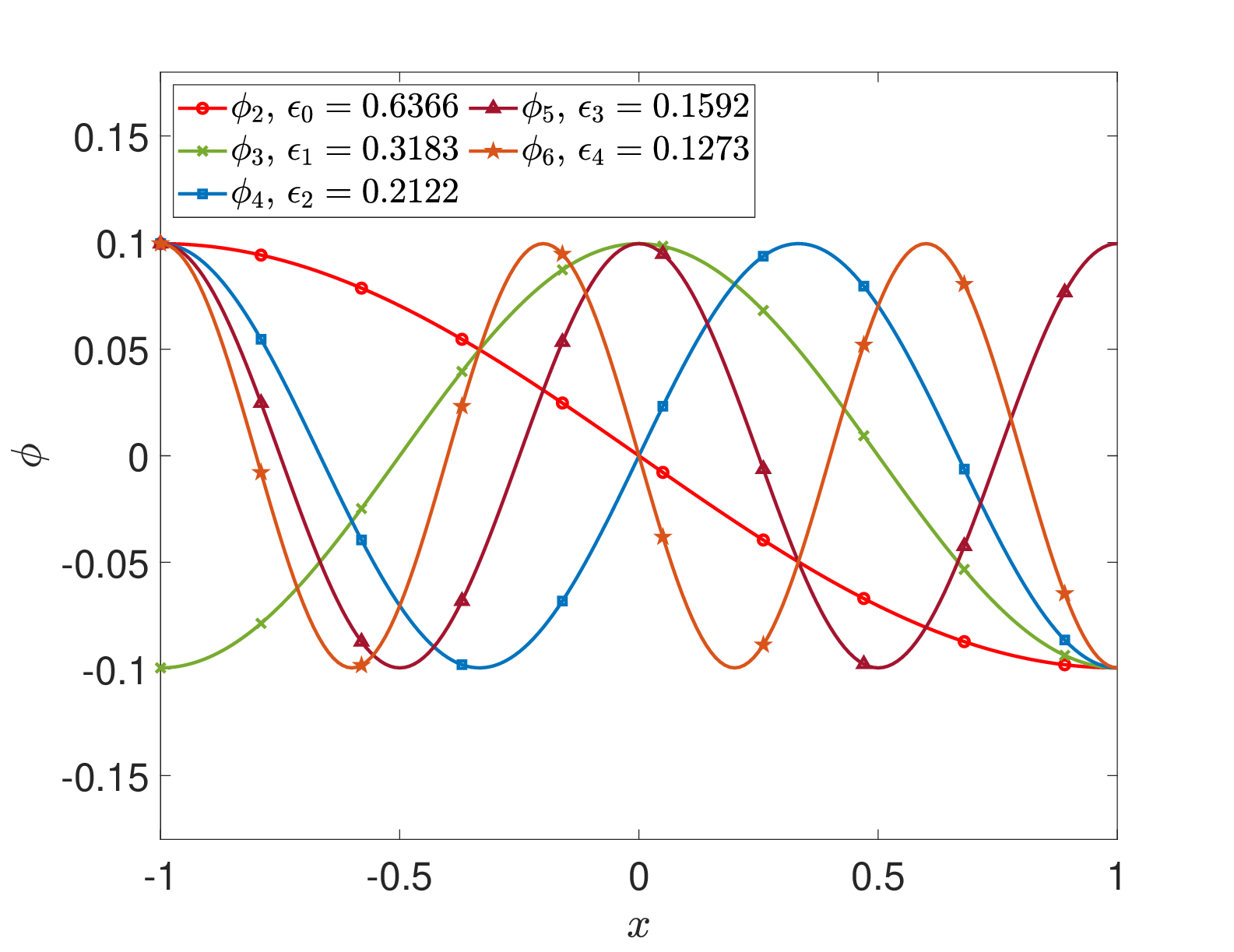} \\
        (a) & (b) \\
    \includegraphics[width=0.45\linewidth]{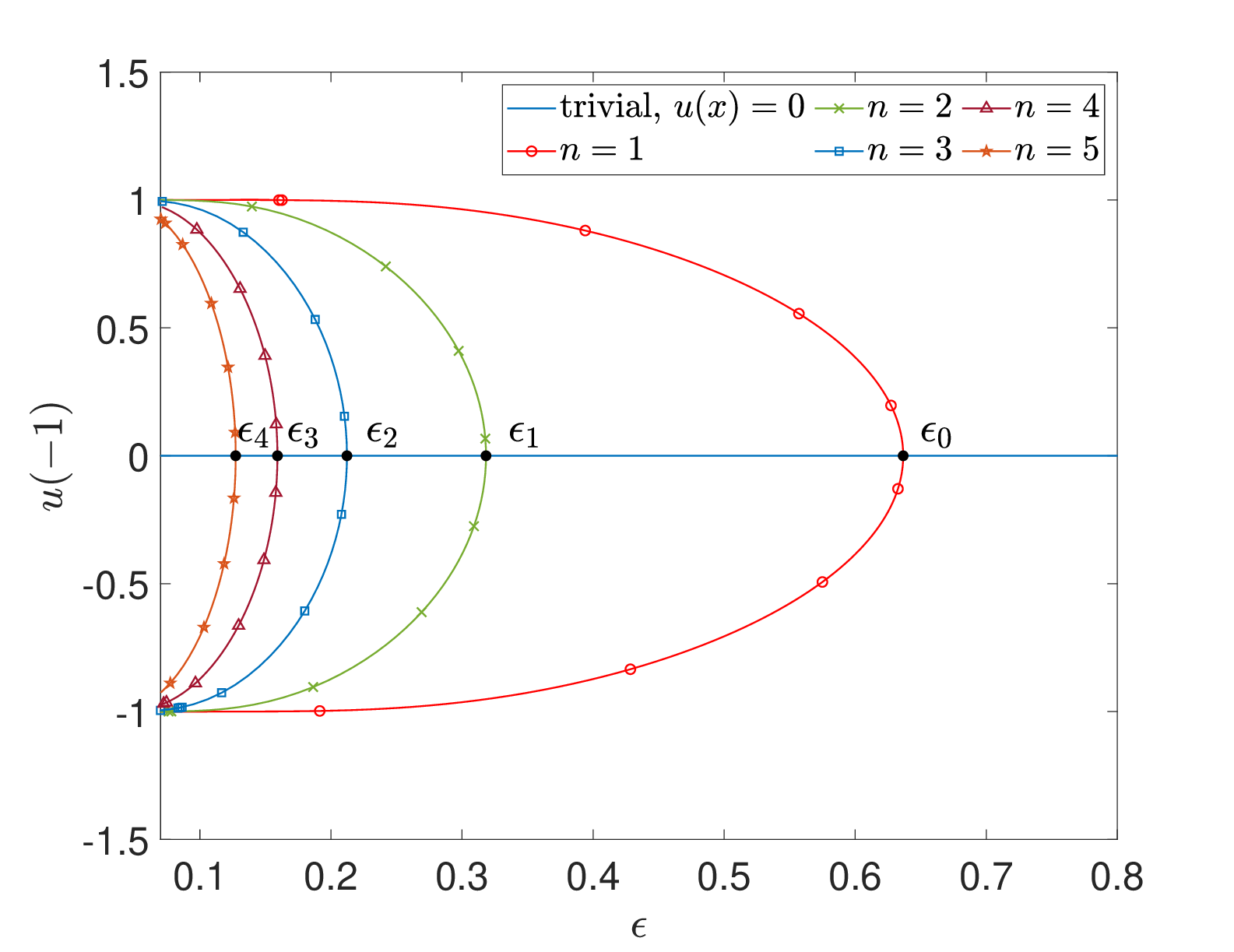}   &  \includegraphics[width=0.45\linewidth]{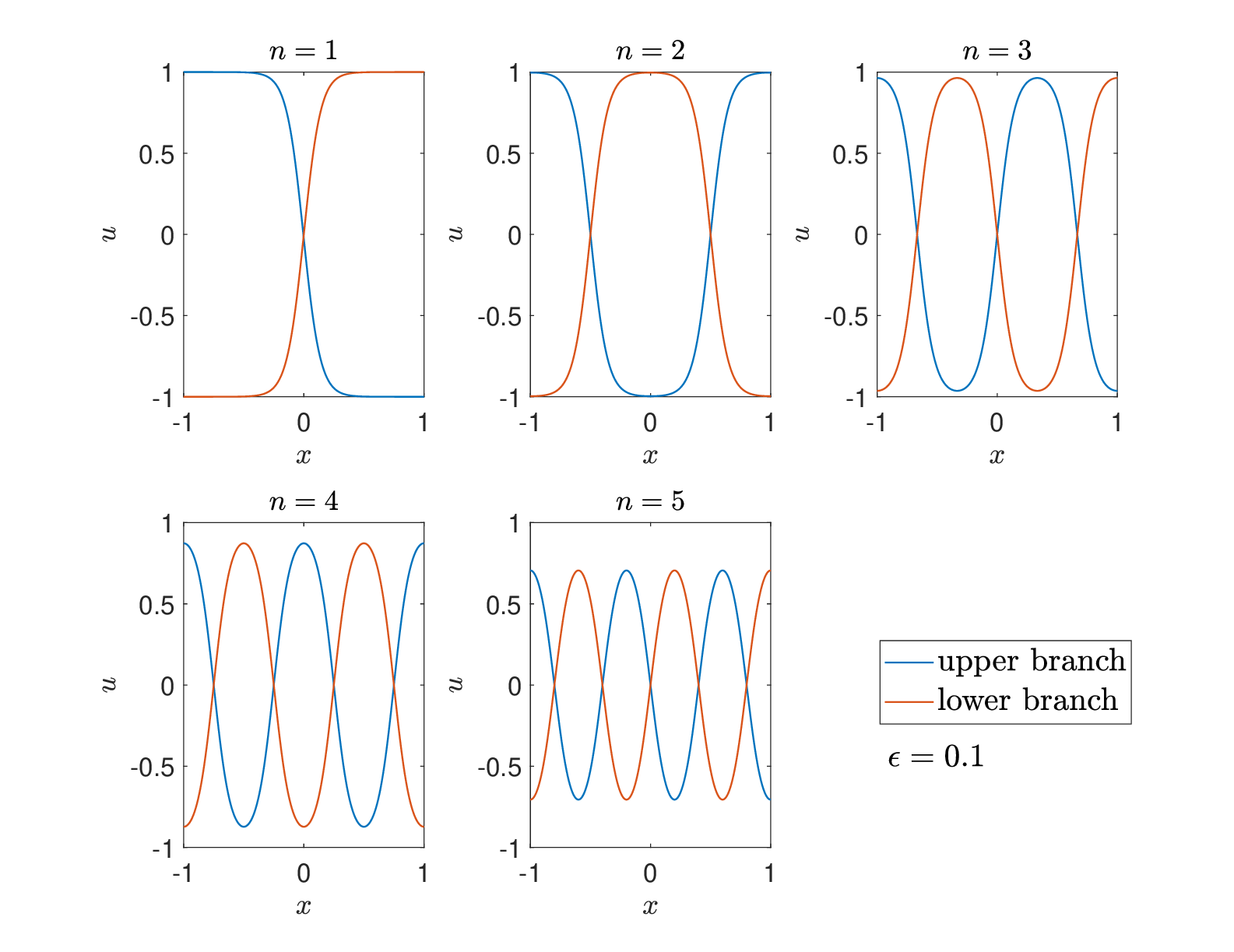} \\
        (c) & (d)    
    \end{tabular}
    \caption{\small Numerical results for the Allen-Cahn PDE in Eq.~\eqref{eq:AC_PDE}. (a) Dependence of the six dominant eigenvalues of the Jacobian matrix along the trivial solution $u(x)=0$ on the parameter $\varepsilon$.
    The points at which $\lambda_2,\lambda_3,...,\lambda_6$ change sign indicate the bifurcation points $\varepsilon_0, \varepsilon_1,...,\varepsilon_4$.
    %
    %
    (b) Eigenvectors corresponding to dominant eigenvalues that change sign at bifurcation points.
    %
    %
    (c) Bifurcation diagram of the 1D steady-state Allen-Cahn equation, illustrating branches $n=1,2,..,5$ that emanate from the corresponding bifurcation points. 
    (d) Ten non-trivial solutions of the 1D Allen-Cahn equation for $\varepsilon=0.1$. 
    The blue and red curves correspond to solutions of the upper and lower parts, respectively, of branches $n=1,2,3,4$ and $5$ (shown in panel (c)). 
    }
\label{fig:AC_bifurcation}
\end{figure}
The Allen-Cahn equation is given by
\begin{equation}
\frac{\partial u}{\partial t}(x, t) = \varepsilon \Delta u(x, t) - \frac{1}{\varepsilon} W'(u(x, t)), \quad x \in \Omega, \, t > 0,
\label{eq:AC_PDE}
\end{equation}
with \(\Omega = [-1, 1]\) and \(0 < \varepsilon \ll 1\) is a parameter controlling the interface width. The phase field variable \(u(x, t)\) represents two distinct phases (a value of $+1$ representing one phase, and $-1$ representing the other). 
The function \(W(u) = \frac{1}{4} \left(u^2 - 1\right)^2\) is a double-well potential forcing \(u\) to assume values close to \(\pm 1\) to clearly differentiate the phases and to make the boundaries between them narrow. The Allen-Cahn PDE arises naturally as the \(L^2\)-gradient flow of the Ginzburg-Landau free energy functional,  
\begin{equation}
E(u) = \int_{-1}^1 \left( \frac{\varepsilon}{2} u_x^2 + \frac{1}{4\varepsilon} \left(u^2 - 1\right)^2 \right) dx,
\end{equation}  
and the associated steady-state profile $u(x)$ is also a solution of the stationary Euler-Lagrange equation
\begin{equation} \label{eq:ac_ss}
-\varepsilon u_{xx} + \frac{1}{\varepsilon} \left(u^3 - u\right) = 0, \quad -1 < x < 1,
\end{equation}  
with Neumann boundary conditions: \(u_x(-1) = u_x(1) = 0\). 

The bifurcation diagram of the 1D Allen-Cahn PDE has been extensively studied in~\cite{zhao2024bifurcation}. In particular, it has been shown, that the bifurcations from the trivial steady state can be explained by Crandall-Rabinowitz theorem~\cite{crandall1971bifurcation}. For the completeness of the presentation, we summarize these results here. It is straightforward to verify that \(u(x) = -1, 0, 1\) are three trivial solutions of Eq.~\eqref{eq:ac_ss}. Beyond these, there are a number of bifurcation points along $u(x)=0$ at specific values of $\varepsilon$.
\begin{itemize}
    \item For each integer \(n \geq 0\), the parameter values \(\varepsilon^{(1)}_n = \frac{1}{\pi/2 + n\pi}\) are bifurcation points. Local solutions $ (u_n(x, s), \varepsilon_n(s)) $ around these points are well approximated by 
    \begin{equation} \label{eq:sin_bf}
    \begin{split}
    \varepsilon_n(s) = \varepsilon^{(1)}_n + s, \quad \quad |s| \leq 1, \\
    u_n(x, s) = s \sin\left(\frac{\pi}{2} + n\pi x\right) + O(s^2).
    \end{split}
    \end{equation}
    \item For each integer \(n \geq 1\), the parameter values \(\varepsilon^{(2)}_n = \frac{1}{n\pi}\) are bifurcation points. Local solutions $ (u_n(x, s), \varepsilon_n(s)) $ around these points are well approximated by
\begin{equation} \label{eq:cos_bf}
\begin{split}
\varepsilon_n(s) = \varepsilon^{(2)}_n + s, \quad |s| \leq 1\\
u_n(x, s) = s \cos(n\pi x) + O(s^2);   
\end{split}
\end{equation}
\end{itemize}
where $s$ is the arc-length parameterization.\par

Analogously to the Liouville-Bratu-Gelfand formulation, the PI--RPNN approximation for the Allen--Cahn equation converts the PDE into a nonlinear algebraic system obtained by enforcing the residual at the chosen collocation points.

\subsubsection{Allen-Cahn Results}
The one-dimensional domain $\Omega = [-1,1]$ is discretized with $201$ equidistant collocation points. 
Our PI-RPNN approximation uses $N=200$ neurons, and we solve the generalized eigenvalue problem (\ref{eq:gen_eigRPNN}) to locate the bifurcation points along $u(x)=0$.
The largest dominant eigenvalue is positive for the examined $\varepsilon$ value in $[0,0.8]$.
The second largest real eigenvalue changes sign at $\varepsilon_0 \approx 0.6366$, the third at $\varepsilon_1 \approx 0.3183$, the fourth at $\varepsilon_2 \approx 0.2122$, the fifth at $\varepsilon_3=0.1592$, and the sixth at $\varepsilon_4=0.1273$ (see Fig.~\ref{fig:AC_bifurcation}(a)).
The corresponding analytic values from (\ref{eq:sin_bf})-(\ref{eq:cos_bf}) match perfectly with the numerically obtained ones: $0.6366$, $0.3183$, $0.2122$, $0.1592$ and $0.1273$.
To switch from the trivial solution $u(x)=0$ branch, we perturb along the eigendirection corresponding to the dominant eigenvalue changing sign at each bifurcation point. 
We depict these eigenfunctions in Fig.~\ref{fig:AC_bifurcation}(b).
The resulting bifurcation diagram for the 1D Allen-Cahn equation is shown in Fig.~\ref{fig:AC_bifurcation}(c). 
Finally, Fig.~\ref{fig:AC_bifurcation}(d) shows non-trivial solutions for $\varepsilon=0.1$.

\section{Conclusion}
\label{sec:conclusion}
Building upon previous work~\cite{fabiani2021numerical}, we demonstrated that physics-informed random projection neural networks (PI-RPNNs) can be effectively employed for the stability and bifurcation analysis of nonlinear PDEs, despite the ill-conditioning of the random collocation matrix. In that work, we proposed a machine learning-aided numerical scheme based on PI-RPNNs and collocation to approximate steady-state solutions of nonlinear PDEs. The method leverages the property of PI-RPNNs as universal function approximators, where the solution subspace is spanned by randomized hidden-layer transfer functions and only the hidden-to-output weights are trained, thus avoiding the computationally expensive and often unconvergent training of fully trainable neural networks. By coupling this approach with numerical continuation methods, we showed how bifurcation branches of nonlinear PDEs can be traced past turning points efficiently.

The key advance presented here is the use of a matrix-free shift-invert Krylov–Arnoldi method, which allows the computation of the leading eigenvalues of the physical Jacobian without explicitly inverting the ill-conditioned random collocation matrix \(\Psi\). This pseudo-inversion would otherwise propagate the numerical rank-deficiency into the physical Jacobian, introducing a dense cluster of spurious near-zero eigenvalues and contaminating the true eigenvalue spectrum (see Fig.~\ref{fig:bratu_illcond}). By solving a generalized eigenproblem for the pencil \( J_w -\lambda \tilde{B}_{\Psi}\) via a truncated SVD and targeted Arnoldi iteration, we obtain a clean, reliable set of leading eigenvalues and eigenvectors that correspond directly to the linearization of the physical PDE operator.
This addresses an aspect that was not considered in previous PI-RPNN approaches, enabling the reliable computation of eigenvalues required for stability and bifurcation analysis.

From a theoretical standpoint, we prove two complementary results: (i) the generalized eigenproblem in the PI-RPNN basis is almost surely regular, guaranteeing solvability with standard eigensolvers, and (ii) the collocation matrix exhibits exponentially decaying singular values for analytic activation functions, explaining the numerical rank deficiency and the cluster of near-zero eigenvalues, while the remaining finite eigenvalues accurately capture the physical dynamics.

We applied the method to benchmark problems, including the Liouville-Bratu-Gelfand, FitzHugh–Nagumo, and Allen–Cahn PDEs, showing that steady states, bifurcation branches, and the associated leading eigenvalues and eigenvectors can be accurately computed across parameter ranges. In particular, computing multiple top eigenvalues confirms that the method remains robust even in the presence of ill-conditioned basis collocation matrix.  

This study intentionally focuses on establishing the \emph{numerical feasibility and stability} of the proposed eigen-analysis. Aspects such as detailed comparisons with finite-element or spectral methods, rigorous convergence studies with respect to network width, and optimization of the random sampling distribution were not our primary aim; these topics have been addressed comprehensively in earlier works on PI-RPNNs for forward problems~\cite{fabiani2021numerical,calabro2021extreme}. Nevertheless, several natural limitations and extensions warrant mention. The current implementation relies on heuristic bounds for the random parameters, which, while effective for the problems considered, may require adjustment for equations with very steep gradients or widely separated scales.
A promising direction to mitigate this sensitivity is to extend the framework to fully-trained physics-informed neural networks (PINNs). Here, one could first train a standard PINN (with multiple hidden layers) and then, after training, fix the inner layers to define a \emph{trained} feature basis. The subsequent stability analysis could then be performed in the reduced weight space of the final readout layer, constructing an analogous Jacobian in this adapted basis. Such an approach would blend the efficiency and theoretical grounding of the randomized projection with the adaptive, problem-specific features learned by gradient-based training, potentially bridging the gap between fixed randomized approaches and fully-trainable PINNs for high-fidelity stability analysis.

The computational cost is dominated by the truncated SVD of the \(M \times N\) matrix \(A_\sigma\); for extremely large-scale 3D problems, randomized SVD techniques or iterative Jacobian-free approaches could be integrated to maintain scalability. Furthermore, the method is presently designed for steady-state bifurcations; extending it to directly compute Floquet multipliers for periodic orbits or to analyze stability of time-dependent PDEs would be a valuable direction.  

In summary, this work bridges a gap between scalable, physics-informed machine learning discretizations and rigorous numerical bifurcation analysis. By providing a stable pathway to compute the true eigenvalue spectrum of the PDE Jacobian within the PI-RPNN framework, we enable a new class of data-efficient, meshless tools for exploring parameter-dependent dynamics in nonlinear systems. The combination of random projection efficiency with robust linear algebra offers a practical and reliable approach for scientists and engineers who wish to perform stability and bifurcation analysis directly from a learned neural network representation, without resorting to traditional spatial discretizations.

\section*{Data and code availability}
The data and code supporting the findings of this study will be made available upon publication or upon reasonable request.

\normalsize
\section*{Acknowledgments}
G.F., and I.G.K. acknowledge the Department of Energy (DOE) support under Grant No. DE-SC0024162; I.G.K. also acknowledges partial support by the National Science Foundation under Grants No. CPS2223987 and FDT2436738.
C.S. acknowledges partial support from Gruppo Nazionale Calcolo Scientifico-Istituto Nazionale di Alta Matematica (GNCS-INdAM).

\section*{Author contributions statement}
\noindent
\textbf{G.F.}: Conceptualization, Methodology, Software, Validation, Formal analysis, Investigation, Writing - Original Draft, Writing - Review \& Editing, Visualization.\\
\textbf{M.E.K}: Conceptualization, Methodology, Software, Validation, Formal analysis, Investigation, Writing - Original Draft, Writing - Review \& Editing, Visualization.\\
\textbf{C.S.}: Validation, Formal analysis, Resources, Writing - Review \& Editing, Supervision.\\
\textbf{I.G.K.}: Validation, Formal analysis, Resources, Writing - Review \& Editing, Supervision.

\section*{Competing interest statement}
The authors declare that there are no known competing financial interests or personal relationships that could have appeared to influence the work reported in this manuscript.

\vspace{2cm}
\textbf{\LARGE Appendix}
\appendix
\renewcommand{\thefigure}{S\arabic{figure}}
\setcounter{figure}{0}
\makeatletter
\renewcommand{\fnum@figure}{Supplementary Figure \thefigure}
\makeatother

\section{Proof of Proposition~\ref{prop:regular_pencil}}
\label{proof:prop_4}
\begin{proof}
Assume, for the sake of contradiction, that the pencil $(J_u,B)$ is singular.
Recall that the pencil $(J_u,B)$ is singular if and only if
\begin{equation}
\det(J_u - \lambda B) \equiv 0 \quad \text{for all } \lambda \in \mathbb{C},
\end{equation}
which is equivalent to the existence of a nonzero vector $\phi \in \mathbb{R}^M$ such that
\begin{equation}
J_u \phi = J_w \Psi^{\dagger} \phi = 0
\quad \text{and} \quad
B \phi = 0.
\label{eq:cond_proposition_regularity}
\end{equation}

Let $\mathcal{I}_{bc} \subset \{1,\dots,M\}$ denote the indices of boundary collocation points, and $\mathcal{I}_{int}$ the interior indices. Then $B$ satisfies
\begin{equation}
B \bm{e}_i =
\begin{cases}
0, & i \in \mathcal{I}_{bc}, \\
\bm{e}_i, & i \in \mathcal{I}_{int},
\end{cases}
\end{equation}
where $\{\bm{e}_i\}$ are the standard/canonical basis vectors of $\mathbb{R}^M$, with the $i$--th entry equal to one and all others zero. Hence $\ker(B) = \operatorname{span}\{\bm{e}_i : i \in \mathcal{I}_{bc}\}$.

The pencil is singular if and only if there exists a nonzero $\phi \in \ker(B)$ such that $J_u \phi = 0$. Writing $\phi = \sum_{i \in \mathcal{I}_{bc}} c_i \bm{e}_i$ with coefficients $c_i$ not all zero, we obtain
\begin{equation}
J_u \phi = \sum_{i \in \mathcal{I}_{bc}} c_i \, J_u \bm{e}_i = 0,
\end{equation}
i.e., the set $\{ J_u \bm{e}_i : i \in \mathcal{I}_{bc} \}$ is linearly dependent.

Now recall $J_u = J_w \Psi^{\dagger}$. For a boundary index $i$,
\begin{equation}
J_u \bm{e}_i = J_w \, (\Psi^{\dagger} \bm{e}_i) = J_w (\Psi^{\dagger}_{:,i}),
\end{equation}
where $\Psi^{\dagger}_{:,i}$ denotes the $i$th column of $\Psi^\dagger$. Let us denote the restriction of columns of the pseudo-inverse to the boundary indices $\mathcal{I}_{bc}$ as $(\Psi^{\dagger})_{bc}$. Thus, linear dependence among $\{J_u \bm{e}_i\}$ is equivalent to the existence of coefficients $c_i$, not all zero, such that
\begin{equation}
J_w \Bigl( \sum_{i \in \mathcal{I}_{bc}} c_i \Psi^{\dagger}_{:,i} \Bigr) = J_w (\Psi^{\dagger})_{bc} \, \bm c = \bm 0,
\label{eq:cond_this}
\end{equation}
where $\bm{c} := (c_1,\dots,c_{M_{bc}})^\top \in \mathbb{R}^{M_{bc}}$ is the vector of these coefficients. 

Since all equations of Eq.~\eqref{eq:cond_this} must be zero, each of them must vanish individually; in particular, we can restrict attention to the indices in $\mathcal{I}_{bc}$. Restricting the rows of $J_w$ to the boundary indices yields the submatrix $J_w^{bc}$, giving
\begin{equation}
J_w^{bc} (\Psi^{\dagger})_{bc} \bm c = \nabla_u \mathcal{B} \, \Psi^{bc} \, (\Psi^{\dagger})_{bc} \bm c = 0,
\label{eq:thm_lin_dep}
\end{equation}
where $\Psi^{bc}$ denotes the restriction of $\Psi$ to boundary rows, and we have used the fact that $J_w^{bc} = \nabla_u \mathcal{B} \, \Psi^{bc}$. 

By assumption, $\nabla_u \mathcal{B}$ is full rank. Moreover, the boundary rows of the collocation matrix, $\Psi^{bc}$, have full row rank $M_{bc}$ almost surely when $N > M_{bc}$, the basis functions do not vanish simultaneously at any boundary point, and the collocation points and network parameters are drawn from continuous distributions \cite{huang2006extreme,fabiani2025random}.  

Writing the SVD of $\Psi$ as $\Psi = U \Sigma V^\top$, we have $\Psi^{bc} = U_{bc} \Sigma V^\top$ and $(\Psi^\dagger)_{bc} = V \Sigma^\dagger U_{bc}^\top$, where $U_{bc}$ consists of the rows of $U$ corresponding to the boundary indices. Therefore,
\begin{equation}
\Psi^{bc} (\Psi^\dagger)_{bc} = U_{bc} U_{bc}^\top.
\end{equation}

Since $\Psi^{bc}$ has full row rank $M_{bc}$ with probability 1, the rows of $U_{bc}$ are linearly independent. It follows that $U_{bc} U_{bc}^\top$ is \emph{symmetric and positive definite}. This ensures that the only solution to Eq.~\eqref{eq:thm_lin_dep} is $\bm c = \bm 0$, contradicting the assumption that $\phi \neq \bm 0$. 
This negates Eq.~\eqref{eq:cond_proposition_regularity}, proving that the pencil $(J_u,B)$ is regular almost surely.
\end{proof}

\section{Proof of Proposition~\ref{prop:Psi_decay_asymptotic}}
\label{app:proof_5}
\begin{proof}
Without loss of generality, consider the one--dimensional case
$\Omega=[-1,1]$; the multidimensional case is discussed at the end.

Define the empirical random-feature kernel
\begin{equation}
k_N(x,y) := \frac{1}{N}\sum_{j=1}^N \psi_j(x)\psi_j(y),
\qquad x,y\in[-1,1],
\end{equation}
which is symmetric and, with probability one, positive definite.
Since the internal parameters $\bm{\theta}_j$ are drawn independently,
$k_N$ is a Monte Carlo approximation of the kernel
\begin{equation}
k(x,y) := \mathbb{E}_{\bm{\theta}}\!\left[\psi(x;\bm{\theta})\psi(y;\bm{\theta})\right],
\label{eq:kernel_mean}
\end{equation}
and converges almost surely to $k$ as $N\to\infty$ by the law of large numbers.

Since the activation function $\psi$ is analytic, the resulting kernel is analytic in both variables on $[-1,1]^2$.  Symmetry and positive definiteness follow directly from its definition, so $k$ is a Mercer kernel.  
Therefore, $k$ admits the expansion:
\begin{equation}
    k(x,y)=\sum_{j=1}^{\infty} \lambda_j \phi_j(x)\phi_j(y),
\end{equation}
where $(\lambda_j,\phi_j)$ are the eigenpairs of the associated integral operator.
Analyticity guarantees that $k$ extends to an analytic function on an ellipse  
$E_R\subset\mathbb{C}^2$ with foci at $\pm1$ and semi‑axis sum $R>1$  
\cite{little1984eigenvalues}.
The associated integral operator
\begin{equation}
(\mathcal{T}_k f)(x) = \int_{-1}^1 k(x,y)f(y)\,dy
\end{equation}
is compact and Hilbert--Schmidt on $L^2([-1,1])$, and its eigenvalues $\{\lambda_j(\mathcal{T}_k)\}_{j\ge1}$ satisfy the exponential bound
\begin{equation}
  \lambda_j(\mathcal{T}_k) = O(R^{-j}),
  \label{eq:little_bound}
\end{equation}
as proven in \cite{little1984eigenvalues}.
%

Let $K_M\in\mathbb{R}^{M\times M}$ denote the kernel matrix
\(
(K_M)_{i\ell}=k(\bm{x}_i,\bm{x}_\ell)
\).
Since the kernel is analytic, the Mercer eigenfunctions $\phi_i$ are analytic and hence uniformly bounded on the compact domain $\Omega$.
Applying Theorem~3 of Braun (2006)~\cite{braun2006accurate} (which assumes uniformly bounded eigenfunctions) with truncation index $r=j$ yields,
with probability at least $1-\delta$ 
\begin{equation}
\bigl|\hat{\lambda}_j(K_M)-\lambda_j(\mathcal T_k)\bigr|
\;\le\;
C_1\,\lambda_j(\mathcal T_k)\,
\frac{j\sqrt{\log(j/\delta)}}{\sqrt{M}}
+
C_2\,\lambda_j(\mathcal T_k) + C_3\Lambda_{>j},
\label{eq:braun_bound_improved}
\end{equation}
where $C_1,C_2, C_3$ are constants independent of $j$ and $M$, and $\Lambda_{>j}=\sum_{i=j+1}^\infty\lambda_i(\mathcal T_k)$ is the tail sum of eigenvalues.
Using the exponential eigenvalue decay $\lambda_j(\mathcal T_k) = O(R^{-j})$ from Eq.~\eqref{eq:little_bound}, this implies that the tail sum $\Lambda_{>j}=O(R^{-(j+1)})$ is negligible and 
\begin{equation}
\hat{\lambda}_j(K_M)
=
\lambda_j(\mathcal T_k)\left[1 + O\!\left(\frac{j}{\sqrt{M}}\right)\right]
=
O\!\left(R^{-j}\left(1 + \frac{j}{\sqrt{M}}\right)\right).
\label{eq:eigenvalue_estimate}
\end{equation}

Let us now, consider the empirical Gram matrix
\(
G_{M,N} := \frac{1}{N}\Psi\Psi^\top \in \R^{M\times M}.
\)
By definition of the kernel $k$ in Eq.~\eqref{eq:kernel_mean}, we have $k_N\to k$, as $N\to \infty$ and thus also $\|G_{M,N}-K_M\|_2 \to 0$.
By Weyl's inequality, this implies a uniform perturbation bound on the spectrum,
\begin{equation}
\bigl|\hat\lambda_j(G_{M,N})-\hat\lambda_j(K_M)\bigr|
\le \|G_{M,N}-K_M\|_{2}\to0,
\end{equation}
this bound alone suffice to establish asymptotic convergence of the spectra, as we obtain:
\begin{equation}
    \lim_{N\to\infty} | \hat{\lambda}_i(G_{M,N})-\lambda_i(\mathcal T_k)|\le \lim_{N\to\infty} | \hat{\lambda}_i(G_{M,N})-\hat\lambda_i(K_M)|+| \hat{\lambda}_i(K_{M})-\lambda_i(\mathcal T_k)|=O(R^{-j})
\end{equation}
Since the singular values of $\Psi$ are related to the eigenvalues of $G_{M,N}$ by $\hat{\sigma}_j(\Psi)= \sqrt{N \hat \lambda_j(G_{M,N})}$,
we obtain \( \hat{\sigma}_j(\Psi)=O(R^{-j/2})\).

For finite $N$, we quantify the rate of convergence using matrix concentration.
Define the centered random matrices
\begin{equation}
Z_j = \frac{1}{N}\bigl(\psi_j(X)\psi_j(X)^\top - K_M\bigr), \qquad j=1,\dots,N,
\end{equation}
where $X$ collects the $i=1,\dots,M$ collocation points $\bm{x}_i$.
They satisfy $\mathbb{E}[Z_j] = 0$.
Since $|\psi(\bm{x})|\le1$, the $M$-dimensional vector $\psi_j(X)$, satisfy $\|\psi_j(X)\|_2 \leq \sqrt M$, and equivalently $\|\psi_j(X)\|_2^2 \leq M$. By definition of $K_M$, for the same reason, we also have $\|K_M\|_2\leq M$. Now since both $\psi_j(X)\psi_j(X)^{\top}$ and $K_M$ are positive semidefinite, we have that:
\begin{equation}
    \|\psi_j(X)\psi_j(X)^{\top}-K_M\|_2\leq \max \{\|\psi_j(X)\|_2^2,\|K_M\|_2 \}=M.
\end{equation}
Therefore,
\begin{equation}
\|Z_j\| \leq \frac{M}{N},
\end{equation}
and consequently $Z_j^2 \preceq \frac{M^2}{N^2} I$.
Applying Theorem~1.3 of Tropp (2012)~\cite{tropp2012user} (matrix Hoeffding) with $A_j = \frac{2M}{N}I$ yields
\begin{equation}
\mathbb{P}\Bigl[\, \bigl\|G_{M,N}-K_M\bigr\| \geq \varepsilon \Bigr]
\leq M \cdot \exp\!\left(-\frac{N\varepsilon^2}{8M^2}\right).
\end{equation}
Setting the right-hand side equal to $\delta$ and solving for $\varepsilon$ gives
\begin{equation}
    \varepsilon = 2M \sqrt{\frac{2\log(M/\delta)}{N}}.
\end{equation}
Thus, with probability at least \( 1-\delta \),
\begin{equation}
\|G_{M,N}-K_M\|_2 \le 2M \sqrt{\frac{2\log(M/\delta)}{N}}.
\label{eq:opnorm_bound}
\end{equation}

By Weyl's inequality, for every $j=1,\dots,M$,
\begin{equation}
\lambda_j(G_{M,N}) \le \lambda_j(K_M) + 2M \sqrt{\frac{2\log(M/\delta)}{N}}.
\end{equation}
Using $\lambda_j(K_M) = O(R^{-j})$ and defining $R_2$ with $1<R_2 \le R$, we have $\lambda_j(K_M) \le D R_2^{-j}$ for some constant $D$.
Hence,
\begin{equation}
\lambda_j(G_{M,N}) \le D R_2^{-j} + 2M \sqrt{\frac{2\log(M/\delta)}{N}}.
\end{equation}
For the bound to be dominated by the $R_2^{-j}$ term, we require
\begin{equation}
2M \sqrt{\frac{2\log(M/\delta)}{N}} \le C R_2^{-j},
\label{eq:decay_condition}
\end{equation}
for a constant $C>0$.  Taking logarithms gives
\begin{equation}
\log 2 + \log M + \frac12\log\bigl(2\log(M/\delta)\bigr) - \frac12\log N
\le \log C - j\log R_2.
\end{equation}
Rearranging,
\begin{equation}
j \log R_2 \le \log C - \log 2 - \log M 
- \frac12\log\bigl(2\log(M/\delta)\bigr) + \frac12\log N.
\end{equation}
Ignoring the constants and the logarithmic factor $\log\log(M/\delta)$, we obtain the leading-order estimate
\begin{equation}
    j \lesssim \frac{\frac12\log N - \log M}{\log R_2}.
\end{equation}

Therefore, with probability at least $1-\delta$, the first $j=1,\dots,s$ singular values $\hat \sigma_j$, with
\begin{equation}
s \lesssim \frac{\frac12\log N - \log M}{\log R_2}
\end{equation}
satisfy $\hat\sigma_j(\Psi) = \sqrt{N\lambda_j(G_{M,N})} = O(R_2^{-j/2})$.

The extension to higher dimensions ($d\ge2$) follows from the fact that analyticity of
$\psi$ implies $K\in W_2^{\infty}$ on compact manifolds, such as the unit sphere, and Theorem~2.2 of \cite{castro2020super} guarantees super‑exponential decay of the spectrum of the associated integral operator.
\end{proof}


\footnotesize
\bibliographystyle{naturemag-doi.bst}
\bibliography{AA_references.bib}

\begin{thebibliography}{10}
\expandafter\ifx\csname url\endcsname\relax
  \def\url#1{\texttt{#1}}\fi
\expandafter\ifx\csname urlprefix\endcsname\relax\def\urlprefix{URL }\fi
\expandafter\ifx\csname doiprefix\endcsname\relax\def\doiprefix{DOI }\fi
\providecommand{\bibinfo}[2]{#2}
\providecommand{\eprint}[2][]{\url{#2}}

\bibitem{raissi2019physics}
\bibinfo{author}{Raissi, M.}, \bibinfo{author}{Perdikaris, P.} \&
  \bibinfo{author}{Karniadakis, G.~E.}
\newblock \bibinfo{journal}{\bibinfo{title}{Physics-informed neural networks: A
  deep learning framework for solving forward and inverse problems involving
  nonlinear partial differential equations}}.
\newblock {\emph{{Journal of Computational Physics}}}
  \textbf{\bibinfo{volume}{378}}, \bibinfo{pages}{686--707}
  (\bibinfo{year}{2019}).

\bibitem{karniadakis2021physics}
\bibinfo{author}{Karniadakis, G.~E.} \emph{et~al.}
\newblock \bibinfo{journal}{\bibinfo{title}{Physics-informed machine
  learning}}.
\newblock {\emph{{Nature Reviews Physics}}} \textbf{\bibinfo{volume}{3}},
  \bibinfo{pages}{422--440} (\bibinfo{year}{2021}).

\bibitem{lu2021deepxde}
\bibinfo{author}{Lu, L.}, \bibinfo{author}{Meng, X.}, \bibinfo{author}{Mao, Z.}
  \& \bibinfo{author}{Karniadakis, G.~E.}
\newblock \bibinfo{journal}{\bibinfo{title}{{DeepXDE:} a deep learning library
  for solving differential equations}}.
\newblock {\emph{{SIAM Review}}} \textbf{\bibinfo{volume}{63}},
  \bibinfo{pages}{208--228} (\bibinfo{year}{2021}).

\bibitem{meade1994numerical}
\bibinfo{author}{Meade~Jr, A.~J.} \& \bibinfo{author}{Fernandez, A.~A.}
\newblock \bibinfo{journal}{\bibinfo{title}{The numerical solution of linear
  ordinary differential equations by feedforward neural networks}}.
\newblock {\emph{{Mathematical and Computer Modelling}}}
  \textbf{\bibinfo{volume}{19}}, \bibinfo{pages}{1--25} (\bibinfo{year}{1994}).

\bibitem{dissanayake1994neural}
\bibinfo{author}{Dissanayake, M.} \& \bibinfo{author}{Phan-Thien, N.}
\newblock \bibinfo{journal}{\bibinfo{title}{Neural-network-based approximations
  for solving partial differential equations}}.
\newblock {\emph{{Communications in Numerical Methods in Engineering}}}
  \textbf{\bibinfo{volume}{10}}, \bibinfo{pages}{195--201}
  (\bibinfo{year}{1994}).

\bibitem{lagaris1998artificial}
\bibinfo{author}{Lagaris, I.~E.}, \bibinfo{author}{Likas, A.} \&
  \bibinfo{author}{Fotiadis, D.~I.}
\newblock \bibinfo{journal}{\bibinfo{title}{Artificial neural networks for
  solving ordinary and partial differential equations}}.
\newblock {\emph{{IEEE transactions on neural networks}}}
  \textbf{\bibinfo{volume}{9}}, \bibinfo{pages}{987--1000}
  (\bibinfo{year}{1998}).

\bibitem{gerstberger1997feedforward}
\bibinfo{author}{Gerstberger, R.} \& \bibinfo{author}{Rentrop, P.}
\newblock \bibinfo{journal}{\bibinfo{title}{Feedforward neural nets as
  discretization schemes for {ODEs} and {DAEs}}}.
\newblock {\emph{{Journal of Computational and Applied Mathematics}}}
  \textbf{\bibinfo{volume}{82}}, \bibinfo{pages}{117--128}
  (\bibinfo{year}{1997}).

\bibitem{gonzalez1998identification}
\bibinfo{author}{Gonz{\'a}lez-Garc{\'\i}a, R.},
  \bibinfo{author}{Rico-Mart{\`\i}nez, R.} \& \bibinfo{author}{Kevrekidis,
  I.~G.}
\newblock \bibinfo{journal}{\bibinfo{title}{Identification of distributed
  parameter systems: A neural net based approach}}.
\newblock {\emph{{Computers \& chemical engineering}}}
  \textbf{\bibinfo{volume}{22}}, \bibinfo{pages}{S965--S968}
  (\bibinfo{year}{1998}).

\bibitem{samaniego2020energy}
\bibinfo{author}{Samaniego, E.} \emph{et~al.}
\newblock \bibinfo{journal}{\bibinfo{title}{An energy approach to the solution
  of partial differential equations in computational mechanics via machine
  learning: Concepts, implementation and applications}}.
\newblock {\emph{{Computer Methods in Applied Mechanics and Engineering}}}
  \textbf{\bibinfo{volume}{362}}, \bibinfo{pages}{112790}
  (\bibinfo{year}{2020}).

\bibitem{kavousanakis2025flow}
\bibinfo{author}{Kavousanakis, M.} \emph{et~al.}
\newblock \bibinfo{journal}{\bibinfo{title}{Going with the flow: Solving for
  symmetry-driven pde dynamics with physics-informed neural networks}}.
\newblock {\emph{{arXiv preprint arXiv:2509.15963}}}  (\bibinfo{year}{2025}).

\bibitem{zou2025learning}
\bibinfo{author}{Zou, Z.}, \bibinfo{author}{Wang, Z.} \&
  \bibinfo{author}{Karniadakis, G.~E.}
\newblock \bibinfo{journal}{\bibinfo{title}{Learning and discovering multiple
  solutions using physics-informed neural networks with random initialization
  and deep ensemble}}.
\newblock {\emph{{Proceedings of the Royal Society A}}}
  \textbf{\bibinfo{volume}{481}}, \bibinfo{pages}{20250205}
  (\bibinfo{year}{2025}).

\bibitem{kiyani2025optimizer}
\bibinfo{author}{Kiyani, E.}, \bibinfo{author}{Shukla, K.},
  \bibinfo{author}{Urb{\'a}n, J.~F.}, \bibinfo{author}{Darbon, J.} \&
  \bibinfo{author}{Karniadakis, G.~E.}
\newblock \bibinfo{journal}{\bibinfo{title}{Which optimizer works best for
  physics-informed neural networks and kolmogorov-arnold networks?}}
\newblock {\emph{{arXiv preprint arXiv:2501.16371}}}  (\bibinfo{year}{2025}).

\bibitem{wang2025turbulence}
\bibinfo{author}{Wang, S.}, \bibinfo{author}{Sankaran, S.},
  \bibinfo{author}{Fan, X.}, \bibinfo{author}{Stinis, P.} \&
  \bibinfo{author}{Perdikaris, P.}
\newblock \bibinfo{journal}{\bibinfo{title}{Simulating three-dimensional
  turbulence with physics-informed neural networks}}.
\newblock {\emph{{arXiv preprint arXiv:2507.08972}}}  (\bibinfo{year}{2025}).

\bibitem{wei2018machine}
\bibinfo{author}{Wei, Q.}, \bibinfo{author}{Jiang, Y.} \&
  \bibinfo{author}{Chen, J.~Z.}
\newblock \bibinfo{journal}{\bibinfo{title}{Machine-learning solver for
  modified diffusion equations}}.
\newblock {\emph{{Physical Review E}}} \textbf{\bibinfo{volume}{98}},
  \bibinfo{pages}{053304} (\bibinfo{year}{2018}).

\bibitem{han2018solving}
\bibinfo{author}{Han, J.}, \bibinfo{author}{Jentzen, A.} \& \bibinfo{author}{E,
  W.}
\newblock \bibinfo{journal}{\bibinfo{title}{Solving high-dimensional partial
  differential equations using deep learning}}.
\newblock {\emph{{Proceedings of the National Academy of Sciences}}}
  \textbf{\bibinfo{volume}{115}}, \bibinfo{pages}{8505--8510}
  (\bibinfo{year}{2018}).

\bibitem{hu2025scorepinn}
\bibinfo{author}{Hu, Z.}, \bibinfo{author}{Zhang, Z.},
  \bibinfo{author}{Karniadakis, G.~E.} \& \bibinfo{author}{Kawaguchi, K.}
\newblock \bibinfo{journal}{\bibinfo{title}{Score-based physics-informed neural
  networks for high-dimensional fokker--planck equations}}.
\newblock {\emph{{SIAM Journal on Scientific Computing}}}
  \textbf{\bibinfo{volume}{47}}, \bibinfo{pages}{C680--C705}
  (\bibinfo{year}{2025}).

\bibitem{georgiou2025heatnets}
\bibinfo{author}{Georgiou, K.}, \bibinfo{author}{Fabiani, G.},
  \bibinfo{author}{Siettos, C.} \& \bibinfo{author}{Yannacopoulos, A.~N.}
\newblock \bibinfo{journal}{\bibinfo{title}{Heatnets: Explainable random
  feature neural networks for high-dimensional parabolic pdes}}.
\newblock {\emph{{arXiv preprint arXiv:2511.00886}}}  (\bibinfo{year}{2025}).

\bibitem{deryck2025approximation}
\bibinfo{author}{De~Ryck, T.}, \bibinfo{author}{Mishra, S.},
  \bibinfo{author}{Shang, Y.} \& \bibinfo{author}{Wang, F.}
\newblock \bibinfo{journal}{\bibinfo{title}{Approximation theory and
  applications of randomized neural networks for solving high-dimensional
  pdes}}.
\newblock {\emph{{arXiv preprint arXiv:2501.12145}}}  (\bibinfo{year}{2025}).

\bibitem{alvarez2023discrete}
\bibinfo{author}{Vargas~Alvarez, H.}, \bibinfo{author}{Fabiani, G.},
  \bibinfo{author}{Kazantzis, N.}, \bibinfo{author}{Siettos, C.} \&
  \bibinfo{author}{Kevrekidis, I.~G.}
\newblock \bibinfo{journal}{\bibinfo{title}{Discrete-time nonlinear feedback
  linearization via physics-informed machine learning}}.
\newblock {\emph{{Journal of Computational Physics}}}
  \textbf{\bibinfo{volume}{492}}, \bibinfo{pages}{112408}
  (\bibinfo{year}{2023}).

\bibitem{alvarez2024nonlinear}
\bibinfo{author}{Alvarez, H.~V.}, \bibinfo{author}{Fabiani, G.},
  \bibinfo{author}{Kazantzis, N.}, \bibinfo{author}{Kevrekidis, I.~G.} \&
  \bibinfo{author}{Siettos, C.}
\newblock \bibinfo{journal}{\bibinfo{title}{Nonlinear discrete-time observers
  with physics-informed neural networks}}.
\newblock {\emph{{Chaos, Solitons \& Fractals}}}
  \textbf{\bibinfo{volume}{186}}, \bibinfo{pages}{115215}
  (\bibinfo{year}{2024}).

\bibitem{patsatzis2024slow}
\bibinfo{author}{Patsatzis, D.}, \bibinfo{author}{Fabiani, G.},
  \bibinfo{author}{Russo, L.} \& \bibinfo{author}{Siettos, C.}
\newblock \bibinfo{journal}{\bibinfo{title}{Slow invariant manifolds of
  singularly perturbed systems via physics-informed machine learning}}.
\newblock {\emph{{SIAM Journal on Scientific Computing}}}
  \textbf{\bibinfo{volume}{46}}, \bibinfo{pages}{C297--C322}
  (\bibinfo{year}{2024}).

\bibitem{kalia2021learning}
\bibinfo{author}{Kalia, M.}, \bibinfo{author}{Brunton, S.~L.},
  \bibinfo{author}{Meijer, H.~G.}, \bibinfo{author}{Brune, C.} \&
  \bibinfo{author}{Kutz, J.~N.}
\newblock \bibinfo{journal}{\bibinfo{title}{Learning normal form autoencoders
  for data-driven discovery of universal, parameter-dependent governing
  equations}}.
\newblock {\emph{{arXiv preprint arXiv:2106.05102}}}  (\bibinfo{year}{2021}).

\bibitem{bertalan2019learning}
\bibinfo{author}{Bertalan, T.}, \bibinfo{author}{Dietrich, F.},
  \bibinfo{author}{Mezi{\'c}, I.} \& \bibinfo{author}{Kevrekidis, I.~G.}
\newblock \bibinfo{journal}{\bibinfo{title}{On learning hamiltonian systems
  from data}}.
\newblock {\emph{{Chaos: An Interdisciplinary Journal of Nonlinear Science}}}
  \textbf{\bibinfo{volume}{29}}, \bibinfo{pages}{121107}
  (\bibinfo{year}{2019}).
\newblock \doiprefix 10.1063/1.5128231.

\bibitem{li2024pinnoperator}
\bibinfo{author}{Li, Z.} \emph{et~al.}
\newblock \bibinfo{journal}{\bibinfo{title}{Physics-informed neural operator
  for learning partial differential equations}}.
\newblock {\emph{{ACM/IMS Journal of Data Science}}}
  \textbf{\bibinfo{volume}{1}}, \bibinfo{pages}{1--27} (\bibinfo{year}{2024}).

\bibitem{wang2021learning}
\bibinfo{author}{Wang, S.}, \bibinfo{author}{Wang, H.} \&
  \bibinfo{author}{Perdikaris, P.}
\newblock \bibinfo{journal}{\bibinfo{title}{Learning the solution operator of
  parametric partial differential equations with physics-informed deeponets}}.
\newblock {\emph{{Science Advances}}} \textbf{\bibinfo{volume}{7}},
  \bibinfo{pages}{eabi8605} (\bibinfo{year}{2021}).
\newblock \doiprefix 10.1126/sciadv.abi8605.

\bibitem{goswami2023physics}
\bibinfo{author}{Goswami, S.}, \bibinfo{author}{Bora, A.}, \bibinfo{author}{Yu,
  Y.} \& \bibinfo{author}{Karniadakis, G.~E.}
\newblock \bibinfo{title}{Physics-informed deep neural operator networks}.
\newblock In \emph{\bibinfo{booktitle}{Machine Learning in Modeling and
  Simulation: Methods and Applications}}, \bibinfo{pages}{219--254}
  (\bibinfo{publisher}{Springer}, \bibinfo{year}{2023}).

\bibitem{fabiani2021numerical}
\bibinfo{author}{Fabiani, G.}, \bibinfo{author}{Calabr{\`o}, F.},
  \bibinfo{author}{Russo, L.} \& \bibinfo{author}{Siettos, C.}
\newblock \bibinfo{journal}{\bibinfo{title}{Numerical solution and bifurcation
  analysis of nonlinear partial differential equations with extreme learning
  machines}}.
\newblock {\emph{{Journal of Scientific Computing}}}
  \textbf{\bibinfo{volume}{89}}, \bibinfo{pages}{1--35} (\bibinfo{year}{2021}).

\bibitem{galaris2022numerical}
\bibinfo{author}{Galaris, E.}, \bibinfo{author}{Fabiani, G.},
  \bibinfo{author}{Gallos, I.}, \bibinfo{author}{Kevrekidis, I.} \&
  \bibinfo{author}{Siettos, C.}
\newblock \bibinfo{journal}{\bibinfo{title}{Numerical bifurcation analysis of
  pdes from lattice boltzmann model simulations: a parsimonious machine
  learning approach}}.
\newblock {\emph{{Journal of Scientific Computing}}}
  \textbf{\bibinfo{volume}{92}}, \bibinfo{pages}{1--30} (\bibinfo{year}{2022}).

\bibitem{shahab2025pinnlattices}
\bibinfo{author}{Shahab, M.~L.}, \bibinfo{author}{Suheri, F.~A.},
  \bibinfo{author}{Kusdiantara, R.} \& \bibinfo{author}{Susanto, H.}
\newblock \bibinfo{journal}{\bibinfo{title}{Physics-informed neural networks
  for high-dimensional solutions and snaking bifurcations in nonlinear
  lattices}}.
\newblock {\emph{{Physica D: Nonlinear Phenomena}}} \bibinfo{pages}{134836}
  (\bibinfo{year}{2025}).

\bibitem{shahab2025corrigendum}
\bibinfo{author}{Shahab, M.~L.} \& \bibinfo{author}{Susanto, H.}
\newblock \bibinfo{journal}{\bibinfo{title}{Corrigendum to “neural networks
  for bifurcation and linear stability analysis of steady states in partial
  differential equations” [appl. math. comput. 483 (2024) 128985]}}.
\newblock {\emph{{Applied Mathematics and Computation}}}
  \textbf{\bibinfo{volume}{495}}, \bibinfo{pages}{129319}
  (\bibinfo{year}{2025}).

\bibitem{liu2023adaptive}
\bibinfo{author}{Liu, Y.}, \bibinfo{author}{Liu, W.}, \bibinfo{author}{Yan,
  X.}, \bibinfo{author}{Guo, S.} \& \bibinfo{author}{Zhang, C.-a.}
\newblock \bibinfo{journal}{\bibinfo{title}{Adaptive transfer learning for
  pinn}}.
\newblock {\emph{{Journal of Computational Physics}}}
  \textbf{\bibinfo{volume}{490}}, \bibinfo{pages}{112291}
  (\bibinfo{year}{2023}).

\bibitem{blum1988training}
\bibinfo{author}{Blum, A.} \& \bibinfo{author}{Rivest, R.}
\newblock \bibinfo{journal}{\bibinfo{title}{Training a 3-node neural network is
  np-complete}}.
\newblock {\emph{{Advances in neural information processing systems}}}
  \textbf{\bibinfo{volume}{1}} (\bibinfo{year}{1988}).

\bibitem{froese2023training}
\bibinfo{author}{Froese, V.} \& \bibinfo{author}{Hertrich, C.}
\newblock \bibinfo{journal}{\bibinfo{title}{Training neural networks is np-hard
  in fixed dimension}}.
\newblock {\emph{{Advances in Neural Information Processing Systems}}}
  \textbf{\bibinfo{volume}{36}}, \bibinfo{pages}{44039--44049}
  (\bibinfo{year}{2023}).

\bibitem{almira2021negative}
\bibinfo{author}{Almira, J.}, \bibinfo{author}{Lopez-de Teruel, P.},
  \bibinfo{author}{Romero-Lopez, D.} \& \bibinfo{author}{Voigtlaender, F.}
\newblock \bibinfo{journal}{\bibinfo{title}{Negative results for approximation
  using single layer and multilayer feedforward neural networks}}.
\newblock {\emph{{Journal of mathematical analysis and applications}}}
  \textbf{\bibinfo{volume}{494}}, \bibinfo{pages}{124584}
  (\bibinfo{year}{2021}).

\bibitem{adcock2021gap}
\bibinfo{author}{Adcock, B.} \& \bibinfo{author}{Dexter, N.}
\newblock \bibinfo{journal}{\bibinfo{title}{The gap between theory and practice
  in function approximation with deep neural networks}}.
\newblock {\emph{{SIAM Journal on Mathematics of Data Science}}}
  \textbf{\bibinfo{volume}{3}}, \bibinfo{pages}{624--655}
  (\bibinfo{year}{2021}).

\bibitem{karumuri2024efficient}
\bibinfo{author}{Karumuri, S.}, \bibinfo{author}{Graham-Brady, L.} \&
  \bibinfo{author}{Goswami, S.}
\newblock \bibinfo{journal}{\bibinfo{title}{Efficient training of deep neural
  operator networks via randomized sampling}}.
\newblock {\emph{{arXiv preprint arXiv:2409.13280}}}  (\bibinfo{year}{2024}).

\bibitem{fabiani2025random}
\bibinfo{author}{Fabiani, G.}
\newblock \bibinfo{journal}{\bibinfo{title}{Random projection neural networks
  of best approximation: Convergence theory and practical applications}}.
\newblock {\emph{{SIAM Journal on Mathematics of Data Science}}}
  \textbf{\bibinfo{volume}{7}}, \bibinfo{pages}{385--409}
  (\bibinfo{year}{2025}).

\bibitem{fabiani2023parsimonious}
\bibinfo{author}{Fabiani, G.}, \bibinfo{author}{Galaris, E.},
  \bibinfo{author}{Russo, L.} \& \bibinfo{author}{Siettos, C.}
\newblock \bibinfo{journal}{\bibinfo{title}{Parsimonious physics-informed
  random projection neural networks for initial value problems of odes and
  index-1 daes}}.
\newblock {\emph{{Chaos: An Interdisciplinary Journal of Nonlinear Science}}}
  \textbf{\bibinfo{volume}{33}} (\bibinfo{year}{2023}).

\bibitem{schmidt1992feed}
\bibinfo{author}{Schmidt, W.~F.}, \bibinfo{author}{Kraaijveld, M.~A.},
  \bibinfo{author}{Duin, R.~P.} \emph{et~al.}
\newblock \bibinfo{title}{Feed forward neural networks with random weights}.
\newblock In \emph{\bibinfo{booktitle}{International Conference on Pattern
  Recognition}}, \bibinfo{pages}{1--1} (\bibinfo{organization}{IEEE Computer
  Society Press}, \bibinfo{year}{1992}).

\bibitem{pao1994learning}
\bibinfo{author}{Pao, Y.-H.}, \bibinfo{author}{Park, G.-H.} \&
  \bibinfo{author}{Sobajic, D.~J.}
\newblock \bibinfo{journal}{\bibinfo{title}{Learning and generalization
  characteristics of the random vector functional-link net}}.
\newblock {\emph{{Neurocomputing}}} \textbf{\bibinfo{volume}{6}},
  \bibinfo{pages}{163--180} (\bibinfo{year}{1994}).

\bibitem{jaeger2001echo}
\bibinfo{author}{Jaeger, H.}
\newblock \bibinfo{journal}{\bibinfo{title}{The “echo state” approach to
  analysing and training recurrent neural networks-with an erratum note}}.
\newblock {\emph{{Bonn, Germany: German National Research Center for
  Information Technology GMD Technical Report}}}
  \textbf{\bibinfo{volume}{148}}, \bibinfo{pages}{13} (\bibinfo{year}{2001}).

\bibitem{huang2006extreme}
\bibinfo{author}{Huang, G.-B.}, \bibinfo{author}{Zhu, Q.-Y.} \&
  \bibinfo{author}{Siew, C.-K.}
\newblock \bibinfo{journal}{\bibinfo{title}{Extreme learning machine: theory
  and applications}}.
\newblock {\emph{{Neurocomputing}}} \textbf{\bibinfo{volume}{70}},
  \bibinfo{pages}{489--501} (\bibinfo{year}{2006}).

\bibitem{rahimi2007random}
\bibinfo{author}{Rahimi, A.} \& \bibinfo{author}{Recht, B.}
\newblock \bibinfo{journal}{\bibinfo{title}{Random features for large-scale
  kernel machines}}.
\newblock {\emph{{Advances in neural information processing systems}}}
  \textbf{\bibinfo{volume}{20}} (\bibinfo{year}{2007}).

\bibitem{bolager2024sampling}
\bibinfo{author}{Bolager, E.~L.}, \bibinfo{author}{Burak, I.},
  \bibinfo{author}{Datar, C.}, \bibinfo{author}{Sun, Q.} \&
  \bibinfo{author}{Dietrich, F.}
\newblock \bibinfo{journal}{\bibinfo{title}{Sampling weights of deep neural
  networks}}.
\newblock {\emph{{Advances in Neural Information Processing Systems}}}
  \textbf{\bibinfo{volume}{36}} (\bibinfo{year}{2024}).

\bibitem{bolager2024gradient}
\bibinfo{author}{Bolager, E.~L.}, \bibinfo{author}{Cukarska, A.},
  \bibinfo{author}{Burak, I.}, \bibinfo{author}{Monfared, Z.} \&
  \bibinfo{author}{Dietrich, F.}
\newblock \bibinfo{journal}{\bibinfo{title}{Gradient-free training of recurrent
  neural networks}}.
\newblock {\emph{{arXiv preprint arXiv:2410.23467}}}  (\bibinfo{year}{2024}).

\bibitem{barron1993universal}
\bibinfo{author}{Barron, A.~R.}
\newblock \bibinfo{journal}{\bibinfo{title}{Universal approximation bounds for
  superpositions of a sigmoidal function}}.
\newblock {\emph{{IEEE Transactions on Information theory}}}
  \textbf{\bibinfo{volume}{39}}, \bibinfo{pages}{930--945}
  (\bibinfo{year}{1993}).

\bibitem{igelnik1995stochastic}
\bibinfo{author}{Igelnik, B.} \& \bibinfo{author}{Pao, Y.-H.}
\newblock \bibinfo{journal}{\bibinfo{title}{Stochastic choice of basis
  functions in adaptive function approximation and the functional-link net}}.
\newblock {\emph{{IEEE Transactions on Neural Networks}}}
  \textbf{\bibinfo{volume}{6}}, \bibinfo{pages}{1320--1329}
  (\bibinfo{year}{1995}).

\bibitem{rahimi2008uniform}
\bibinfo{author}{Rahimi, A.} \& \bibinfo{author}{Recht, B.}
\newblock \bibinfo{title}{Uniform approximation of functions with random
  bases}.
\newblock In \emph{\bibinfo{booktitle}{2008 46th annual allerton conference on
  communication, control, and computing}}, \bibinfo{pages}{555--561}
  (\bibinfo{organization}{IEEE}, \bibinfo{year}{2008}).

\bibitem{fabiani2025randonets}
\bibinfo{author}{Fabiani, G.}, \bibinfo{author}{Kevrekidis, I.~G.},
  \bibinfo{author}{Siettos, C.} \& \bibinfo{author}{Yannacopoulos, A.~N.}
\newblock \bibinfo{journal}{\bibinfo{title}{Randonets: Shallow networks with
  random projections for learning linear and nonlinear operators}}.
\newblock {\emph{{Journal of Computational Physics}}}
  \textbf{\bibinfo{volume}{520}}, \bibinfo{pages}{113433}
  (\bibinfo{year}{2025}).
\newblock \doiprefix https://doi.org/10.1016/j.jcp.2024.113433.

\bibitem{fabiani2024stability}
\bibinfo{author}{Fabiani, G.}, \bibinfo{author}{Bollt, E.},
  \bibinfo{author}{Siettos, C.} \& \bibinfo{author}{Yannacopoulos, A.~N.}
\newblock \bibinfo{journal}{\bibinfo{title}{Stability analysis of
  physics-informed neural networks for stiff linear differential equations}}.
\newblock {\emph{{arXiv preprint arXiv:2408.15393}}}  (\bibinfo{year}{2024}).

\bibitem{dwivedi2020physics}
\bibinfo{author}{Dwivedi, V.} \& \bibinfo{author}{Srinivasan, B.}
\newblock \bibinfo{journal}{\bibinfo{title}{Physics informed extreme learning
  machine {(PIELM)}--a rapid method for the numerical solution of partial
  differential equations}}.
\newblock {\emph{{Neurocomputing}}} \textbf{\bibinfo{volume}{391}},
  \bibinfo{pages}{96--118} (\bibinfo{year}{2020}).

\bibitem{calabro2021extreme}
\bibinfo{author}{Calabr{\`o}, F.}, \bibinfo{author}{Fabiani, G.} \&
  \bibinfo{author}{Siettos, C.}
\newblock \bibinfo{journal}{\bibinfo{title}{Extreme learning machine
  collocation for the numerical solution of elliptic pdes with sharp
  gradients}}.
\newblock {\emph{{Computer Methods in Applied Mechanics and Engineering}}}
  \textbf{\bibinfo{volume}{387}}, \bibinfo{pages}{114188}
  (\bibinfo{year}{2021}).

\bibitem{dong2021local}
\bibinfo{author}{Dong, S.} \& \bibinfo{author}{Li, Z.}
\newblock \bibinfo{journal}{\bibinfo{title}{Local extreme learning machines and
  domain decomposition for solving linear and nonlinear partial differential
  equations}}.
\newblock {\emph{{Computer Methods in Applied Mechanics and Engineering}}}
  \textbf{\bibinfo{volume}{387}}, \bibinfo{pages}{114129}
  (\bibinfo{year}{2021}).

\bibitem{schiassi2021extreme}
\bibinfo{author}{Schiassi, E.} \emph{et~al.}
\newblock \bibinfo{journal}{\bibinfo{title}{Extreme theory of functional
  connections: A fast physics-informed neural network method for solving
  ordinary and partial differential equations}}.
\newblock {\emph{{Neurocomputing}}} \textbf{\bibinfo{volume}{457}},
  \bibinfo{pages}{334--356} (\bibinfo{year}{2021}).

\bibitem{dong2022computing}
\bibinfo{author}{Dong, S.} \& \bibinfo{author}{Yang, J.}
\newblock \bibinfo{journal}{\bibinfo{title}{On computing the hyperparameter of
  extreme learning machines: Algorithm and application to computational pdes,
  and comparison with classical and high-order finite elements}}.
\newblock {\emph{{Journal of Computational Physics}}}
  \textbf{\bibinfo{volume}{463}}, \bibinfo{pages}{111290}
  (\bibinfo{year}{2022}).

\bibitem{yan2022framework}
\bibinfo{author}{Yan, C.~A.}, \bibinfo{author}{Vescovini, R.} \&
  \bibinfo{author}{Dozio, L.}
\newblock \bibinfo{journal}{\bibinfo{title}{A framework based on
  physics-informed neural networks and extreme learning for the analysis of
  composite structures}}.
\newblock {\emph{{Computers \& Structures}}} \textbf{\bibinfo{volume}{265}},
  \bibinfo{pages}{106761} (\bibinfo{year}{2022}).

\bibitem{sun2024local}
\bibinfo{author}{Sun, J.}, \bibinfo{author}{Dong, S.} \& \bibinfo{author}{Wang,
  F.}
\newblock \bibinfo{journal}{\bibinfo{title}{Local randomized neural networks
  with discontinuous galerkin methods for partial differential equations}}.
\newblock {\emph{{Journal of Computational and Applied Mathematics}}}
  \textbf{\bibinfo{volume}{445}}, \bibinfo{pages}{115830}
  (\bibinfo{year}{2024}).

\bibitem{osorio2025physics}
\bibinfo{author}{Osorio, J.~D.}, \bibinfo{author}{De~Florio, M.},
  \bibinfo{author}{Hovsapian, R.}, \bibinfo{author}{Chryssostomidis, C.} \&
  \bibinfo{author}{Karniadakis, G.~E.}
\newblock \bibinfo{journal}{\bibinfo{title}{Physics-informed machine learning
  for solar-thermal power systems}}.
\newblock {\emph{{Energy Conversion and Management}}}
  \textbf{\bibinfo{volume}{327}}, \bibinfo{pages}{119542}
  (\bibinfo{year}{2025}).

\bibitem{li2025fourier}
\bibinfo{author}{Li, X.} \emph{et~al.}
\newblock \bibinfo{journal}{\bibinfo{title}{Fourier-feature induced physics
  informed randomized neural network method to solve the biharmonic equation}}.
\newblock {\emph{{Journal of Computational and Applied Mathematics}}}
  \textbf{\bibinfo{volume}{468}}, \bibinfo{pages}{116635}
  (\bibinfo{year}{2025}).

\bibitem{fabiani2024task}
\bibinfo{author}{Fabiani, G.} \emph{et~al.}
\newblock \bibinfo{journal}{\bibinfo{title}{Task-oriented machine learning
  assisted surrogates for tipping points of agent-based models}}.
\newblock {\emph{{Nature Communications volume}}}
  \textbf{\bibinfo{volume}{15}}, \bibinfo{pages}{1--13} (\bibinfo{year}{2024}).
\newblock \doiprefix https://doi.org/10.1038/s41467-024-48024-7.

\bibitem{ahmadi2024ai}
\bibinfo{author}{Ahmadi~Daryakenari, N.}, \bibinfo{author}{De~Florio, M.},
  \bibinfo{author}{Shukla, K.} \& \bibinfo{author}{Karniadakis, G.~E.}
\newblock \bibinfo{journal}{\bibinfo{title}{Ai-aristotle: A physics-informed
  framework for systems biology gray-box identification}}.
\newblock {\emph{{PLOS Computational Biology}}} \textbf{\bibinfo{volume}{20}},
  \bibinfo{pages}{e1011916} (\bibinfo{year}{2024}).

\bibitem{fabiani2025enabling}
\bibinfo{author}{Fabiani, G.}, \bibinfo{author}{Vandecasteele, H.},
  \bibinfo{author}{Goswami, S.}, \bibinfo{author}{Siettos, C.} \&
  \bibinfo{author}{Kevrekidis, I.~G.}
\newblock \bibinfo{journal}{\bibinfo{title}{Enabling local neural operators to
  perform equation-free system-level analysis}}.
\newblock {\emph{{arXiv preprint arXiv:2505.02308}}}  (\bibinfo{year}{2025}).

\bibitem{fabiani2025equation}
\bibinfo{author}{Fabiani, G.}, \bibinfo{author}{Siettos, C.} \&
  \bibinfo{author}{Kevrekidis, I.~G.}
\newblock \bibinfo{journal}{\bibinfo{title}{Equation-free coarse control of
  distributed parameter systems via local neural operators}}.
\newblock {\emph{{arXiv preprint arXiv:2509.23975}}}  (\bibinfo{year}{2025}).

\bibitem{mishra2026eig}
\bibinfo{author}{Mishra, R.}, \bibinfo{author}{Krishnamurthi, G.},
  \bibinfo{author}{Srinivasan, B.}, \bibinfo{author}{Natarajan, S.}
  \emph{et~al.}
\newblock \bibinfo{journal}{\bibinfo{title}{Eig-pielm: an efficient mesh-free
  method for eigenvalue problems using physics-informed extreme learning
  machines}}.
\newblock {\emph{{Computer Methods in Applied Mechanics and Engineering}}}
  \textbf{\bibinfo{volume}{451}}, \bibinfo{pages}{118674}
  (\bibinfo{year}{2026}).

\bibitem{ito1996nonlinearity}
\bibinfo{author}{Ito, Y.}
\newblock \bibinfo{journal}{\bibinfo{title}{Nonlinearity creates linear
  independence}}.
\newblock {\emph{{Advances in Computational Mathematics}}}
  \textbf{\bibinfo{volume}{5}}, \bibinfo{pages}{189--203}
  (\bibinfo{year}{1996}).

\bibitem{johnson1984extensions}
\bibinfo{author}{Johnson, W.~B.}
\newblock \bibinfo{journal}{\bibinfo{title}{Extensions of lipschitz mappings
  into a hilbert space}}.
\newblock {\emph{{Contemp. Math.}}} \textbf{\bibinfo{volume}{26}},
  \bibinfo{pages}{189--206} (\bibinfo{year}{1984}).

\bibitem{chan1982arclength}
\bibinfo{author}{Chan, T. F.~C.} \& \bibinfo{author}{Keller, H.~B.}
\newblock \bibinfo{journal}{\bibinfo{title}{Arc-length continuation and
  multigrid techniques for nonlinear elliptic eigenvalue problems}}.
\newblock {\emph{{SIAM Journal on Scientific and Statistical Computing}}}
  \textbf{\bibinfo{volume}{3}}, \bibinfo{pages}{173--194}
  (\bibinfo{year}{1982}).
\newblock \doiprefix 10.1137/0903012.

\bibitem{little1984eigenvalues}
\bibinfo{author}{Little, G.} \& \bibinfo{author}{Reade, J.}
\newblock \bibinfo{journal}{\bibinfo{title}{Eigenvalues of analytic kernels}}.
\newblock {\emph{{SIAM journal on mathematical analysis}}}
  \textbf{\bibinfo{volume}{15}}, \bibinfo{pages}{133--136}
  (\bibinfo{year}{1984}).

\bibitem{castro2020super}
\bibinfo{author}{Castro, M.~H.}, \bibinfo{author}{Jordao, T.} \&
  \bibinfo{author}{Peron, A.~P.}
\newblock \bibinfo{journal}{\bibinfo{title}{Super-exponential decay rates for
  eigenvalues and singular values of integral operators on the sphere}}.
\newblock {\emph{{Journal of Computational and Applied Mathematics}}}
  \textbf{\bibinfo{volume}{364}}, \bibinfo{pages}{112334}
  (\bibinfo{year}{2020}).

\bibitem{braun2006accurate}
\bibinfo{author}{Braun, M.~L.}
\newblock \bibinfo{journal}{\bibinfo{title}{Accurate error bounds for the
  eigenvalues of the kernel matrix}}.
\newblock {\emph{{The Journal of Machine Learning Research}}}
  \textbf{\bibinfo{volume}{7}}, \bibinfo{pages}{2303--2328}
  (\bibinfo{year}{2006}).

\bibitem{tropp2012user}
\bibinfo{author}{Tropp, J.~A.}
\newblock \bibinfo{journal}{\bibinfo{title}{User-friendly tail bounds for sums
  of random matrices}}.
\newblock {\emph{{Foundations of computational mathematics}}}
  \textbf{\bibinfo{volume}{12}}, \bibinfo{pages}{389--434}
  (\bibinfo{year}{2012}).

\bibitem{saad2011numerical}
\bibinfo{author}{Saad, Y.}
\newblock \emph{\bibinfo{title}{Numerical methods for large eigenvalue
  problems: revised edition}} (\bibinfo{publisher}{SIAM},
  \bibinfo{year}{2011}).

\bibitem{ruhe1984rational}
\bibinfo{author}{Ruhe, A.}
\newblock \bibinfo{journal}{\bibinfo{title}{Rational krylov sequence methods
  for eigenvalue computation}}.
\newblock {\emph{{Linear Algebra and its Applications}}}
  \textbf{\bibinfo{volume}{58}}, \bibinfo{pages}{391--405}
  (\bibinfo{year}{1984}).

\bibitem{scott1982inverted}
\bibinfo{author}{Scott, D.~S.}
\newblock \bibinfo{journal}{\bibinfo{title}{The advantages of inverted
  operators in rayleigh--ritz approximations}}.
\newblock {\emph{{SIAM Journal on Scientific and Statistical Computing}}}
  \textbf{\bibinfo{volume}{3}}, \bibinfo{pages}{68--75} (\bibinfo{year}{1982}).

\bibitem{ericsson1980spectral}
\bibinfo{author}{Ericsson, T.} \& \bibinfo{author}{Ruhe, A.}
\newblock \bibinfo{journal}{\bibinfo{title}{The spectral transformation lanczos
  method for the numerical solution of large sparse generalized symmetric
  eigenvalue problems}}.
\newblock {\emph{{Mathematics of Computation}}} \textbf{\bibinfo{volume}{35}},
  \bibinfo{pages}{1251--1268} (\bibinfo{year}{1980}).

\bibitem{allen1972ground}
\bibinfo{author}{Allen, S.~M.} \& \bibinfo{author}{Cahn, J.~W.}
\newblock \bibinfo{journal}{\bibinfo{title}{Ground state structures in ordered
  binary alloys with second neighbor interactions}}.
\newblock {\emph{{Acta Metallurgica}}} \textbf{\bibinfo{volume}{20}},
  \bibinfo{pages}{423--433} (\bibinfo{year}{1972}).

\bibitem{boyd1986analytical}
\bibinfo{author}{Boyd, J.~P.}
\newblock \bibinfo{journal}{\bibinfo{title}{An analytical and numerical study
  of the two-dimensional bratu equation}}.
\newblock {\emph{{Journal of Scientific Computing}}}
  \textbf{\bibinfo{volume}{1}}, \bibinfo{pages}{183--206}
  (\bibinfo{year}{1986}).

\bibitem{mohsen2014simple}
\bibinfo{author}{Mohsen, A.}
\newblock \bibinfo{journal}{\bibinfo{title}{A simple solution of the bratu
  problem}}.
\newblock {\emph{{Computers \& Mathematics with Applications}}}
  \textbf{\bibinfo{volume}{67}}, \bibinfo{pages}{26--33}
  (\bibinfo{year}{2014}).

\bibitem{hajipour2018accurate}
\bibinfo{author}{Hajipour, M.}, \bibinfo{author}{Jajarmi, A.} \&
  \bibinfo{author}{Baleanu, D.}
\newblock \bibinfo{journal}{\bibinfo{title}{On the accurate discretization of a
  highly nonlinear boundary value problem}}.
\newblock {\emph{{Numerical Algorithms}}} \textbf{\bibinfo{volume}{79}},
  \bibinfo{pages}{679--695} (\bibinfo{year}{2018}).

\bibitem{fitzhugh1961impulses}
\bibinfo{author}{FitzHugh, R.}
\newblock \bibinfo{journal}{\bibinfo{title}{Impulses and physiological states
  in theoretical models of nerve membrane}}.
\newblock {\emph{{Biophysical journal}}} \textbf{\bibinfo{volume}{1}},
  \bibinfo{pages}{445--466} (\bibinfo{year}{1961}).

\bibitem{zhao2024bifurcation}
\bibinfo{author}{Zhao, X.~E.}, \bibinfo{author}{Chen, L.-Q.},
  \bibinfo{author}{Hao, W.} \& \bibinfo{author}{Zhao, Y.}
\newblock \bibinfo{journal}{\bibinfo{title}{Bifurcation analysis reveals
  solution structures of phase field models}}.
\newblock {\emph{{Communications on Applied Mathematics and Computation}}}
  \textbf{\bibinfo{volume}{6}}, \bibinfo{pages}{64--89} (\bibinfo{year}{2024}).

\bibitem{crandall1971bifurcation}
\bibinfo{author}{Crandall, M.~G.} \& \bibinfo{author}{Rabinowitz, P.~H.}
\newblock \bibinfo{journal}{\bibinfo{title}{Bifurcation from simple
  eigenvalues}}.
\newblock {\emph{{Journal of Functional Analysis}}}
  \textbf{\bibinfo{volume}{8}}, \bibinfo{pages}{321--340}
  (\bibinfo{year}{1971}).

\end{thebibliography}

\end{document}